\newcommand{\C}{\mathrm{C}}
\newcommand{\B}{\mathrm{B}} 
\let\H\relax 
\newcommand{\H}{\mathrm{H}}
\newcommand{\I}{\mathrm{I}} 
\newcommand{\K}{\mathrm{K}}
\let\L\relax 
\newcommand{\L}{\mathrm{L}}
\newcommand{\scr}{\mathscr}
\newcommand{\even}{\mathrm{even}} 
\newcommand{\odd}{\mathrm{odd}} 
\newcommand{\dR}{\mathrm{dR}} 
\newcommand{\M}{\mathrm{M}} 
\newcommand{\bi}{\mathrm{bi}} 
\newcommand{\dist}{\mathrm{dist}} 
\newcommand{\app}{\mathrm{app}}
\newcommand{\V}{\mathrm{V}}
\let\div\relax
\newcommand{\div}{\mathrm{div}} 
\let\cal\relax
\newcommand{\cal}{\mathcal}
\newcommand{\Z}{\ensuremath{\mathbb{Z}}}
\newcommand{\R}{\ensuremath{\mathbb{R}}}
\newcommand{\W}{\mathrm{W}}
\newcommand{\Id}{\mathrm{Id}} 
\newcommand{\la}{\langle}  
\newcommand{\ra}{\rangle} 
\newcommand{\HdR}{\mathrm{HdR}} 
\newcommand{\Hom}{\mathrm{Hom}} 
\renewcommand{\leq}{\ensuremath{\leqslant}}
\renewcommand{\geq}{\ensuremath{\geqslant}}
\newcommand{\qed}{\hfill \vrule height6pt  width6pt depth0pt}
\newcommand{\bnorm}[1]{ \big\| #1  \big\|}
\newcommand{\norm}[1]{\left\Vert#1\right\Vert}
\newcommand{\xra}{\xrightarrow}
\newcommand{\co}{\colon}
\newcommand{\ot}{\otimes}
\newcommand{\ovl}{\overline}
\newcommand{\sign}{\mathrm{sign}} 
\newcommand{\Fred}{\mathrm{Fred}}
\newcommand{\vol}{\mathrm{vol}} 
\let\i\relax 
\newcommand{\i}{\mathrm{i}} 
\newcommand{\ov}{\overset}
\newcommand{\epsi}{\varepsilon}
\renewcommand{\d}{\mathop{}\mathopen{}\mathrm{d}} 
\newcommand{\e}{\mathrm{e}} 
\renewcommand{\d}{\mathop{}\mathopen{}\mathrm{d}}
\let\div\relax
\newcommand{\div}{\mathrm{div}}
\DeclareMathOperator{\Lip}{\mathrm{Lip}} 
\DeclareMathOperator{\Index}{Index} 
\DeclareMathOperator{\coker}{Coker} 
\DeclareMathOperator{\sgn}{\mathrm{sgn}} 
\let\ker\relax 
\DeclareMathOperator{\ker}{Ker} 
\DeclareMathOperator{\Ran}{Ran} 
\DeclareMathOperator{\diag}{diag} 
\DeclareMathOperator{\dom}{dom} 
\DeclareMathOperator*{\esssup}{esssup}
\DeclareMathOperator{\diam}{diam} 
\DeclareMathOperator{\rank}{rank} 
\DeclareMathOperator{\grad}{grad}
\DeclareMathOperator{\Curv}{Curv} 
\newtheorem{thm}{Theorem}[section]
\newtheorem{prop}[thm]{Proposition}
\newtheorem{cor}[thm]{Corollary}
\newtheorem{lemma}[thm]{Lemma}
\newtheorem{remark}[thm]{Remark}
\newtheorem{example}[thm]{Example}
\newenvironment{proof}[1][]{\noindent {\it Proof #1} : }{\hbox{~}\qed
\smallskip
}
\numberwithin{equation}{section}
\let\OLDthebibliography\thebibliography
\renewcommand\thebibliography[1]{
  \OLDthebibliography{#1}
  \setlength{\parskip}{0pt}
  \setlength{\itemsep}{0pt plus 0.3ex}
}
\newcommand\reallywidehat[1]{\arraycolsep=0pt\relax%
\begin{array}{c}
\stretchto{
  \scaleto{
    \scalerel*[\widthof{\ensuremath{#1}}]{\kern-.5pt\bigwedge\kern-.5pt}
    {\rule[-\textheight/2]{1ex}{\textheight}} 
  }{\textheight} %
}{0.5ex}\\           
#1\\                 
\rule{-1ex}{0ex}
\end{array}
}
\begin{document}
\selectlanguage{english}
\title{\bfseries{The $\L^p$-index of the Hodge--Dirac operator on compact Riemannian manifolds}}
\date{}
\author{\bfseries{C\'edric Arhancet}}
\maketitle


\begin{abstract}
We investigate the spectral and index-theoretic properties of the Hodge--Dirac operator $D = \mathrm{d} + \mathrm{d}^*$ acting on the Banach space $\mathrm{L}^p(\Omega^\bullet(M))$ of differential forms over a compact Riemannian manifold $M$. Relying on the compactness of $M$, we establish that this operator is bisectorial and admits a bounded $\mathrm{H}^\infty$ functional calculus, without curvature assumptions. This result enables us to prove that the triple $(\mathrm{C}(M), \mathrm{L}^p(\Omega^\bullet(M)), D)$ constitutes a compact Banach spectral triple. We then investigate consistent pairings between the Banach K-homology and the K-theory of the algebra $\mathrm{C}(M)$, identifying the resulting Fredholm indices with classical topological invariants, and hence showing that they are independent of $p$. We recover the classical Euler characteristic and the Hirzebruch signature as $\mathrm{L}^p$-indices, demonstrating the effectiveness of Banach noncommutative geometry for geometric analysis, beyond the Hilbertian setting.
\end{abstract}


\makeatletter
 \renewcommand{\@makefntext}[1]{#1}
 \makeatother
 \footnotetext{
 2020 {\it Mathematics subject classification:}
 58B34, 47D03, 46L80, 47B90, 58J20 
\\
{\it Key words}: Banach noncommutative geometry, index theory, $\L^p$-Hodge theory, Riemannian manifolds, functional calculus, bisectorial operators.}

{
  \hypersetup{linkcolor=blue}
 \tableofcontents
}

\section{Introduction}
\label{sec:Introduction}

Index theory for elliptic operators on compact manifolds is a cornerstone of modern geometry, bridging the topology of a manifold and the analytic properties of elliptic operators acting on it. A famous example of this interplay is the Atiyah-Singer index theorem \cite{AtS63}. Classically, this theory is formulated within the framework of Hilbert spaces, relying heavily on the structure of $\L^2$-spaces of sections and the spectral theory of selfadjoint operators. We refer to the book \cite{BlB13} and to \cite{Fre21} for comprehensive accounts. Connes' noncommutative geometry \cite{Con94} distilled the essential ingredients of this interplay into the notion of a spectral triple $(\cal{A}, \cal{H}, D)$, where the algebra $\cal{A}$ encodes the topology of the space and the unbounded selfadjoint operator $D$ acting on the Hilbert space $\cal{H}$ carries the analytic and metric information. For an exhaustive exposition, the reader is also referred to the books \cite{GVF01}, \cite{EcI18}, \cite{Kha13}, \cite{CGRS14} and to the surveys \cite{Con00}, \cite{CoM08}, \cite{HiR06} and references therein.

However, the restriction to the Hilbertian setting $p=2$ can be viewed as an artificial limitation from the perspective of modern harmonic analysis, described in the books \cite{HvNVW16}, \cite{HvNVW18} and \cite{HvNVW23}. Many geometric phenomena and analytic properties are naturally expressed in $\L^p$-spaces for $p \neq 2$. In recent years, there has been a growing interest in extending the tools of noncommutative geometry to the Banach space setting, motivated by the Baum-Connes conjecture \cite{Laf02}, new index theorems \cite{Arh26a}, hidden noncommutative geometry \cite{ArK22,JMP18}, $\L^p$-operator algebras \cite{DFP26} and the study of semigroups of operators on $\L^p$-spaces \cite{Arh24a,Arh24b,Arh26b}.

In this paper, we investigate the spectral and index-theoretic properties of a fundamental geometric operator, the Hodge--Dirac operator $D \co \Omega^\bullet(M) \to \Omega^\bullet(M)$ defined by 
\begin{equation}
\label{def-Hodge-Dirac}
D \ov{\mathrm{def}}{=} \d+\d^*, 
\end{equation}
on the space $\Omega^\bullet(M)$ of complex smooth differential forms on a smooth compact Riemannian manifold $M$, where $\d$ is the exterior derivative. Note that its square is the Hodge-de Rham Laplacian
\begin{equation}
\label{Hodge-deRham-Laplacian}
\Delta_{\HdR} 
\ov{\mathrm{def}}{=} 
D^2 
= \d \d^* + \d^* \d.
\end{equation}
More precisely, if $1 < p < \infty$ we consider the closure $D_p$ of the operator $D$, which is a closed unbounded operator on the space $\L^p(\Omega^\bullet(M))$.

It should be possible to establish that the Fredholm index of an elliptic operator on a compact manifold is independent of $p \in (1, \infty)$, by generalizing standard arguments. 
Nevertheless, establishing the structural framework that explains this stability within noncommutative geometry is far from trivial. It requires checking that the unbounded operator generates a compact Banach spectral triple with analytic properties, specifically the bisectoriality and the boundedness of the $\H^\infty$ functional calculus of the Hodge--Dirac operator $D_p$. The latter functional calculus is a substitute for the functional calculus provided by the spectral theory of selfadjoint operators, which is available for free in the Hilbertian setting. This is no longer the case when $p \neq 2$. In the non-compact setting, the $\L^p$-theory of the Hodge--Dirac operator is subtle, for instance, van Neerven and Versendaal proved in \cite{NeV17} the boundedness of the $\H^\infty$ functional calculus under an assumption of positive curvature. We show here that in the compact setting, the situation is considerably more favorable. However, even in this case, the semigroup of operators generated by the closure $\Delta_{\HdR,p}$ of the Hodge-de Rham Laplacian $\Delta_{\HdR}$ is not necessarily contractive on the space $\L^p(\Omega^\bullet(M))$ as observed in \cite[Corollary 3.4 p.~354]{Str86}. However, it is worth noting that it is stated in \cite[Theorem 2 p.~224]{Kor91} that this operator generates a holomorphic semigroup of operators acting on $\L^p(\Omega^\bullet(M))$. Exploiting the compactness of the manifold $M$, we prove that the Hodge--Dirac operator $D_p$ is bisectorial and admits a bounded $\H^\infty$ functional calculus on the space $\L^p(\Omega^\bullet(M))$ \textit{without any curvature assumption}. 



Our strategy is inspired by \cite{NeV17}, where curvature assumptions are used to obtain the $\L^p$-boundedness of the Riesz transforms on $k$-forms and the bounded $\H^\infty$ functional calculus of the Hodge-de Rham Laplacian. In the compact setting, we bypass these curvature requirements. On the one hand, we establish the boundedness of the Riesz transforms by global pseudo-differential calculus. On the other hand, we derive Gaussian bounds for the heat kernel of the Hodge-de Rham Laplacian on $k$-forms from the corresponding estimates for the scalar Laplace-Beltrami operator $\Delta$ by a domination argument. Since the closure $\Delta_{\HdR,2}$ of the unbounded operator $\Delta_{\HdR}$ is a positive selfadjoint operator on the Hilbert space $\L^2(\Omega^\bullet(M))$, it is a sectorial operator admitting a bounded $\H^\infty$ functional calculus. Combining the case $p=2$ and the Gaussian bounds, an extrapolation argument of Duong-Robinson \cite{DuR96} (and its vector-valued extension \cite{Hal05}) gives the boundedness of the $\H^\infty$ functional calculus of the sectorial operator $\Delta_{\HdR,p}$ on the space $\L^p(\Omega^\bullet(M))$ for all $1<p<\infty$. Standard arguments as in \cite{NeV17} then yield bisectoriality of the Hodge--Dirac operator $D_p$ and the boundedness of its $\H^\infty$ functional calculus.

The main objective of this paper is to provide a rigorous construction of the Banach spectral triple $(\C(M), \L^p(\Omega^\bullet(M)), D_p)$ and to demonstrate its effectiveness by recovering classical topological invariants. More precisely, we use the Banach compact spectral triple associated with the Hodge--Dirac operator $D_p$ to define an index pairing between a Banach K-homology class and the K-theory of the algebra $\C(M)$ of continuous functions on the manifold $M$. In concrete terms, the grading by form degree yields a Fredholm operator
\[
D_{p,+} \co \W^{1,p}(\Omega^{\even}(M)) \to \L^p(\Omega^{\odd}(M)),
\]
and we prove that its index does not depend on $p$. Moreover, we identify in Corollary \ref{cor-Euler operator} this index with the Euler characteristic, which is a classical topological invariant, i.e.
\[
\Index D_{p,+} 
= \chi(M).
\]
We also obtain in Theorem \ref{thm-signature} for the signature operator 
$$
\cal{D}_{p,+} \co \W^{1,p}(\Omega^+(M)) \to \L^p(\Omega^-(M))
$$ 
a similar formula
$$
\Index \cal{D}_{p,+}
= \sign(M)
$$
on an oriented compact Riemannian manifold $M$ of dimension divisible by four.

Our approach combines techniques from spectral theory on Banach spaces with the formalism of Banach K-homology. This work demonstrates that the machinery of Banach spectral triples is not merely an abstract generalization but an effective tool capable of handling classical geometric operators with the same precision as the Hilbertian theory. If $d=\dim M$, we also show that these Banach spectral triples are $d^+$-summable, by a direct argument relying on Sobolev embeddings. 

There is a substantial literature on $\L^p$-Hodge theory and $\L^p$-analysis of operators on manifolds, where geometric assumptions are typically required to control heat kernels, Riesz transforms, and Hodge decompositions. See for example \cite{AMR08}, \cite{BDG23}, \cite{Cha07}, \cite{ChL25}, \cite{CTW23}, \cite{CoD99}, \cite{CCH06}, \cite{CoD03}, \cite{CoL94}, \cite{DeK23}, \cite{ISS99}, \cite{Li08}, \cite{Li09}, \cite{Li10}, \cite{Li11}, \cite{NeV17}, \cite{Sco95}, \cite{Str83} and \cite{Str86}. Our work fits within this line of research, although we crucially exploit compactness as the main global input. Finally, we refer to \cite{Arh26b} for a study of Dolbeault--Dirac operators on compact K\"ahler manifolds in the same spirit.

\paragraph{Structure of the paper.}
After recalling the basic analytic setup for functional calculus of sectorial and bisectorial operators in Section \ref{sec-preliminaries}, we give some background on differential geometry in Section \ref{sec-Back-geometry}. Section~\ref{sec-Hinfty-calculus} is the analytical heart of the paper. We establish bisectoriality and the boundedness of the $\H^\infty$ functional calculus for the Hodge--Dirac operator on compact manifolds, using Gaussian estimates and domination arguments. In Section~\ref{sec-Banach-spectral-triples}, we introduce the Banach spectral triple structure.  
We also show that this compact Banach spectral triple is $d^+$-summable. Section~\ref{Section-Index-Euler-operator} and Section~\ref{Section-Index-signature-operator} are devoted to the explicit computation of the $\L^p$-index of the Euler and signature operators. Finally, in Section \ref{Appendix}, we describe the structure of unbounded closed operators anticommuting with a bounded symmetry.

\section{Preliminaries on operator theory and functional calculus}
\label{sec-preliminaries}

\subsection{Sectorial operators}

For any angle $\theta \in (0,\pi)$, we introduce the open sector symmetric around the positive real half-axis with opening angle $2\theta$
\begin{equation}
\label{def-sigma-omega}
\Sigma_{\theta} 
\ov{\mathrm{def}}{=} \big\{ z \in \mathbb{C} \backslash \{ 0 \} : \: | \arg z | < \theta \big\}.
\end{equation}
See Figure \ref{figure-sector}. It is useful to put $\Sigma_0 \ov{\mathrm{def}}{=} (0,\infty)$. 

%
%
%

\begin{figure}[ht]
\centering
\includegraphics[scale=0.4]{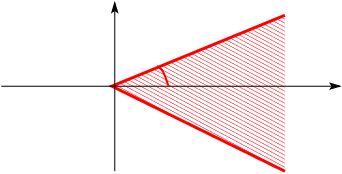}
\begin{picture}(0,0)
\put(-70,38){$\theta$}
\put(-45,20){$\Sigma_\theta$}
\end{picture}
\caption{open sector $\Sigma_\theta$ of angle $2\theta$}
\label{figure-sector}
\end{figure}

We refer to the books \cite{Haa06} and \cite{HvNVW18} for background on sectorial operators and their $\H^\infty$ functional calculus, introduced in the seminal papers \cite{CDMY96} and \cite{McI86}. Let $A \co \dom A \subset X \to X$ be a closed densely defined linear operator acting on a Banach space $X$. We say\footnote{\thefootnote. We caution the reader that this definition may differ across the literature.} that $A$ is a $\theta$-sectorial operator for some angle $\theta \in (0,\pi)$ if its spectrum $\sigma(A)$ is a subset of the closed sector $\ovl{\Sigma_\theta}$ and if the set $\big\{zR(z,A) : z \in \mathbb{C} \backslash \overline{\Sigma_\theta} \big\}$ is bounded in the algebra $\B(X)$ of bounded operators acting on $X$, where $R(z,A) \ov{\mathrm{def}}{=} (z\,\Id-A)^{-1}$ is the resolvent operator. 
The operator $A$ is said to be sectorial if it is a $\theta$-sectorial operator for some $\theta \in (0,\pi)$. In this case, we can introduce the angle of sectoriality
\begin{equation}
\label{equ-sectorial-operator}
\omega_{\sec}(A) 
\ov{\mathrm{def}}{=} \inf\{ \theta \in (0,\pi) : A \textrm{ is $\theta$-sectorial} \}.
\end{equation}
According to \cite[Example 10.1.2 p.~362]{HvNVW18}, if $-A$ is the generator of a strongly continuous semigroup of bounded operators then the operator $A$ is sectorial with $\omega_{\sec}(A) \leq \frac{\pi}{2}$. Moreover, by \cite[Example 10.1.3 p.~362]{HvNVW18} and \cite[Proposition 3.4.4 p.~79]{Haa06}, the operator $A$ is sectorial with $\omega_{\sec}(A) < \frac{\pi}{2}$ if and only if $-A$ generates a bounded holomorphic (equivalently, bounded analytic) strongly continuous semigroup, that is, there exist an angle $\theta \in (0,\frac{\pi}{2})$ and a bounded holomorphic extension $\Sigma_\theta \to \B(X)$, $z \mapsto T_z$.

\begin{example} \normalfont
\label{Example-analytic-positive-selfadjoint}
According to \cite[Example 3.7.5 p.~150]{ABHN11}, if $A$ is a positive selfadjoint operator with dense domain acting on a Hilbert space, then $(\e^{-tA})_{t \geq 0}$ is a bounded holomorphic strongly continuous semigroup.
\end{example}


For any angle $\theta \in (0,\pi)$, we consider the algebra $\H^{\infty}(\Sigma_\theta)$\label{algebra-Hinfty} of all bounded analytic functions $f \co \Sigma_\theta \to \mathbb{C}$, equipped with the supremum norm 
\begin{equation}
\label{norm-Hinfty}
\norm{f}_{\H^{\infty}(\Sigma_\theta)}
\ov{\mathrm{def}}{=} \sup\bigl\{\vert f(z)\vert \, :\, z\in \Sigma_\theta\bigr\}.
\end{equation}
Let $\H^{\infty}_{0}(\Sigma_\theta)$\label{algebra-Hinfty0} be the subalgebra of bounded analytic functions $f \co \Sigma_\theta \to \mathbb{C}$ for which there exist $s,C>0$ such that 
\begin{equation*}
\label{ine-Hinfty0}
\vert f(z)\vert
\leq C\min\{|z|^s,|z|^{-s}\}, \quad z \in \Sigma_\theta,
\end{equation*} 
as discussed in \cite[Section 2.2]{Haa06}.


Let $A$ be a sectorial operator acting on a Banach space $X$. Consider some angle $\theta \in (\omega_{\sec}(A), \pi)$ and some function $f \in \H^\infty_0(\Sigma_\theta)$. Following \cite[p.~30]{Haa06}, \cite[p.~5]{LM99} (see also \cite[p.~369]{HvNVW18}), for any angle $\nu \in (\omega_{\sec}(A),\theta)$ 
 we introduce the operator
\begin{equation}
\label{2CauchySec}
f(A)
\ov{\mathrm{def}}{=} \frac{1}{2\pi \i}\int_{\partial\Sigma_\nu} f(z) R(z,A) \d z,
\end{equation}
acting on $X$, with a Cauchy integral, where the boundary $\partial\Sigma_\nu$ is parametrized by
\begin{equation*}
\label{3contour}
\partial\Sigma_\nu(t)
\ov{\mathrm{def}}{=} \begin{cases}
-t \e^{\i\nu} &\text{if } t\in (-\infty,0]\\
t \e^{-\i\nu} &\text{if } t\in [0,\infty)\\
\end{cases}.
\end{equation*}
The sectoriality condition ensures that this integral is absolutely convergent and defines a bounded operator on the Banach space $X$. Using Cauchy's theorem, it is possible to show that this definition does not depend on the choice of the angle $\nu$. The resulting map $\H^\infty_0(\Sigma_\theta) \to \B(X)$, $f \mapsto f(A)$ is an algebra homomorphism.

Following \cite[Definition 2.6 p.~6]{LM99} (see also \cite[p.~114]{Haa06}), we say that the operator $A$ admits a bounded $\H^\infty(\Sigma_\theta)$ functional calculus if the latter homomorphism is bounded, i.e., if there exists a constant $C \geq 0$ such that 
\begin{equation}
\label{Def-functional-calculus}
\norm{f(A)}_{X \to X} 
\leq C\norm{f}_{\H^\infty(\Sigma_\theta)},\quad f \in \H^\infty_0(\Sigma_\theta).
\end{equation}
In this context, we can introduce the $\H^\infty$-angle
\begin{equation*}
\label{angle-Hinfty}
\omega_{\H^\infty}(A) 
\ov{\mathrm{def}}{=} \inf\{\theta \in (\omega_{\sec}(A),\pi) : A \text{ admits a bounded $\H^\infty(\Sigma_\theta)$ functional calculus} \}.
\end{equation*}
Suppose that the Banach space $X$ is reflexive. If the operator $A$ is injective and admits a bounded $\H^\infty(\Sigma_\theta)$ functional calculus, then the previous homomorphism naturally extends to a bounded homomorphism $f \mapsto f(A)$ from the algebra $\H^\infty(\Sigma_\theta)$ into the algebra $\B(X)$ of bounded linear operators on $X$. 

\begin{example} \normalfont
\label{positive-selfadjoint}
By \cite[Proposition 10.2.23 p.~388]{HvNVW18}, a positive selfadjoint operator $A$ with dense domain on a complex Hilbert space $H$ is sectorial and admits a bounded $\H^\infty(\Sigma_\theta)$ functional calculus for any angle $\theta >0$, i.e.~$\omega_{\H^\infty}(A)=0$.
\end{example}

\subsection{Bisectorial operators}
We refer to \cite{Ege15} and to the books \cite{HvNVW18} and \cite{HvNVW23} for more information on bisectorial operators, which can be seen as generalizations of unbounded selfadjoint operators in the framework of Banach spaces. For any angle $\theta \in (0,\frac{\pi}{2})$, we consider the open bisector $\Sigma_\theta^\bi \ov{\mathrm{def}}{=} (-\Sigma_\theta) \cup \Sigma_\theta$ where the sector $\Sigma_{\theta}$ is defined in \eqref{def-sigma-omega} and $\Sigma_0^\bi \ov{\mathrm{def}}{=} (-\infty,0) \cup (0,\infty)$. Following \cite[Definition 10.6.1 p.~447]{HvNVW18},
 we say that a closed densely defined operator $D$ on a Banach space $X$ is $\theta$-bisectorial for some angle $\theta \in (0,\frac{\pi}{2})$ if its spectrum $\sigma(D)$ is a subset of the closed bisector $\ovl{\Sigma^\bi_{\theta}}$ and if the subset $\big\{z R(z,D) : z \not\in \ovl{\Sigma_{\theta}^\bi} \big\}$ is bounded in the space $\B(X)$ of bounded operators acting on $X$, where $R(z,D) \ov{\mathrm{def}}{=} (z\, \Id -D)^{-1}$ denotes the resolvent operator. See Figure \ref{fig:bisectorial-spectrum}. The infimum of all $\theta \in (0,\frac{\pi}{2})$ such that $D$ is $\theta$-bisectorial is called the angle of bisectoriality of $D$ and denoted by $\omega_{\bi}(D)$.

\begin{figure}[ht]
\centering
\begin{tikzpicture}[scale=0.55]
\clip (-4.5,-2) rectangle (4.5,2);

\fill[pattern=north east lines, pattern color=blue!40, opacity=0.4]
  (0,0) -- (5,0) -- (5,5) -- cycle;
\fill[pattern=north east lines, pattern color=blue!40, opacity=0.4]
  (0,0) -- (5,0) -- (5,-5) -- cycle;

\fill[pattern=north east lines, pattern color=blue!40, opacity=0.4]
  (0,0) -- (-5,0) -- (-5,5) -- cycle;
\fill[pattern=north east lines, pattern color=blue!40, opacity=0.4]
  (0,0) -- (-5,0) -- (-5,-5) -- cycle;

\filldraw[fill=blue!20, draw=black, domain=0:2*pi, samples=200]
  plot ({4*sqrt(2)*cos(\x r)/(1+sin(\x r)^2)},
        {1.5*sqrt(2)*cos(\x r)*sin(\x r)/(1+sin(\x r)^2)}) -- cycle;

\draw (1,0) arc (0:45:1);
\node at (1,0.8) [right] {$\theta$};

\draw (-4.5,0) -- (4.5,0);
\draw (0,-2) -- (0,2);
\draw (-2,2) -- (2,-2);
\draw (-2,-2) -- (2,2);

\node[right] at (-4.5,0.25) {$\sigma(D)$};
\node[right] at (2.5,1.3) {$\ovl{\Sigma^\bi_{\theta}}$};
\end{tikzpicture}
\caption{spectrum $\sigma(D)$ of a bisectorial operator $D$}
\label{fig:bisectorial-spectrum}
\end{figure}

By \cite[Example 3.4.15 p.~163]{Ege15}, any selfadjoint operator $D$ on a complex Hilbert space $H$ is bisectorial with $\omega_{\bi}(D)=0$. It is worth noting that if $D$ is a $\theta$-bisectorial operator on a Banach space $X$ then by \cite[Proposition 10.6.2 (2) p.~448]{HvNVW18} its square $D^2$ is $2\theta$-sectorial and we have
\begin{equation}
\label{Bisec-Ran-Ker}
\ovl{\Ran D^2}
=\ovl{\Ran D}
\quad \text{and} \quad
\ker D^2
=\ker D.
\end{equation}
If the Banach space $X$ is reflexive, we have by \cite[Proposition 3.2.2 (iv)]{Ege15} a topological decomposition
\begin{equation}
\label{decomposition-de-X}
X
=\ker D \oplus \ovl{\Ran D}.
\end{equation}

\paragraph{Functional calculus} Consider a bisectorial operator $D$ on a Banach space $X$ of angle $\omega_\bi(D)$. For any angle $\theta \in (\omega_\bi(D), \frac{\pi}{2})$ and any function $f$ belonging to the space 
$$
\H^{\infty}_0(\Sigma_\theta^\bi) 
\ov{\mathrm{def}}{=}  \left\{ f \in \H^\infty(\Sigma_\theta^\bi) :\: \exists C,s > 0 \: \forall \: z \in \Sigma_\theta^\bi : \: |f(z)| \leq C \min\{|z|^s, |z|^{-s} \}  \right\},
$$
we can define a bounded operator $f(D)$ acting on the space $X$ by integrating on the boundary $\partial \Sigma^\bi_{\nu}$ of the bisector $\Sigma^\bi_{\nu}$ for some angle $\nu \in (\omega_\bi(D),\theta)$ using a Cauchy integral
\begin{equation}
\label{def-f(D)}
f(D)
\ov{\mathrm{def}}{=} \frac{1}{2\pi \i}\int_{\partial \Sigma^\bi_{\nu}} f(z)R(z,D) \d z.
\end{equation}
The integration contour is oriented counterclockwise, so that the interiors of the two sectors $\Sigma_\nu$ and $-\Sigma_\nu$ are always to its left. The integral in \eqref{def-f(D)} converges absolutely thanks to the decay of the function $f$ and is independent of the particular choice of the angle $\nu$ by Cauchy's integral theorem. We refer to \cite[Section 3.2.1]{Ege15} and \cite[Theorem 10.7.10 p.~449]{HvNVW18} for more details. 

The operator $D$ is said to admit a bounded $\H^\infty(\Sigma_\theta^\bi)$ functional calculus if there exists a constant $C \geq 0$ such that 
\begin{equation}
\label{funct-cal-bisector}
\bnorm{f(D)}_{X \to X} 
\leq C \norm{f}_{\H^\infty(\Sigma_\theta^\bi)}, \quad f \in \H^{\infty}_0(\Sigma_\theta^\bi).
\end{equation}
Suppose that the Banach space $X$ is reflexive. If the operator $D$ is injective and admits a bounded $\H^\infty(\Sigma_\theta^\bi)$ functional calculus, then the previous homomorphism naturally extends to a bounded homomorphism $f \mapsto f(D)$ from the algebra $\H^\infty(\Sigma_\theta^\bi)$ into the algebra $\B(X)$ of bounded linear operators on $X$.



\begin{example} \normalfont
\label{ex-signe}
Using the function $\sgn\ov{\mathrm{def}}{=} 1_{\Sigma_\theta}-1_{-\Sigma_\theta}$ and the injective part $D|_{\ovl{\Ran D}}$, we can define, following \cite[p.~498]{HvNVW23}, the bounded operator $\sgn D \co \ovl{\Ran D} \to \ovl{\Ran D}$. Using the decomposition \eqref{decomposition-de-X}, we extend it on $X$ by putting $\sgn D|_{\ker D}=0$. This operator plays a role analogous to that of the Hilbert transform in classical harmonic analysis. This operator will play an important role below, since it is used to produce a Banach Fredholm module and hence a $\K$-homology class, see Proposition \ref{prop-triple-to-Fredholm}.
\end{example}

\section{Background on differential geometry}
\label{sec-Back-geometry}

\paragraph{Differential forms on Riemannian manifolds} Let $M$ be a smooth compact manifold of dimension $d$. For each $k \in \{0,\ldots,d\}$, we denote by $\Omega^k(M)$ the space of smooth \textit{complex} $k$-forms and by $
\Omega^\bullet(M)
\ov{\mathrm{def}}{=}
\oplus_{k=0}^d \Omega^k(M)$ 
the space of smooth differential forms of all degrees. Recall that by \cite[pp.~168-169]{RuS13} the exterior differential $\d \co \Omega^\bullet(M) \to \Omega^\bullet(M)$ is a graded derivation of degree 1, i.e., we have the Leibniz rule
\begin{equation}
\label{Leibniz-manifold-d}
\d(\eta \wedge \omega)
=\d \eta \wedge \omega + (-1)^{\deg \eta}\eta \wedge\d\omega, \quad \eta,\omega \in \Omega^\bullet(M).
\end{equation}
For any smooth vector field $X$ on the manifold $M$, following \cite[Definition 6.4.7 p.~429]{AMR88}, if $k \geq 0$ we denote by $i_X \co \Omega^{k+1}(M) \to \Omega^{k}(M)$ the interior product with $X$ (or insertion operator), defined by $(i_X\omega)(X_1,\ldots,X_k) \ov{\mathrm{def}}{=} \omega(X,X_1,\ldots,X_k)$ and $i_X\omega \ov{\mathrm{def}}{=} 0$ if $\omega \in \Omega^0(M)$.
 
If the smooth compact manifold $M$ is endowed with a Riemannian metric $g$, then for each $x \in M$ we have a canonical inner product on the cotangent space $\mathrm{T}_x^*M$. For any $k \in \{1,\ldots,d\}$, we define an inner product on 
$\Lambda^k \mathrm{T}_x^*M$ by
\[
\la u_1 \wedge \cdots \wedge u_k, v_1 \wedge \cdots \wedge v_k 
\ra_{\Lambda^k \mathrm{T}_x^*M}
\ov{\mathrm{def}}{=}
\det \big[\la u_i, v_j \ra_{\mathrm{T}_x^*M}\big]_{1 \leq i,j \leq k},
\]
where $u_1,\ldots,u_k,v_1,\ldots,v_k \in \mathrm{T}_x^*M$, and we extend this definition by bilinearity to arbitrary elements of $\Lambda^k \mathrm{T}_x^*M$. We denote by $|\cdot|_{\Lambda^k \mathrm{T}_x^*M}$ the associated norm. If $z \in \mathbb{C}$, we let $|z|_{\Lambda^0 \mathrm{T}_x^*M} \ov{\mathrm{def}}{=} |z|$. 
Finally, these inner products induce an inner product on the exterior algebra $\Lambda \mathrm{T}_x^*M$ and we denote  by $|\cdot|_{\Lambda \mathrm{T}_x^*M}$ the associated norm. If $\omega,\eta \in \Omega^k(M)$, we can consider the function $\la \omega,\eta \ra_{\Omega^{k}(M)} \co M \to \mathbb{C}$ defined by $\la \omega,\eta\ra_{\Omega^{k}(M)}(x)=\la \omega(x),\eta(x)\ra_{\Lambda^k \mathrm{T}_x^*M}$.
We also use the adjointness relation between wedge and interior product: for any  differential forms $\eta \in \Omega^{k-1}(M)$, $\omega \in \Omega^{k}(M)$ and $\alpha \in \Omega^1(M)$, we have by \cite[p.~327]{PRG14}
\begin{equation}
\label{eq:wedge-interior-adjoint}
\la \omega,\alpha \wedge \eta \ra_{\Omega^{k}(M)}
=\la i_{\alpha^\sharp}\omega,\eta \ra_{\Omega^{k-1}(M)},
\end{equation}
where $\sharp$ is the musical isomorphism. The Riemannian gradient of a smooth function $f \co M \to \mathbb{C}$ is the unique smooth section $\nabla f$ of the complexified tangent bundle $\mathrm{T}_{\mathbb{C}}M$ such that $g_{\mathbb{C}}(\nabla f,X)
=X(f)$ 
for any smooth complex vector field $X$ on $M$, where $g_{\mathbb{C}}$ denotes the complex-bilinear extension of $g$. See \cite[p.~343]{Lee13} for the particular case of real-valued functions. In other words, we have
\begin{equation}
\label{sharp}
(\d f)^\sharp=\nabla f.
\end{equation}
We denote by $\dist(\cdot,\cdot)$ the Riemannian distance on $M$.

\paragraph{$\L^p$-spaces on vector bundles}
Let $E$ be a smooth Hermitian vector bundle of finite rank over a smooth compact manifold $M$ equipped with a Lebesgue measure $\mu$. Suppose that $1 \leq p \leq \infty$. For a measurable section $f \co M \to E$, following \cite[p.~481]{Nic21} and \cite[Chapter I]{Gun17} we define
\begin{equation}
\label{Lp-norm-vector-bundle}
\norm{f}_{\L^p(M,E)} 
\ov{\mathrm{def}}{=} 
\begin{cases}
\left( \int_M \norm{f(x)}_{E_x}^p \d\mu(x) \right)^{\frac{1}{p}}, & 1 \leq p < \infty,\\
\esssup_{x \in M} \norm{f(x)}_{E_x}, & p=\infty,
\end{cases}
\end{equation}
and we denote by $\L^p(M,E)$ the corresponding Banach space of measurable sections modulo equality $\mu$-almost everywhere.

If $M$ is a compact smooth Riemannian manifold, we denote by $\mu_g$ the Riemannian measure on $M$ defined in \cite[Theorem 3.11 p.~59]{Gri09}. We set $\L^p(\Omega^\bullet(M)) \ov{\mathrm{def}}{=} \L^p(M,\Omega^\bullet(M))$, where the norm on each fiber is given by the pointwise $\ell^2$-sum of degrees. Thus, if $\omega=\sum_{k=0}^d \omega_k$ with $\omega_k \in \Omega^k(M)$, then
\begin{equation}
\label{norm-Lp-Df}
\norm{\omega}_{\L^p(\Omega^\bullet(M))}
=
\bigg(\int_M \bigg(\sum_{k=0}^d |\omega_k(x)|_{\Lambda^k \mathrm{T}_x^*M}^2\bigg)^{\frac{p}{2}} \d\mu_g(x)\bigg)^{\frac{1}{p}},
\qquad 1 \leq p < \infty.
\end{equation}
If $p=2$ this norm is induced by an inner product $\la \cdot, \cdot \ra_{\L^2(\Omega^\bullet(M))}$. We will use the formal adjoint (or codifferential) $\d^* \co \Omega^\bullet(M) \to \Omega^\bullet(M)$ of $\d$ defined by the formula
\begin{equation}
\label{def-codiff}
\la \d \alpha, \beta \ra_{\L^2(\Omega^\bullet(M))}
=\la \alpha, \d^*\beta \ra_{\L^2(\Omega^\bullet(M))}, \quad \alpha,\beta \in \Omega^\bullet(M).
\end{equation}

\paragraph{Hodge star operator on oriented Riemannian manifolds} Now, we suppose that $(M,g)$ is a (connected) smooth compact \textit{oriented} Riemannian manifold of dimension $d$. We denote by $\vol_g \in \Omega^d(M)$ the Riemannian volume form, see \cite[Proposition 15.29 p.~389]{Lee13}. According to \cite[(20.8.6.3)]{Dieu74}, for any integrable scalar function $f$, we have $\int_M f \d \mu_g=\int_M f\, \vol_g$.
Following \cite[p.~412]{Lee09} and \cite[p.~457]{AMR88}, we denote by $* \co \Omega^k(M) \to \Omega^{d-k}(M)$, $\beta \mapsto *\beta$ the Hodge star operator, 
where $*\beta$ is the unique differential form in $\Omega^{d-k}(M)$ satisfying
\begin{equation}
\label{eq:def-Hodge-star}
\alpha \wedge * \beta 
= \la \alpha, \beta\ra_{\Omega^{k}(M)} \vol_g, \quad \alpha \in \Omega^k(M).
\end{equation}
By \cite[Exercise 4.3.8 p.~211]{Nab11}, 
we have
\begin{equation}
\label{*-et-Mf}
*(f\omega)
=f*\omega, \quad f \in \C^\infty(M), \omega \in \Omega^k(M).
\end{equation}
Moreover, on the space $\Omega^k(M)$ of $k$-forms the Hodge star operator satisfies by \cite[p.~299]{Dieu74}
\begin{equation}
\label{eq:star-square}
*^2
=(-1)^{k(d-k)} \Id_{\Omega^k(M)}.
\end{equation}
Recall that
\begin{equation}
\label{Hodge-norm}
|\omega(x)|_{\Lambda^k\mathrm{T}_x^*M}
=
|*\omega(x)|_{\Lambda^{d-k}\mathrm{T}_x^*M},
\qquad x \in M, \omega \in \Omega^k(M).
\end{equation}
Finally, recall that by \cite[p.~135]{Jos17} and \cite[Definition 6.5.21 p.~457]{AMR88} the codifferential $\d^* \co \Omega^{k}(M) \to \Omega^{k-1}(M)$ satisfies 
\begin{equation}
\label{d-star-Omega-k}
\d^*\alpha 
\ov{\mathrm{def}}{=} (-1)^{d(k+1)+1} *(\d* \alpha), \quad \alpha \in \Omega^{k}(M),
\end{equation}
where $k \geq 1$, and $\d^*(\Omega^0(M)) \ov{\mathrm{def}}{=} 0$. 


\section{$\mathrm{H}^\infty$ functional calculus of the Hodge--Dirac operator on a compact manifold}
\label{sec-Hinfty-calculus}


The Hodge--Dirac operator $D$ defined in \eqref{def-Hodge-Dirac} is closable on the space $\L^p(\Omega^\bullet(M))$ by \cite[Lemma 4.2 p.~3129]{NeV17} or alternatively by combining \cite[Theorem 5.28 p.~168]{Kat76} with the symmetry \cite[Theorem 13.6 p.~318]{BlB13} of $D$. We denote by $D_p \co \dom D_p \subset \L^p(\Omega^\bullet(M)) \to \L^p(\Omega^\bullet(M))$ its closure. In this section, we prove that $D_p$ is bisectorial and admits a bounded $\mathrm{H}^\infty$ functional calculus. Our strategy is inspired by that of \cite{NeV17}. This latter approach uses curvature assumptions for proving the boundedness of Riesz transforms on $\L^p$-spaces of $k$-forms and the boundedness of the $\mathrm{H}^\infty$ functional calculus of the Hodge-de Rham Laplacian. On the one hand, we use instead global pseudo-differential calculus for establishing the boundedness of Riesz transforms. On the other hand, we prove Gaussian estimates on the Hodge-de Rham Laplacian on $k$-forms using the ones of the Laplace-Beltrami operator $\Delta$ and a domination argument. Since the Hodge-de Rham Laplacian $\Delta_{\HdR,2}$ is a positive selfadjoint operator on the Hilbert space $\L^2(\Omega^\bullet(M))$, it admits by Example \ref{positive-selfadjoint} a bounded $\H^\infty$ functional calculus at the level $p=2$. Gaussian estimates allow us to use the extrapolation result of Duong and Robinson \cite{DuR96} and Haller-Dintelmann \cite{Hal05} for obtaining the boundedness of the $\H^\infty$ functional calculus on the space $\L^p(\Omega^\bullet(M))$ for all $1 < p < \infty$. A standard adaptation of the argument in \cite{NeV17} then yields  the bisectoriality and the boundedness of the functional calculus of the Hodge Dirac operator $D_p$.

\begin{prop}
\label{prop-bounded-Riesz-transforms}
Let $(M,g)$ be a smooth compact Riemannian manifold. Suppose that $1 < p < \infty$. The Riesz transforms $\d\Delta_{\HdR}^{-\frac{1}{2}}$ and $\d^*\Delta_{\HdR}^{-\frac{1}{2}}$ are bounded on the space $\L^p(\Omega^\bullet(M))$.
\end{prop}

\begin{proof}
Note that by \cite[(23.29.10)]{Dieu88} the exterior derivative $\d$ is a differential operator of order $1$ and that by \cite[p.~126 and Corollary 5.20 p.~128]{Voi07} (see also \cite[pp.~318-319]{Dieu82} for the orientable case) the Hodge-de Rham Laplacian $\Delta_{\HdR}$ is an elliptic differential operator of order $2$. Let $P \co \Omega^\bullet(M) \to \cal{H}(M)$ be the projection on the subspace $\cal{H}(M) \ov{\mathrm{def}}{=} \oplus_{k=0}^d \cal{H}^k(M)$ of harmonic forms, which is smoothing, according to \cite[(24.48.7) p.~321]{Dieu82}. In particular by \cite[Theorem 1 p.~124]{Bur68} \cite{See67} \cite{Shu01}, we see that $\Delta_{\HdR}^{-\frac{1}{2}}(\Id-P)$ is a pseudo-differential operator of order $-1$ (defined as 0 on the subspace of harmonic forms). Hence by the composition rule \cite[17.13.5 p.~298]{Dieu72}, $\d\Delta_{\HdR}^{-\frac{1}{2}}(\I-P)$ is a pseudo-differential operator of order $0$, which we symbolically denote by $\d\Delta_{\HdR}^{-\frac{1}{2}}$. Therefore, by a classical result \cite[Proposition 5.1]{Arh26b}, it extends to a bounded operator on the Banach space $
\L^p(\Omega^\bullet(M))
=
\ker \Delta_{\HdR,p}
\oplus
\ovl{\Ran \Delta_{\HdR,p}}$, 
where the decomposition follows from \cite[Proposition 2.1.1 (h) p.~21]{Haa06}. Note that $\d\Delta_{\HdR}^{-\frac{1}{2}}$ is understood as acting by zero on the Banach subspace $
\ker \Delta_{\HdR,p}
=
\ker \Delta_{\HdR,p}^{\frac12}$, 
where the latter equality is provided by \cite[Proposition 3.1.1 (d) p.~61]{Haa06}. On the subspace $\Ran \Delta_{\HdR,p}$, the Riesz transform is initially defined in the usual way, and the preceding boundedness allows it to extend uniquely by continuity to $\ovl{\Ran \Delta_{\HdR,p}}$. A similar reasoning shows that the Riesz transform $\d^*\Delta_{\HdR}^{-\frac{1}{2}}$ is also bounded on the space $\L^p(\Omega^\bullet(M))$.
\end{proof}

The following domination result for the Hodge-de Rham Laplacian is \cite[Theorem 2.2 p.~495]{Li10}. This reference does not provide a proof and the reference cited in \cite{Li10} before this result does not contain this result. However, it can be found in \cite{Ros88} and \cite{DoL82}. 
Here, $\W_k$ is the Weitzenb\"ock curvature on $k$-forms.

\begin{prop}
\label{prop-domination}
Let $M$ be a smooth compact Riemannian manifold of dimension $d$. Let $k \in \{0,\ldots,d\}$. Suppose that $\W_k \geq -a$ for some $a \in \R$. For any differential form $\omega \in \Omega^k(M)$, we have
\begin{equation}
\label{domination}
|\e^{-t\Delta_{\HdR}}\omega|
\leq \e^{a t} \e^{t\Delta} |\omega|, \quad t > 0,
\end{equation}
where $\Delta\ov{\mathrm{def}}{=} \div \grad$ is the Laplace-Beltrami operator, acting as a negative operator on the space of functions.
\end{prop}

For any integer $k \in \{0,\dots,d\}$, let $\Delta_k$ be the Hodge-de Rham Laplacian on $k$-forms, and denote by $p_t^k(x,y)$ the smooth integral kernel of the heat semigroup $e^{-t\Delta_k}$ acting on $\L^2(\Omega^k(M))$. We need the following result on kernel domination.

\begin{lemma}
\label{lemma-kernel-domination}
Let $M$ be a smooth compact Riemannian manifold of dimension $d$. Let $k \in \{0,\ldots,d\}$. Assume that for some $a \in \R$ we have for any $\omega \in \L^2(\Omega^k(M))$ the estimate
\begin{equation}
\label{domin-bis}
\bigl|\e^{-t\Delta_k}\omega\bigr|
\leq \e^{at} \e^{t\Delta}(|\omega|),\quad t > 0.
\end{equation}
Let $p_t^k(x,y)$ and $p_t(x,y)$ be the smooth integral kernels of the operators $\e^{-t\Delta_k}$ and $\e^{t\Delta}$. Then
\[
\norm{p_t^k(x,y)}
\leq \e^{at} p_t(x,y),\qquad t>0, x,y \in M.
\]
\end{lemma}

\begin{proof}
Since $\e^{-t\Delta_k}$ has kernel $p_t^k$, for every $\omega$ we have
\begin{align*}
\MoveEqLeft
\bigl|(\e^{-t\Delta_k}\omega)(x)\bigr|         
=\left|\int_M p_t^k(x,y)\omega(y)\d\mu_g(y) \right| 
\leq \int_M \norm{p_t^k(x,y)} |\omega(y)| \d\mu_g(y).
\end{align*}
Similarly, we have
\[
(\e^{t\Delta}|\omega|)(x)
=\int_M p_t(x,y) |\omega(y)| \d\mu_g(y),
\]
with $p_t(x,y) \geq 0$. Fix $y \in M$, $t > 0$ and a unit vector $v \in \Lambda^k \mathrm{T}^*_y M$. With \cite[(16.12.11)]{Dieu72}, we can choose a smooth differential form $\sigma \in \Omega^k(M)$ over $M$ with $\sigma(y)=v$ Let $(\rho_n)$ be a sequence of non-negative smooth functions supported in geodesic balls $B(y,\frac{1}{n})$ such that $\int_M \rho_n(z) \d \mu_g(z) = 1$ (an approximate identity). Define the sequence of test forms $\omega_n \ov{\mathrm{def}}{=} \rho_n\sigma$. Applying the semigroup domination assumption \eqref{domin-bis} to $\omega_n$, we obtain for any integer $n$
\[
\left| \int_M p_t^k(x,z) \omega_n(z) \d\mu_g(z) \right| 
=\bigl|(\e^{-t\Delta_k}\omega_n)(x)\bigr|    
\ov{\eqref{domin-bis}}{\leq} 
\e^{at} \e^{t\Delta}(|\omega_n|)
=\e^{at} \int_M p_t(x,z) |\omega_n(z)| \d\mu_g(z).
\]
Letting $n \to \infty$, the properties of the approximate identity and the continuity of the kernels yield:
\[
|p_t^k(x,y) v| 
\leq \e^{at} p_t(x,y) |v| 
= \e^{at} p_t(x,y).
\]
Taking the supremum over unit vectors $v$ gives $\norm{p_t^k(x,y)} \leq \e^{at}p_t(x,y)$ almost everywhere. By smoothness of the kernels, the inequality holds pointwise for all $x,y \in M$.
\end{proof}

Recall that if $E$ is a smooth Hermitian vector bundle of finite rank over a compact Riemannian manifold $M$, we can consider the tensor bundle $E \boxtimes E^*$ over $M \times M$ with fiber
\[
(E \boxtimes E^*)_{(x,y)} 
= E_x \ot E_y^* 
= \Hom(E_y,E_x), \quad x,y \in M.
\]
We refer to \cite[p.~147]{BaC17}, \cite[(23.4.4)]{Dieu88} and \cite[p.~74]{BGV04} for more information. If $K \co M \times M \to E \boxtimes E^*$ is a continuous section then the function $M \times M  \to [0,\infty)$, $(x,y) \mapsto \norm{K(x,y)}_{\Hom(E_y,E_x)}$ is continuous. Moreover, for any $f \in \L^1(M,E)$ and any $x \in M$, we can define
\begin{equation}
\label{integral-operator}
(T_K f)(x) 
\ov{\mathrm{def}}{=} \int_M K(x,y) f(y) \d\mu_g(y),
\end{equation}
where the integral is a Bochner integral in the finite-dimensional space $E_x$. 
We need the following variant of the Dunford-Pettis theorem. 

\begin{prop}
\label{prop:kernel-norm-equality-bundles}
Let $(M,g)$ be a smooth compact Riemannian manifold. Consider a smooth Hermitian vector bundle $E$ of finite rank over $M$ and a continuous section $K \co M \times M \to E \boxtimes E^*$. Then $T_K$ extends uniquely to a bounded operator $T_K \co \L^1(M,E) \to \L^\infty(M,E)$, and its operator norm is given by
\[
\norm{T_K}_{\L^1(M,E) \to \L^\infty(M,E)}
=
\sup_{(x,y) \in M \times M} \norm{K(x,y)}_{\Hom(E_y,E_x)}.
\]
\end{prop}

\begin{proof}
Let $
C \ov{\mathrm{def}}{=} \sup_{(x,y)\in M\times M} \norm{K(x,y)}_{\Hom(E_y,E_x)}$. Consider a continuous section $f \co M \to E$. For any $x \in M$, we have
\begin{align*}
\MoveEqLeft
\norm{(T_K f)(x)}_{E_x}
\ov{\eqref{integral-operator}}{=}
\norm{\int_M K(x,y) f(y) \d\mu_g(y)}_{E_x} 
\leq
\int_M \norm{K(x,y) f(y)}_{E_x} \d\mu_g(y) \\
&\leq
\int_M \norm{K(x,y)}_{\Hom(E_y,E_x)} \norm{f(y)}_{E_y} \d\mu_g(y) 
\leq C \int_M \norm{f(y)}_{E_y} \d\mu_g(y)
\ov{\eqref{Lp-norm-vector-bundle}}{=} C \norm{f}_{\L^1(M,E)}.
\end{align*}
Taking the essential supremum over $x\in M$ yields $
\norm{T_K f}_{\L^\infty(M,E)} \leq C \norm{f}_{\L^1(M,E)}$. By density of the subspace $\Gamma^\infty(M,E)$ of smooth sections in the space $\L^1(M,E)$ and completeness of $\L^\infty(M,E)$, the operator extends uniquely to a bounded map $T_K \co \L^1(M,E) \to \L^\infty(M,E)$ with $\norm{T_K}_{\L^1(M,E) \to \L^\infty(M,E)} \leq C$. Since $M \times M$ is compact and since the function $(x,y)\mapsto \norm{K(x,y)}$ is continuous, there exist $(x_0,y_0) \in M \times M$ such that $
\norm{K(x_0,y_0)}_{\Hom(E_{y_0},E_{x_0})} = C$. Choose $u_0\in E_{y_0}$ with $\norm{u_0}_{E_{y_0}}=1$ such that
\begin{equation}
\label{inter-38}
\norm{K(x_0,y_0)u_0}_{E_{x_0}} 
= \norm{K(x_0,y_0)}_{\Hom(E_{y_0},E_{x_0})} 
= C.
\end{equation}
Fix $\epsi > 0$. There exist an open neighborhood $U$ of $y_0$ and a smooth trivialization $\Phi \co E|_U \to U \times \mathbb{C}^r$ which is fiberwise unitary. In particular, writing $\Phi_y \co E_y \to \mathbb{C}^r$ for the restriction to the fiber, we have
\[
\norm{\Phi_y(\xi)}_{\mathbb{C}^r}=\norm{\xi}_{E_y},
\qquad y\in U,\ \xi\in E_y.
\]
Let $e_1\in\mathbb{C}^r$ be the first vector of the canonical basis and assume, after composing $\Phi$ with a fixed unitary matrix if necessary, that $
\Phi_{y_0}(u_0)=e_1$. Define a smooth local section $\tilde u \co U \to E|_U$, $y \mapsto \Phi_y^{-1}(e_1)$. Then $\tilde u(y_0)=u_0$ and $\norm{\tilde u(y)}_{E_y}=1$ for all $y\in U$. Set $u\ov{\mathrm{def}}{=}\tilde u$, so $u$ is a smooth section on $U$ with $\norm{u(y)}_{E_y}=1$ for all $y\in U$. Since $K \co M\times M\to E\boxtimes E^*$ is continuous, the map $
y\mapsto K(x_0,y)u(y)\in E_{x_0}$ 
is continuous on $U$. Hence, shrinking $U$ if necessary, we may assume that
\begin{equation}
\label{inter-37}
\norm{K(x_0,y)u(y)-K(x_0,y_0)u_0}_{E_{x_0}} 
\leq \epsi,
\quad y \in U.
\end{equation}
Define a section $f \in \L^1(M,E)$ by
\begin{equation}
\label{inter-36}
f(y) \ov{\mathrm{def}}{=}
\begin{cases}
\mu(U)^{-1} u(y), & y\in U, \\
0, & y\notin U.
\end{cases}
\end{equation}
Then $\norm{f}_{\L^1(M,E)} \ov{\eqref{Lp-norm-vector-bundle}}{=} 1$. Moreover, we have
\begin{equation}
\label{inter-34}
(T_K f)(x_0)
\ov{\eqref{integral-operator}}{=}
\int_M K(x_0,y) f(y) \d\mu_g(y)
\ov{\eqref{inter-36}}{=}
\frac{1}{\mu(U)} \int_U K(x_0,y)u(y) \d\mu_g(y).
\end{equation}
We deduce that
\begin{align*}
\norm{(T_K f)(x_0) - K(x_0,y_0)u_0}_{E_{x_0}}
&\ov{\eqref{inter-34}}{=}
\norm{\frac{1}{\mu(U)} \int_U \bigl(K(x_0,y)u(y) - K(x_0,y_0)u_0\bigr) \d\mu_g(y)}_{E_{x_0}} \\
&\leq
\frac{1}{\mu(U)} \int_U \norm{K(x_0,y)u(y) - K(x_0,y_0)u_0}_{E_{x_0}} \d\mu_g(y) 
\ov{\eqref{inter-37}}{\leq} \epsi.
\end{align*}
We infer that
\[
\norm{(T_K f)(x_0)}_{E_{x_0}}
\geq
\norm{K(x_0,y_0)u_0}_{E_{x_0}} - \epsi
\ov{\eqref{inter-38}}{=}
C - \epsi.
\]
Consequently, we obtain
\[
\norm{T_K f}_{\L^\infty(M,E)}
\geq \norm{(T_K f)(x_0)}_{E_{x_0}}
\geq C - \epsi.
\]
Since $\epsi > 0$ is arbitrary and $\norm{f}_{\L^1(M,E)}=1$, we get $\norm{T_K}_{\L^1(M,E) \to \L^\infty(M,E)} \geq C$.
\end{proof}


\begin{prop}
\label{prop:Gk-compact}
Let $(M,g)$ be a smooth compact Riemannian manifold of dimension $d$. For any integer $k \in \{0,\dots,d\}$, there exist constants $C,c>0$ such that, for all $x,y\in M$,
\begin{equation}
\label{eq:Gk-compact}
\norm{p_t^k(x,y)}
\leq
\frac{C}{V(x,\sqrt t)}
\exp\Bigl(-c\frac{\dist(x,y)^2}{t}\Bigr), \quad t > 0,
\end{equation}
where $\norm{\cdot}$ is the operator norm on $\Hom(\Lambda^k\mathrm{T}_y^*M,\Lambda^k\mathrm{T}_x^*M)$ and $V(x,r)$ is the volume of the geodesic ball $B(x,r)$.
\end{prop}

\begin{proof}
By \cite[Corollary 5.3.5 p.~142]{Hsu02} the heat kernel $p_t(x,y)$ of the operator $\e^{t\Delta}$ acting on functions satisfies an estimate of the form
$$
p_t(x,y) 
\lesssim \frac{1}{t^{\frac{d}{2}}}, \quad x,y \in M, 0<t<1.
$$
By \cite[Exercise 16.5 p.~424]{Gri09} (or \cite[Theorem 1.1 p.~35]{Gri97}), we deduce the existence of constant $c > 0$ such that
\begin{equation}
\label{eq:G0-compact}
p_t(x,y) 
\lesssim \frac{1}{V(x,\sqrt{t})}\exp\Bigl(-c\frac{\dist(x,y)^2}{t}\Bigr), \quad x,y \in M, 0 <t < 1.
\end{equation}
Since the manifold $M$ is compact, there exists $a_k \geq 0$ such that $\W_k \geq -a_k$. Proposition \ref{prop-domination} yields the semigroup domination
\begin{equation}
\label{eq:domination-compact}
\bigl|\e^{-t\Delta_k}\omega\bigr|
\ov{\eqref{domination}}{\leq}
\e^{a_k t}\e^{t\Delta}(|\omega|),
\qquad t > 0, \omega \in \L^2(\Omega^k(M)).
\end{equation}
By Lemma \ref{lemma-kernel-domination}, this implies the pointwise kernel domination
\begin{equation}
\label{eq:kernel-domination}
\norm{p_t^k(x,y)}
\leq
\e^{a_k t}\,p_t(x,y),
\qquad x,y\in M,t > 0.
\end{equation}
Fix $t\in(0,1]$. Using $\e^{a_k t}\leq \e^{a_k}$ and inserting \eqref{eq:G0-compact} into \eqref{eq:kernel-domination}
we obtain \eqref{eq:Gk-compact} for all $t \in (0,1]$ (with possibly different constants).

Let $P \ov{\mathrm{def}}{=} P_2 \co \L^2(\Omega^k(M)) \to \L^2(\Omega^k(M))$ be the orthogonal projection onto the finite-dimensional space $\cal{H}^k(M)$ of harmonic $k$-forms. Then $\e^{-t\Delta_k}=\e^{-t\Delta_k}P+\e^{-t\Delta_k}(\Id-P)$. According to \cite[p.~126 and Corollary 5.20 p.~128]{Voi07} and \cite[(24.48.4) p.~319]{Dieu82}, $\Delta_k$ is an elliptic pseudo-differential operator of order 2 with a principal symbol of bijective type. Consequently, by \cite[(23.35.2)]{Dieu88} its spectrum is discrete, entirely composed of eigenvalues. Hence, on the subspace $(\Id-P)\L^2(\Omega^k(M))$ the spectrum of $\Delta_k$ is contained in $[\lambda_k,\infty)$ for some $\lambda_k > 0$. 
For any $t \geq 1$, we obtain by spectral theory
\begin{align}
\MoveEqLeft
\label{inter98}
\bnorm{\e^{-(t-2)\Delta_k}(\Id-P)}_{\L^2(\Omega^k(M)) \to \L^2(\Omega^k(M))}
\leq \e^{-\lambda_k(t-2)} 
\lesssim \e^{-\lambda_k t}.        
\end{align}
Now, for any $x \in M$ and any form $\omega \in \Omega^{k}(M)$, we have
\begin{align*}
\MoveEqLeft
\norm{\e^{-\Delta_k}\omega(x)}_{\Lambda^{k}\mathrm{T}_x^*M}
\ov{\eqref{integral-operator}}{=} \norm{\int_M p_1^{\Delta_k}(x,y)\omega(y) \d\mu_g(y) }_{\Lambda^{k}\mathrm{T}_x^*M} \\
&\leq \int_M \norm{p_1^{\Delta_k}(x,y)} \norm{\omega(y)}_{_{\Lambda^{k}\mathrm{T}_y^*M }}
\d\mu_g(y) 
\ov{\eqref{Lp-norm-vector-bundle}}{\leq} \left( \int_M \norm{p_1^{\Delta_k}(x,y)}^2 \d\mu_g(y) \right)^{\frac{1}{2}} \norm{\omega}_{\L^2(\Omega^{k}(M))}.
\end{align*} 
Since $p_1^{\Delta_k}$ is smooth on the compact manifold $M \times M$, we have
$$
\sup_{x \in M} \bigg( \int_M \norm{p_1^{\Delta_k}(x,y)}^2 \d\mu_g(y)
\bigg)^{\frac{1}{2}} <\infty.
$$ 
Consequently $\e^{-\Delta_k}$ induces a bounded operator from $\L^2(\Omega^{k}(M))$ into $\L^\infty(\Omega^{k}(M))$. By duality, using the symmetry of $\Delta_k$ provided by \cite[Proposition 3 (i) p.~540]{GMS98}, we have a bounded operator $\e^{-\Delta_k} \co \L^1(\Omega^{k}(M)) \to \L^2(\Omega^{k}(M))$.  
For any $t \geq 1$, we obtain
\begin{align*}
\MoveEqLeft
\norm{\e^{-t\Delta_k}(\Id-P)}_{\L^1(\Omega^k(M)) \to \L^\infty(\Omega^k(M))} \\
&\leq \norm{\e^{-\Delta_k}}_{\L^2 \to \L^\infty} \bnorm{\e^{-(t-2)\Delta_k}(\Id-P)}_{\L^2 \to \L^2} \norm{\e^{-\Delta_k}}_{\L^1 \to \L^2} 
\ov{\eqref{inter98}}{\lesssim}  \e^{-\lambda_k t}. 
\end{align*}
Moreover, the map $P=\e^{-t\Delta_k}P \co \L^1(\Omega^k(M)) \to \L^\infty(\Omega^k(M))$ is clearly bounded (according to \cite[(24.48.7) p.~321]{Dieu82}, it is an integral operator which is smoothing). We deduce that the operator $\e^{-t\Delta_k} \co \L^1(\Omega^k(M)) \to \L^\infty(\Omega^k(M))$ is bounded with norm $\leq C$ for any $t \geq 1$. By Proposition \ref{prop:kernel-norm-equality-bundles}, we deduce that $\norm{p_t^k(x,y)} \leq C$ for any $t \geq 1$ and any $x,y \in M$. Finally, for $t \geq 1$ one has $V(x,\sqrt t) \simeq \mu_g(M)$ and 
$$
\exp(-c\diam(M)^2) 
\leq \exp\Bigl(-c\frac{\dist(x,y)^2}{t}\Bigr).
$$ 
The proof is complete.
\end{proof}

We shall use the following result \cite[Proposition 9.1]{Arh26b}, which extends to operators acting on $\L^p$-spaces of vector bundles a result of Duong and Robinson \cite[Theorem 3.4 p.~108]{DuR96} and the vector-valued version due to Haller-Dintelmann \cite{Hal05}. Indeed, we can use \cite[Remark I.17 p.~13]{Gun17}, which says that each space $\L^p(M,E)$ is isometrically isomorphic to a Bochner space with values in a finite-dimensional space.

\begin{prop}
\label{prop:Duong-Robinson-vector-bundle}
Let $E$ be a Hermitian complex vector bundle of finite rank over a smooth compact Riemannian manifold $M$. Let $A$ be a sectorial operator on $\L^2(M,E)$ such that $-A$ generates a bounded holomorphic semigroup $
(S_z)_{z \in \Sigma_\theta}$ on $\L^2(M,E)$ for some angle $\theta \in (0,\frac{\pi}{2})$. Assume that, for every $t>0$, the operator $S_t$ admits an integral kernel $
K_t(x,y) \in \Hom(E_y,E_x)$ in the sense that
\[
(S_t\omega)(x)
=
\int_M K_t(x,y)\omega(y) \d\mu_g(y),
\qquad
\omega \in \L^2(M,E),
\]
and that there exist constants $c,C>0$ and $m>0$ such that
\begin{equation}
\label{eq:kernel-domination-vector-bundle}
\norm{K_t(x,y)}_{E_y \to E_x}
\leq
\frac{c}{\vol(x,t^{\frac{1}{m}})}
\exp\left(-C\frac{\dist(x,y)^m}{t}\right),
\qquad x,y \in M, t>0.
\end{equation}
Finally, assume that the operator $A$ admits a bounded $\H^\infty(\Sigma_\mu)$ functional calculus on the Hilbert space $\L^2(M,E)$ for some angle $\mu \in \left(\frac{\pi}{2}-\theta,\frac{\pi}{2}\right)$. Then, for every $p \in (1,\infty)$, the semigroup $(S_t)_{t>0}$ extends consistently to a bounded holomorphic semigroup on $\L^p(M,E)$. If $-A_p$ denotes its generator on $\L^p(M,E)$, then $A_p$ admits a bounded $\H^\infty(\Sigma_{\theta'})$ functional calculus for some angle $\theta' \in (0,\frac{\pi}{2})$.
\end{prop}

Now, we obtain a fundamental property of functional calculus of the Hodge-de Rham Laplacian.

\begin{thm}
\label{Th-funct-HdR}
Let $M$ be a  smooth compact Riemannian manifold. Suppose that $1 < p < \infty$. The closure $\Delta_{\HdR,p}$ of the Hodge-de Rham Laplacian $\Delta_{\HdR}$ is sectorial on the Banach space $\L^p(\Omega^\bullet(M))$ and admits a bounded $\H^\infty(\Sigma_\theta)$ functional calculus for some angle $\theta \in (0,\frac{\pi}{2})$.
\end{thm}

\begin{proof}
By Proposition \ref{prop:Gk-compact}, the semigroup generated by the Hodge-de Rham Laplacian $\Delta_{\HdR}$ admits ``Gaussian estimates''. So the assumption \eqref{eq:kernel-domination-vector-bundle} of Proposition \ref{prop:Duong-Robinson-vector-bundle} is satisfied. Note that $\Delta_{\HdR,2}=D_2^2$ is a positive selfadjoint operator on the Hilbert space $\L^2(\Omega^\bullet(M))$. So by \cite[Proposition 10.2.23 p.~388]{HvNVW18} it admits a bounded $\H^\infty(\Sigma_\theta)$ functional calculus for any angle $\theta >0$. Hence, Proposition \ref{prop:Duong-Robinson-vector-bundle} applies and yields that the operator $\Delta_{\HdR,p}$ admits a bounded $\H^\infty(\Sigma_\theta)$ functional calculus for some angle $\theta \in (0,\frac{\pi}{2})$ on the Banach space $\L^p(\Omega^\bullet(M))$.
\end{proof}

\begin{cor}
\label{cor-Hodge-Dirac-functional-calculus}
Let $(M,g)$ be a smooth compact Riemannian manifold. Suppose that $1 < p < \infty$. The Hodge--Dirac operator $D_p$ is bisectorial and admits a bounded $\H^\infty(\Sigma^\bi_\theta)$ functional calculus for some angle $\theta \in (0,\frac{\pi}{2})$ on the Banach space $\L^p(\Omega^\bullet(M))$.
\end{cor}

\begin{proof}
Combining Proposition \ref{prop-bounded-Riesz-transforms} with Theorem \ref{Th-funct-HdR} and a similar argument to that of \cite{NeV17}, we conclude that the Hodge–Dirac operator $D_p$ is bisectorial on the Banach space $\L^p(\Omega^\bullet(M))$ and admits a bounded $\H^\infty(\Sigma^\bi_\theta)$ functional calculus for some angle $\theta \in (0,\frac{\pi}{2})$. An alternative way to obtain this conclusion is to combine Proposition \ref{prop-bounded-Riesz-transforms} with Theorem \ref{Th-funct-HdR} and to replace the argument from \cite{NeV17} with the abstract Hodge--Dirac machinery developed in \cite{Arh26b}. Indeed, we can apply \cite[Theorem 3.22]{Arh26b} and \cite[Theorem 3.25]{Arh26b} with $
X \ov{\mathrm{def}}{=} \L^p(\Omega^\even(M))$, $Y \ov{\mathrm{def}}{=} \L^p(\Omega^\odd(M))$ the restrictions $A \ov{\mathrm{def}}{=} \Delta_{\HdR,p}^\even$ and $\widetilde{A} \ov{\mathrm{def}}{=}\Delta_{\HdR,p}^\odd$ of the closed Hodge--de Rham Laplacian $\Delta_{\HdR,p}$ to even and odd forms. Moreover, we use $\partial \ov{\mathrm{def}}{=} D_{p,+}$ and $\partial^\dagger \ov{\mathrm{def}}{=} D_{p,-}$, $
T_t \ov{\mathrm{def}}{=} \e^{-tA}$ and $\widetilde T_t \ov{\mathrm{def}}{=} \e^{-t\widetilde A}$ for any $t > 0$.

Note that on smooth even forms, the Hodge--de Rham Laplacian commutes $\Delta_\HdR$ with both $\d$ and $\d^*$ by \cite[(2.7.16) p.~166]{RuS17}. So, it is easy to check that the condition $\Curv_{\partial,\H^\infty}(0)$ of \cite[Definition 3.8]{Arh26b} is satisfied. The regularization property of \cite[Definition~3.15]{Arh26b} is proved as the proof of \cite[Corollary 5.23]{Arh26b}. Let $P_p^\odd \co \L^p(\Omega^\odd(M)) \to \cal H^\odd(M)$ be the bounded finite-rank projection onto the space of odd harmonic forms, and set $Q \ov{\mathrm{def}}{=} \Id_{\L^p(\Omega^\odd(M))} - P_p^\odd$. As in the proof of \cite[Lemma 5.22]{Arh26b}, we can prove that and that $\Ran Q = \Ran D_{p,+}$, $\ker Q \subset \dom D_{p,-}$ and $D_{p,-}|_{\ker Q}=0$. 
\end{proof}

Now, we prove that we can identify the square $D_p^2$ with the closure $\Delta_{\HdR,p}$ of $\Delta_{\HdR}$. We first need an intermediary result. We refer to \cite{GrS13} and to \cite[p.~483]{Nic21} for more information on Sobolev spaces on Riemannian manifolds.

\begin{lemma}
\label{lemma-I-plus-Kodaira-isomorphism-W2p}
Let $M$ be a smooth compact Riemannian manifold. Suppose that $1 < p < \infty$. Then the operator $\Id+\Delta_{\HdR}
\co
\W^{2,p}(\Omega^{\bullet}(M))
\to
\L^p(\Omega^{\bullet}(M))$ is an isomorphism of Banach spaces.
\end{lemma}

\begin{proof}
The operator $\Id+\Delta_{\HdR}$ is an elliptic differential operator of order $2$. According to \cite[(23.30.6)]{Dieu88} and \cite[Theorem 8.6 p.~207]{Gr09}, there exists a pseudo-differential operator $Q \co \Omega^{0}(M) \to \Omega^{0}(M)$ of order $-2$ and a smoothing operator $R \co \Omega^{0}(M) \to \Omega^{0}(M)$ such that
\begin{equation}
\label{eq:parametrix-elliptic-bis}
\Id 
= Q(\Id+\Delta_{\HdR}) + R.
\end{equation}
Now, we follow an argument of \cite[p.~193]{LaM89}. For any $u \in \W^{2,p}(\Omega^{\bullet}(M))$ we have using \cite[Proposition 5.1]{Arh26b}
\begin{align}
\MoveEqLeft
\label{eq:elliptic-estimate-P}
\norm{u}_{\W^{2,p}_\nabla(\Omega^{\bullet}(M))}
\ov{\eqref{eq:parametrix-elliptic-bis}}{=} \norm{Q (\Id+\Delta_{\HdR}) u+Ru}_{\W^{2,p}(\Omega^{\bullet}(M))} \\
&\lesssim \norm{(\Id+\Delta_{\HdR}) u}_{\L^p(\Omega^{\bullet}(M))}+\norm{Ru}_{\W^{2,p}(\Omega^{\bullet}(M))}
\lesssim \norm{(\Id+\Delta_{\HdR}) u}_{\L^p(\Omega^{\bullet}(M))}+\norm{u}_{\L^p}.  \nonumber       
\end{align}

We first show that $\Id+\Delta_{\HdR}$ is injective. Consider some $u \in \W^{2,p}(\Omega^{\bullet}(M))$ satisfying $(\Id+\Delta_{\HdR})u=0$. By elliptic regularity \cite[p.~190]{LaM89}, $u$ is smooth. Taking the $\L^2$ inner product with $u$, we obtain
\[
0
=
\big\la (\Id+\Delta_{\HdR})u,u\big\ra_{\L^2(\Omega^{\bullet}(M))}
=
\norm{u}_{\L^2(\Omega^{\bullet}(M))}^2
+
\big\la \Delta_{\HdR}u,u\big\ra_{\L^2(\Omega^{\bullet}(M))}.
\]
Since $\big\la \Delta_{\HdR}u,u\big\ra_{\L^2(\Omega^{\bullet}(M))} \geq 0$ by \cite[Proposition 3 p.~540]{GMS98}, we conclude that 
$u=0$. Hence $\Id+\Delta_{\HdR}$ is injective.

Now, we prove that the range of $\Id+\Delta_{\HdR}$ is closed. We claim that the estimate
\begin{equation}
\label{eq:elliptic-estimate-no-lower-term}
\norm{u}_{\W^{2,p}(\Omega^{\bullet}(M))}
\leq
C\norm{(\Id+\Delta_{\HdR})u}_{\L^p(\Omega^{\bullet}(M))},
\qquad
u\in\W^{2,p}(\Omega^{\bullet}(M)),
\end{equation}
holds. If not, there exists a sequence $(u_j)$ in $\W^{2,p}(\Omega^{\bullet}(M))$ such that
\[
\norm{u_j}_{\W^{2,p}_\nabla(\Omega^{\bullet}(M))}=1
\quad\text{and}\quad
\norm{(\Id+\Delta_{\HdR})u_j}_{\L^p(\Omega^{\bullet}(M))} \to 0.
\]
By compactness of the embedding $\W^{2,p}(\Omega^{\bullet}(M)) \hookrightarrow \L^p(\Omega^{\bullet}(M))$ on the compact manifold $M$ provided by \cite[Theorem 10.2.36 (c) p.~483]{Nic21}, after passing to a subsequence, we may assume that $u_j$ converges in $\L^p(\Omega^{\bullet}(M))$ to some $u$. Using \eqref{eq:elliptic-estimate-P} applied to $u_j-u_k$, we see that $(u_j)$ is Cauchy in $\W^{2,p}(\Omega^{\bullet}(M))$. Since $\W^{2,p}(\Omega^\bullet(M))$ is complete, there exists
$v \in \W^{2,p}(\Omega^\bullet(M))$ such that $
u_j \to v
\quad \text{in } \W^{2,p}(\Omega^\bullet(M))$ By the continuous embedding $
\W^{2,p}(\Omega^\bullet(M))
\hookrightarrow
\L^p(\Omega^\bullet(M))$, we also have $u_j \to v$ in $\L^p(\Omega^\bullet(M))$. On the other hand, by construction of the subsequence,  $u_j \to u$ in $\L^p(\Omega^\bullet(M))$. By uniqueness of the limit in $\L^p(\Omega^\bullet(M))$, we obtain $u=v$. Consequently $u_j \to u$ in $\W^{2,p}(\Omega^\bullet(M))$. Since $(\Id+\Delta_{\HdR})u_j \to 0$ in $\L^p(\Omega^{\bullet}(M))$, we get $(\Id+\Delta_{\HdR})u=0$. By injectivity, $u=0$, contradicting
\[
\norm{u}_{\W^{2,p}(\Omega^{\bullet}(M))}
=
\lim_j \norm{u_j}_{\W^{2,p}(\Omega^{\bullet}(M))}
=
1.
\]
This proves \eqref{eq:elliptic-estimate-no-lower-term}. Consequently, the subspace $\Ran (\Id+\Delta_{\HdR})$ is closed in $\L^p(\Omega^{\bullet}(M))$ by \cite[Theorem 2.5 p.~70]{AbA02}.

It remains to prove that the subspace $\Ran (\Id+\Delta_{\HdR})$ is dense. Suppose that $g \in \L^{p^*}(\Omega^{\bullet}(M))$ annihilate $\Ran (\Id+\Delta_{\HdR})$, where $p^*$ is the conjugate exponent of $p$. Then
\[
\big\la (\Id+\Delta_{\HdR})u,g\big\ra
=0,
\quad
u \in \W^{2,p}(\Omega^{\bullet}(M)).
\]
In particular, this holds for every smooth form $u$. Since $\Id+\Delta_{\HdR}$ is formally self-adjoint by \cite[Proposition 3 (i) p.~540]{GMS98}, we obtain
\[
\big\la u,(\Id+\Delta_{\HdR})g \big\ra
=0,
\quad
u \in \Omega^{\bullet}(M).
\]
Hence $(\Id+\Delta_{\HdR})g=0$ in the sense of distributions. By elliptic regularity, $g$ is smooth. Taking the $\L^2$ inner product with $g$, the same positivity argument as previously gives $g=0$. Thus the annihilator of $\Ran (\Id+\Delta_{\HdR})$ in $\L^{p^*}(\Omega^{\bullet}(M))$ is trivial. Hence the subspace $\Ran (\Id+\Delta_{\HdR})$ is dense in $\L^p(\Omega^{\bullet}(M))$.

Since $\Ran (\Id+\Delta_{\HdR})$ is both closed and dense in $\L^p(\Omega^{\bullet}(M))$, it is equal to $\L^p(\Omega^{\bullet}(M))$. Therefore $\Id+\Delta_{\HdR}$ is bijective.
\end{proof}


\begin{prop}
\label{prop:closure-kodaira-square-dirac}
Let $M$ be a smooth compact Riemannian manifold. Suppose that $1 < p < \infty$. Then $\Delta_{\HdR,p} = D_{p}^2$.
\end{prop}

\begin{proof}
We have $D^2=\Delta_{\HdR}$. Since $D_{p}$ is bisectorial, the operator $D_{p}^2$ is sectorial by \cite[Proposition 10.6.2 (2) p.~448]{HvNVW18}, hence closed. So it is a closed extension of the operator $\Delta_{\HdR}$. Consequently, we have $\Delta_{\HdR,p} \subset D_{p}^2$. 

We prove the reverse inclusion. Now, we prove that $\Id+\Delta_{\HdR,p} \co
\dom \Delta_{\HdR,p}
\to
\L^p(\Omega^{\bullet}(M))$ is onto. Let \(f \in \L^p(\Omega^{\bullet}(M))\). By Lemma~\ref{lemma-I-plus-Kodaira-isomorphism-W2p}, there exists $v \in \W^{2,p}(\Omega^{\bullet}(M))$ such that $(\Id+\Delta_{\HdR})v=f$ in \(\L^p(\Omega^{\bullet}(M))\). Choose a sequence $
(v_j)$ of elements of $\Omega^{\bullet}(M)$ such that $v_j \to v$ in $\W^{2,p}(\Omega^{\bullet}(M))$. Since \(\Delta_{\HdR}\) is a differential operator of order \(2\), it extends by \cite[Proposition 5.1]{Arh26b} to a bounded operator
\[
\Delta_{\HdR} \co \W^{2,p}(\Omega^{\bullet}(M))
\to
\L^p(\Omega^{\bullet}(M)).
\]
We deduce that $
v_j \to v$ and $\Delta_{\HdR}v_j \to \Delta_{\HdR}v$ in \(\L^p(\Omega^{\bullet}(M))\). By the definition of the closure
\(\Delta_{\HdR,p}\), we obtain that $
v \in \dom\Delta_{\HdR,p}$ and $
\Delta_{\HdR,p}v=\Delta_{\HdR}v$. Hence
\[
(\Id+\Delta_{\HdR,p})v
=
(\Id+\Delta_{\HdR})v
=
f.
\]
This shows the surjectivity of the operator
\[
\Id+\Delta_{\HdR,p} \co
\dom \Delta_{\HdR,p}
\to
\L^p(\Omega^{\bullet}(M)).
\]
Since the operator $D_{p}^2$ is sectorial, we have $-1 \in \rho(D_{p}^2)$. Hence $\Id + D_{p}^2$ is injective. Consider some $u \in \dom D_{p}^2$ and let $
f \ov{\mathrm{def}}{=}(\Id+ D_{p}^2)u$. Since $\Id+\Delta_{\HdR,p}$ is surjective, there exists $v \in \dom \Delta_{\HdR,p}$ such that $(\Id+\Delta_{\HdR,p})v=f$. Using the inclusion $\Delta_{\HdR,p} \subset D_{p}^2$, we get
\[
(\Id+ D_{p}^2)v
=(\Id+\Delta_{\HdR,p})v
=f
=(\Id+ D_{p}^2)u.
\]
Since $\I+D_{p}^2$ is injective, we infer that $u=v$. Hence $u \in \dom \Delta_{\HdR,p}$, and $D_{p}^2 u = \Delta_{\HdR,p} u$. Consequently $D_{p}^2 \subset \Delta_{\HdR,p}$. Combining the two inclusions, we obtain $
\Delta_{\HdR,p}
= D_{p}^2$.
\end{proof}

\section{Banach spectral triples from Hodge--Dirac operators on $\L^p(\Omega^\bullet(M))$}
\label{sec-Banach-spectral-triples}

\subsection{Domain of the Hodge--Dirac operator}
\label{sec-domain}

In \cite{NeV17}, the domain of the Hodge--Dirac operator $D_p$ on the space $\L^p(\Omega^\bullet(M))$ is not specified. In this section, we address and complete this missing part. Let $(M,g)$ be a smooth compact Riemannian manifold. Following essentially the definitions of \cite[p.~68]{ISS99} and \cite[Definition 3.2 p.~2079]{Sco95} (where the manifold $M$ is compact), we consider the norm
\begin{equation}
\label{eq:def-W1p-D-norm-again}
\norm{\omega}_{\scr{L}^{1,p}(\Omega^\bullet(M))}
\ov{\mathrm{def}}{=}
\norm{\omega}_{\L^p(\Omega^\bullet(M))}
+\norm{\d\omega}_{\L^p(\Omega^\bullet(M))}
+\norm{\d^*\omega}_{\L^p(\Omega^\bullet(M))}
\end{equation}
and we introduce the completion $\scr{L}^{1,p}(\Omega^\bullet(M))$ of the space $\Omega^{\bullet}(M)$ equipped with this norm.

\begin{equation}
\label{Def-Sobolev-manifold}
\scr{L}^{1,p}(\Omega^\bullet(M))
\ov{\mathrm{def}}{=}
\bigl\{
\omega \in \L^p(\Omega^\bullet(M)) :
\d\omega \in \L^p(\Omega^\bullet(M))
\text{ and }
\d^*\omega \in \L^p(\Omega^\bullet(M))
\bigr\},
\end{equation}
Since the norm \eqref{eq:def-W1p-D-norm-again} dominates the $\L^p$-norm, the identity map on $\Omega^{\bullet}(M)$ extends to a continuous injective map $\scr{L}^{1,p}(\Omega^\bullet(M)) \hookrightarrow \L^p(\Omega^{\bullet}(M))$. Hence, we can identify $\scr{L}^{1,p}(\Omega^\bullet(M))$ with its image in $\L^p(\Omega^{\bullet}(M))$. Under this identification, the space coincides with a space studied in \cite[Chapter I, Section 3]{Gun17}.

The following result describes precisely the domain of the closure of the Hodge--Dirac operator.


\begin{prop}
\label{prop:graph-norm-Hodge-Dirac-Lp}
Let $(M,g)$ be a smooth compact Riemannian manifold. Suppose that $1 < p < \infty$. 
%
Then we have
\begin{equation}
\label{eq:domain-D-graph-precise}
\dom D_p
=
\scr{L}^{1,p}(\Omega^\bullet(M))
\end{equation}
and the graph norm of $D_p$ is equivalent to the Sobolev norm \eqref{eq:def-W1p-D-norm-again}, that is 
\begin{equation}
\label{eq:graph-norm-D-W1p-equivalence}
\norm{\omega}_{\scr{L}^{1,p}(\Omega^\bullet(M))}
\approx_p \norm{\omega}_{\L^p(\Omega^\bullet(M))}+\norm{D_p\omega}_{\L^p(\Omega^\bullet(M))}, \quad \omega \in \scr{L}^{1,p}(\Omega^\bullet(M)).
\end{equation}
\end{prop}

\begin{proof}
By Corollary \ref{cor-Hodge-Dirac-functional-calculus}, the operator $D_p$ is bisectorial on $\L^p(\Omega^\bullet(M))$ and admits a bounded $\H^\infty(\Sigma_\theta^\bi)$ functional calculus for some angle $\theta \in (0,\pi)$. So the abstract theory \cite[Theorem 15.5.2 p.~501]{HvNVW23} of bisectorial operators entails that the operator $\Delta_{\HdR,p} \ov{\mathrm{def}}{=} D_p^2$ is sectorial, the equality of domains
$\dom D_p
=\dom\Delta_{\HdR,p}^{\frac12}$ ,
and the norm equivalence
\begin{equation}
\label{eq:equiv-D-Delta-half-proof}
\norm{D_p\omega}_{\L^p(\Omega^\bullet(M))}
\approx
\bnorm{\Delta_{\HdR,p}^{\frac12}\omega}_{\L^p(\Omega^\bullet(M))},
\qquad
\omega \in \dom D_p.
\end{equation}
For any $\omega \in \Omega^\bullet(M)$, using the triangular inequality we obtain 
\begin{align*}
\MoveEqLeft
\norm{\omega}_{\L^p(\Omega^\bullet(M))} + \norm{D\omega}_{\L^p(\Omega^\bullet(M))}
\leq \norm{\omega}_{\L^p(\Omega^\bullet(M))}
+ \norm{\d\omega+\d^*\omega}_{\L^p(\Omega^\bullet(M))} \\
&\leq \norm{\omega}_{\L^p(\Omega^\bullet(M))}
+\norm{\d\omega}_{\L^p(\Omega^\bullet(M))}
+\norm{\d^*\omega}_{\L^p(\Omega^\bullet(M))} 
\ov{\eqref{eq:def-W1p-D-norm-again}}{=} \norm{\omega}_{\scr{L}^{1,p}(\Omega^\bullet(M))}.
\end{align*}
By Proposition \ref{prop-bounded-Riesz-transforms}, the reduced Riesz transforms, initially defined on $\Delta_{\HdR,p}^{\frac{1}{2}}(\Omega^{\bullet}(M))$ by
\begin{equation}
\label{Dol-Riesz}
R_{\d}\bigl(\Delta_{\HdR,p}^{\frac{1}{2}}\omega\bigr)
\ov{\mathrm{def}}{=}
\d\omega,
\quad \text{and} \quad
R_{\d^*}\bigl(\Delta_{\HdR,p}^{\frac{1}{2}}\omega\bigr)
\ov{\mathrm{def}}{=}
\d^*\omega, \quad \omega \in \Omega^{\bullet}(M),
\end{equation}
extend to bounded operators
$R_{\d},R_{\d^*}
\co
\ovl{\Ran\Delta_{\HdR,p}^{\frac{1}{2}}}
\to
\L^p(\Omega^{\bullet}(M))$. For any form $\omega \in \Omega^\bullet(M)$, we infer that
\begin{align}
\MoveEqLeft
\label{eq:d-dstar-estimate-Riesz}
\norm{\d\omega}_{\L^p(\Omega^\bullet(M))}
+\norm{\d^*\omega}_{\L^p(\Omega^\bullet(M))}
\ov{\eqref{Dol-Riesz}}{=} \bnorm{R_{\d}\bigl(\Delta_{\HdR}^{\frac12}\omega\bigr)}_{\L^p(\Omega^\bullet(M))}
+\bnorm{R_{\d^*}\bigl(\Delta_{\HdR}^{\frac12}\omega\bigr)}_{\L^p(\Omega^\bullet(M))}  \\
&\leq 
\bigl(\norm{R_{\d}}_{\B(\L^p(\Omega^\bullet(M)))}+\norm{R_{\d^*}}_{\B(\L^p(\Omega^\bullet(M)))}\bigr) \bnorm{\Delta_{\HdR}^{\frac12}\omega}_{\L^p(\Omega^\bullet(M))}
\ov{\eqref{eq:equiv-D-Delta-half-proof}}{\lesssim} \norm{D\omega}_{\L^p(\Omega^\bullet(M))}.\nonumber
\end{align}
We deduce that
\begin{align*}
\norm{\omega}_{\scr{L}^{1,p}(\Omega^\bullet(M))}
&\ov{\eqref{eq:def-W1p-D-norm-again}}{=} \norm{\omega}_{\L^p(\Omega^\bullet(M))}
+\norm{\d\omega}_{\L^p(\Omega^\bullet(M))}
+\norm{\d^*\omega}_{\L^p(\Omega^\bullet(M))}\\
&\ov{\eqref{eq:d-dstar-estimate-Riesz}}{\lesssim}
\norm{\omega}_{\L^p(\Omega^\bullet(M))}+\norm{D\omega}_{\L^p(\Omega^\bullet(M))}.
\end{align*}
It is easy to finish the proof using standard arguments.
\end{proof}

\begin{remark} \normalfont
In \cite[Theorem 3 p.~74]{CoL94}, Coulhon and Ledoux constructed a complete non-compact
Riemannian manifold $M$ on which the Riesz transform $\d(-\Delta)^{-\frac{1}{2}}$ is not bounded on $\L^p(M)$ for  all sufficiently large $p$ (or sufficiently small). Later, Coulhon and Duong proved in  \cite[Section 5 p.~1164-1166]{CoD99} that, on the connected sum $
M=\R^n\#\R^n$ with $n \geq 2$, the Riesz transform $\d(-\Delta)^{-\frac{1}{2}}$ is not bounded on $\L^p(M)$ for any $p > n$.  
Furthermore, in \cite[Proposition 6.2 p.~1747]{CoD03} they exhibited a complete non-compact Riemannian manifold $M$ with bounded geometry and polynomial volume growth for $p$ sufficiently large (or sufficiently small). We refer also to \cite{CCH06} and \cite{Loh94} for related results.
\end{remark}

If $M$ is a smooth compact Riemannian manifold, by \cite[Theorem 6.1 p.~68]{ISS99} (see also \cite[Corollary 4.12 p.~2085]{Sco95} for a related result), we have the equivalence
\begin{equation}
\label{equivalences-Sobolev-W1}
\norm{\omega}_{\W^{1,p}(\Omega^\bullet(M))}
\approx \norm{\omega}_{\scr{L}^{1,p}(\Omega^\bullet(M))},
\end{equation}
where $\W^{1,p}(\Omega^\bullet(M))$ is the Sobolev space defined in the usual way by means of a finite atlas on $M$. More precisely, since the manifold $M$ is compact, there exists a finite atlas $\mathfrak{A}=\{(U_1,\varphi_1),(U_2,\varphi_2),\ldots,(U_m,\varphi_m)\}$ such that each chart $\varphi_i \co U_i \to V_i$ is a diffeomorphism onto an open subset $V_i$ of $\R^d$. Let $(\chi_1,\ldots,\chi_m)$ be a smooth partition of unity subordinate to this atlas. For any differential form $\omega \in \Omega^\bullet(M)$, set $\omega_i \ov{\mathrm{def}}{=} \chi_i \omega$ and consider the pull-back $(\varphi_i^{-1})^* \omega_i$ on $V_i$. This differential form can be written as
\[
(\varphi_i^{-1})^* \omega_i(x)
=
\sum_{k=0}^d \sum_{|I|=k} f_{i,I}(x) \d x_I,
\qquad x \in V_i,
\]
where the second sum runs over all multi-indices $I$ of length $k$. We then define
\begin{equation}
\label{Sobolev-charts}
\norm{\omega}_{\W_{\mathfrak{A}}^{1,p}(\Omega^\bullet(M))}
\ov{\mathrm{def}}{=}
\sum_{i=1}^m \sum_{k=0}^d \sum_{|I|=k} \norm{f_{i,I}}_{\W^{1,p}(V_i)}.
\end{equation}
As indicated in \cite[p.~48]{ISS99} (without proof), the Sobolev spaces corresponding to different atlases and subordinate partitions of unity are equivalent. We therefore simply write $\W^{1,p}(\Omega^\bullet(M))$.

\begin{remark} \normalfont
If $M$ is a manifold of bounded geometry, then \cite[Proposition 4.1 p.~54]{Shu92} seems to say that $\dom D_p=\W_p^{1}(\Omega^\bullet(M))$ for some suitable Sobolev space $\W_p^{1}(\Omega^\bullet(M))$.
\end{remark}

%
%

Consider some integer $k \in \{0,\ldots,d-1\}$. Following \cite[Definition 3.1 p.~47]{ISS99}, we say that a differential form $\omega \in \L^1(\Omega^k(M))$ admits a distributional exterior derivative if there exists a locally integrable differential form $\eta \in \L^1(\Omega^{k+1}(M))$ 
such that $\la \eta, \varphi\ra_{\L^2(\Omega^{k+1}(M))}  
= \la \omega,\d^*\varphi \ra_{\L^2(\Omega^{k}(M))}$ 
for all differential form $\varphi \in \Omega^{k+1}(M)$ with compact support. In this case, $\eta$ is uniquely determined and we let $
\d \omega
\ov{\mathrm{def}}{=} \eta$ and we say that $\eta$ is the distributional exterior derivative of $\omega$. We have
\begin{equation}
\label{eq:def-distributional-d}
\la \d \omega, \varphi\ra_{\L^2(\Omega^{k+1}(M))} 
= \la \omega,\d^*\varphi \ra_{\L^2(\Omega^{k}(M))}.
\end{equation}
The notion of distributional exterior coderivative is defined analogously.
Now, we describe the domains the closures of the differential operators $\d$ and $\d^*$ in terms of distributional derivatives.

\begin{cor}
\label{cor-domain-d-dstar-Dp}
Let $(M,g)$ be a smooth compact Riemannian manifold. Suppose that $1 < p < \infty$. Then the operators $
\d,\d^* \co \Omega^\bullet(M) \subset \L^p(\Omega^\bullet(M)) \to \L^p(\Omega^\bullet(M))$ are closable. Moreover, their closures $\d_p$ and $(\d^*)_p$ satisfy
\[
\dom \d_p
=
\{\omega \in \L^p(\Omega^\bullet(M)) : \d\omega \text{ exists in the distributional sense and belongs to } \L^p(\Omega^\bullet(M))\},
\]
\[
\dom (\d^*)_p
=
\{\omega \in \L^p(\Omega^\bullet(M)) : \d^*\omega \text{ exists in the distributional sense and belongs to } \L^p(\Omega^\bullet(M))\}
\]
and
\begin{equation}
\label{last-of-the-last}
\scr{L}^{1,p}(\Omega^\bullet(M))
=
\dom \d_p \cap \dom (\d^*)_p.
\end{equation}
Moreover, under these identifications, the closed operators $\d_p$ and $(\d^*)_p$ act as the corresponding distributional differential operators. Finally, we have 
\begin{equation}
\label{Dp-decompo-en-deux}
D_p
=\d_p+(\d^*)_p.
\end{equation}
as operators on $\L^p(\Omega^{\bullet}(M))$ with common domain $\dom D_p=\dom \d_{p} \cap \dom (\d^*)_p$.
\end{cor}

\begin{proof}
By \cite[Theorem 5.28 p.~168]{Kat76}, the operators $
\d,\d^* \co \Omega^\bullet(M) \subset \L^p(\Omega^\bullet(M)) \to \L^p(\Omega^\bullet(M))$ are closable. We denote by $\d_p$ and $(\d^*)_p$ the closures of $\d$ and $\d^*$. By definition of the closure, if $\omega \in \dom \d_p$ then there exists a sequence $(\omega_n)$ in $\Omega^\bullet(M)$ such that
\[
\omega_n \to \omega
\quad \text{and} \quad
\d\omega_n \to \d_p\omega
\]
in $\L^p(\Omega^\bullet(M))$. Passing to the limit in the distributional identity
\[
\la \d\omega_n,\varphi \ra_{\L^2(\Omega^\bullet(M))}
\ov{\eqref{def-codiff}}{=} \la \omega_n,\d^*\varphi \ra_{\L^2(\Omega^\bullet(M))}
\]
we obtain $\la \d_p\omega,\varphi \ra_{\L^2(\Omega^\bullet(M))}
= \la \omega,\d^*\varphi \ra_{\L^2(\Omega^\bullet(M))}$. This proves that $\d\omega$ exists in the distributional sense and equals $\d_p\omega$. Thus
\[
\dom \d_p
\subset
\{\omega \in \L^p(\Omega^\bullet(M)) : \d\omega \text{ exists distributionally and belongs to } \L^p(\Omega^\bullet(M))\}.
\]
Conversely, suppose that $\omega \in \L^p(\Omega^\bullet(M))$ and that $\d\omega$ exists in the distributional sense and belongs to the space $\L^p(\Omega^\bullet(M))$. Since the manifold $M$ is compact, by the density \cite[Corollary 3.6 p.~48]{ISS99} or \cite[Theorem I.19 p.~14]{Gun17} of the subspace $\Omega^\bullet(M)$ there exists a sequence $(\omega_n)$ of elements in $\Omega^\bullet(M)$ such that $\omega_n \to \omega$ and $\d\omega_n \to \d\omega$ in $\L^p(\Omega^\bullet(M))$. So $\omega \in \dom \d_p$ and $\d_p\omega=\d\omega$. Hence $\dom \d_p$ is exactly the distributional domain of $\d$. We can use the same argument for $(\d^*)_p$.

Now, we identify the intersection of these two domains with $\scr{L}^{1,p}(\Omega^\bullet(M))$. Consider some $
u \in \dom \d_p \cap \dom (\d^*)_p$. By the preceding domain identifications, both $\d u$ and
$\d^*u$ exist in the distributional sense and belong to $\L^p(\Omega^{\bullet}(M))$. Now, we use \cite[Theorem I.19 p.~14]{Gun17} with the finite family of first-order differential operators $\{\d_,\d^*\}$. So, there exists a sequence $(u_n)$ in $\Omega^{\bullet}(M)$ such that $u_n \to u$, $\d u_n \to \d u$, $\d^* u_n \to \d^* u$ in $\L^p(\Omega^{\bullet}(M))$. Therefore $u \in \scr{L}^{1,p}(\Omega^\bullet(M))$. The converse inclusion is obvious from the definition of $\scr{L}^{1,p}(\Omega^\bullet(M))$ and the closedness of the operators $\d_p$ and $(\d^*)_p$. Thus $
\scr{L}^{1,p}(\Omega^\bullet(M))
=
\dom \d_p \cap \dom (\d^*)_p$.

Now, we prove the last assertion. By Proposition~\ref{prop:graph-norm-Hodge-Dirac-Lp}, we deduce the equalities $
\dom D_{p}
\ov{\eqref{eq:domain-D-graph-precise}}{=}
\scr{L}^{1,p}(\Omega^\bullet(M))
\ov{\eqref{last-of-the-last}}{=} \dom \d_{p}
\cap
\dom (\d^*)_{p}$. Consider some $u \in \dom D_{p}$. Since $
\dom D_{p}
\ov{\eqref{eq:domain-D-graph-precise}}{=} \scr{L}^{1,p}(\Omega^\bullet(M))$, there exists a sequence $(u_n)$ in $\Omega^{\bullet}(M)$ such that $
u_n \to u$, $
\d u_n \to \d_{p}u$, 
$\d^* u_n \to (\d^*)_{p}u$
in the space $\L^p(\Omega^{\bullet}(M))$. We deduce that
\[
D u_n
\ov{\eqref{def-Hodge-Dirac}}{=} \d u_n+\d^*u_n
\to \d_{p}u+(\d^*)_{p}u
\]
in $\L^p(\Omega^{\bullet}(M))$. Since $D_{p}$ is the closure of the unbounded operator $D$, we conclude that $
D_{p}u=\d_{p}u+(\d^*)_{p}u$. The equality of operators is a consequence of the equality of domains.

\end{proof}

\begin{remark} \normalfont
The spaces $\dom \d_p$ and $\dom (\d^*)_p$ are considered in \cite[pp.~49-50]{ISS99}.
\end{remark}

\subsection{Commutators of the Hodge--Dirac operators with multiplication operators}

In this section, we describe the commutators of the Hodge--Dirac operator $D_p$ with multiplication operators. We will use the following observation proved in \cite{Arh26a}.

\begin{lemma}
\label{lem:matrix-multiplication-Lp}
Let $(\Omega,\mu)$ be a measure space and let $1 \leq p < \infty$.  
Let $A \colon \Omega \to \M_N$ be a strongly measurable function such that $
\alpha
\ov{\mathrm{def}}{=}
\esssup_{x \in \Omega} \norm{A(x)}_{\M_N}
< \infty$. 
Define the multiplication operator
\[
M_A \colon \L^p(\Omega,\mathbb{C}^N) \to \L^p(\Omega,\mathbb{C}^N), 
\qquad
(M_A u)(x) \ov{\mathrm{def}}{=} A(x)u(x).
\]
Then $M_A$ is bounded and its operator norm is given by
\begin{equation}
\label{eq:norm-matrix-mult}
\norm{M_A}_{\L^p(\Omega,\mathbb{C}^N)\to \L^p(\Omega,\mathbb{C}^N)}
=
\esssup_{x \in \Omega} \norm{A(x)}_{\M_N}.
\end{equation}
\end{lemma}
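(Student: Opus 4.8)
The plan is to establish the two inequalities $\norm{M_A}_{\L^p(\Omega,\mathbb{C}^N)\to\L^p(\Omega,\mathbb{C}^N)} \leq \alpha$ and $\norm{M_A}_{\L^p(\Omega,\mathbb{C}^N)\to\L^p(\Omega,\mathbb{C}^N)} \geq \alpha$ separately, the second being the only one that requires a little care.

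\textit{Upper bound and well-definedness.} First I would observe that for $u \in \L^p(\Omega,\mathbb{C}^N)$ the function $x \mapsto A(x)u(x)$ is strongly measurable, being the composition of the strongly measurable map $x \mapsto (A(x),u(x))$ with the continuous multiplication $\M_N \times \mathbb{C}^N \to \mathbb{C}^N$. Pointwise almost everywhere one has $\norm{A(x)u(x)}_{\mathbb{C}^N} \leq \norm{A(x)}_{\M_N}\norm{u(x)}_{\mathbb{C}^N} \leq \alpha\norm{u(x)}_{\mathbb{C}^N}$, since $\norm{\cdot}_{\M_N}$ is the operator norm on $\mathbb{C}^N$. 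Raising to the power $p$ and integrating over $\Omega$ gives $\norm{M_A u}_{\L^p} \leq \alpha\norm{u}_{\L^p}$, so $M_A$ is a well-defined bounded operator with $\norm{M_A} \leq \alpha$ (the case $\alpha = 0$, forcing $A = 0$ a.e. and $M_A = 0$, is trivial, so assume $\alpha > 0$).

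\textit{Lower bound.} Fix $\epsilon \in (0,\alpha)$. The delicate point is to exhibit, in a measurable fashion, directions along which $A(x)$ is almost norm-attaining; I would avoid a measurable selection argument by fixing once and for all a countable dense subset $\{v_j\}_{j\geq 1}$ of the unit sphere of $\mathbb{C}^N$ and using that, by continuity of $v \mapsto \norm{A(x)v}_{\mathbb{C}^N}$ and density, $\norm{A(x)}_{\M_N} = \sup_{j\geq 1}\norm{A(x)v_j}_{\mathbb{C}^N}$ for every $x$. Consequently the set $\{x : \norm{A(x)}_{\M_N} > \alpha - \epsilon\}$, which has positive measure by definition of the essential supremum, equals $\bigcup_{j\geq 1} E_j$ where $E_j \ov{\mathrm{def}}{=} \{x : \norm{A(x)v_j}_{\mathbb{C}^N} > \alpha - \epsilon\}$ is measurable, so $\mu(E_{j_0}) > 0$ for some index $j_0$. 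Choosing a measurable subset $F \subseteq E_{j_0}$ with $0 < \mu(F) < \infty$ (available in the semifinite setting relevant here, in particular for the Riemannian volume measure) and testing against $u \ov{\mathrm{def}}{=} \mathbf{1}_F\, v_{j_0} \in \L^p(\Omega,\mathbb{C}^N)$, one computes $\norm{M_A u}_{\L^p}^p = \int_F \norm{A(x)v_{j_0}}_{\mathbb{C}^N}^p \,\d\mu \geq (\alpha - \epsilon)^p \mu(F) = (\alpha - \epsilon)^p\norm{u}_{\L^p}^p$. Hence $\norm{M_A} \geq \alpha - \epsilon$, and letting $\epsilon \downarrow 0$ yields $\norm{M_A} \geq \alpha$; combined with the upper bound this proves \eqref{eq:norm-matrix-mult}.

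The only genuine obstacle is the measurability issue in the lower bound, namely selecting near-extremal unit vectors for the matrices $A(x)$; replacing such a selection by the fixed countable dense family $\{v_j\}$ and using the supremum description of the operator norm sidesteps it entirely. Everything else reduces to the pointwise submultiplicativity of the matrix norm together with the definition of the essential supremum.
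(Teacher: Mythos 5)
The paper does not prove Lemma \ref{lem:matrix-multiplication-Lp} itself; it cites \cite{Arh26a} for it, so there is no in-paper argument to compare against. Your proof is correct and uses the standard two-sided estimate. The upper bound is immediate from pointwise submultiplicativity. For the lower bound, your device of replacing a measurable selection of near-maximizing vectors by a fixed countable dense subset $\{v_j\}$ of the unit sphere, so that $\norm{A(x)}_{\M_N} = \sup_j \norm{A(x)v_j}_{\mathbb{C}^N}$ and the sets $E_j$ are manifestly measurable, is exactly the right way to avoid the selection issue, and the pigeonhole step giving $\mu(E_{j_0})>0$ is valid because a countable union of null sets is null. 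One remark: as stated for an arbitrary measure space the equality \eqref{eq:norm-matrix-mult} is not quite true (a single atom of infinite mass gives $\L^p=\{0\}$ while $\alpha$ may be positive), so the semifiniteness you invoke to extract a subset $F$ of finite positive measure is genuinely needed and is an implicit hypothesis; you flag this correctly, and it is harmless here since the measure in all applications is the Riemannian volume of a compact manifold, which is finite.
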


Note that the formula \eqref{eq:commutator-D-Mf} is folklore but we are unable to locate a convincing reference. So, we give a complete proof.

\begin{prop}
\label{prop:commutator-Hodge-Dirac}
Let $(M,g)$ be a smooth compact Riemannian manifold of dimension $d$. For any function $f \in \C^\infty(M)$ and any differential form $\omega \in \Omega^\bullet(M)$, we have
\begin{equation}
\label{eq:commutator-D-Mf}
[D,M_f](\omega)
=\d f \wedge \omega - i_{\nabla f}\omega.
\end{equation}
Suppose that $1 \leq p <\infty$. If $f \in \C^\infty(M)$ 
 then the commutator $[D,M_f]$ extends to a bounded operator on the Banach space $\L^p(\Omega^\bullet(M))$ with
\begin{equation}
\label{estimation-commutator-HD}
\norm{[D,M_f]}_{\L^p(\Omega^\bullet(M)) \to \L^p(\Omega^\bullet(M))}
= \norm{\d f}_{\L^\infty(\Omega^1(M))}.
\end{equation}
\end{prop}

\begin{proof}
Let $f \in \C^\infty(M)$. We split the commutator into two parts:
\[
[D,M_f]
\ov{\eqref{def-Hodge-Dirac}}{=} [\d +\d^*,M_f]
=[\d,M_f]+[\d^*,M_f].
\]
For any differential form $\omega \in \Omega^{\bullet}(M)$, we have
\begin{align}
\label{inter-45}
[\d,M_f](\omega)
&=\d(f\omega)-f\d\omega
\ov{\eqref{Leibniz-manifold-d}}{=} \d f \wedge \omega.
\end{align}
Now, we compute $[\d^*,M_f]$. Let $\omega \in \Omega^{\bullet}(M)$ and $\eta \in \Omega^{\bullet}(M)$. Using the definition of the $\L^2$-adjoint we obtain
\begin{align*}
\big\la \d^*(f\omega),\eta \big\ra_{\L^2(\Omega^{\bullet}(M))}
&=\la f\omega,\d\eta \ra_{\L^2(\Omega^{\bullet}(M))}
=\la \omega, f\d\eta \ra_{\L^2(\Omega^{\bullet}(M))}.
\end{align*}
By the Leibniz rule, we have
\[
\d(f\eta)
\ov{\eqref{Leibniz-manifold-d}}{=} \d f \wedge \eta + f\d\eta.
\]
Hence $f\d \eta = \d(f\eta)-\d f \wedge \eta$. We deduce that
\begin{align}
\label{first-second}
\big\la \d^*(f\omega),\eta \big\ra_{\L^2(\Omega^{\bullet}(M))}
&=\big\la \omega,\d(f\eta) \big\ra_{\L^2(\Omega^{\bullet}(M))}
-\la \omega,\d f \wedge \eta \ra_{\L^2(\Omega^{\bullet}(M))}.
\end{align}
For the first term, by definition of $\d^*$, we see that
\begin{equation}
\label{first-term}
\big\la \omega,\d(f\eta) \big\ra_{\L^2(\Omega^{\bullet}(M))}
=\la \d^*\omega,f\eta \ra_{\L^2(\Omega^{\bullet}(M))}
=\la f\d^*\omega,\eta \ra_{\L^2(\Omega^{\bullet}(M))}.
\end{equation}
For the second term, we use \eqref{eq:wedge-interior-adjoint} with $\alpha = \d f$:
\begin{equation}
\label{second-term}
\big\la \omega,\d f \wedge \eta \big\ra_{\L^2(\Omega^{\bullet}(M))}
\ov{\eqref{eq:wedge-interior-adjoint}}{=} \big\la i_{(\d f)^\sharp}\omega,\eta \big\ra_{\L^2(\Omega^{\bullet}(M))}
\ov{\eqref{sharp}}{=} \big\la i_{\nabla f}\omega,\eta \big\ra_{\L^2(\Omega^{\bullet}(M))}.
\end{equation}
Consequently, we obtain 
\[
\big\la \d^*(f\omega),\eta \big\ra_{\L^2(\Omega^{\bullet}(M))}
\ov{\eqref{first-second} \eqref{first-term} \eqref{second-term}}{=} \big\la f\d^*\omega,\eta \big\ra_{\L^2(\Omega^{\bullet}(M))}
-\big\la i_{\nabla f}\omega,\eta \big\ra_{\L^2(\Omega^{\bullet}(M))}.
\]
Since this holds for all $\eta \in \Omega^{\bullet}(M)$, 
we deduce that
\begin{equation}
\label{eq:commutator-dbar-star-Mf}
[\d^*,M_f](\omega)
=\d^*(f\omega)-f\d^*\omega
= - i_{\nabla f}\omega.
\end{equation}
Combining this with \eqref{inter-45}, we obtain
\begin{equation}
\label{commutator-proof}
[D,M_f](\omega)
=(\d f \wedge \omega - i_{\nabla f}\omega).
\end{equation}
This is the desired formula \eqref{eq:commutator-D-Mf}.


Now, we prove the second part of the proposition. Fix $x \in M$. Using the musical isomorphisms, we can identify $\Lambda^k \mathrm{T}_x^*M$ with the exterior power $\Lambda^k \mathrm{T}_xM$, and we equip $\Lambda^k \mathrm{T}_x^*M$ with the inner product induced by $g_x$.

Following \cite[Section 3.4, (3.63), (3.67)]{VaR19}, for any $v \in \mathrm{T}_xM$ we introduce the left exterior multiplication
$E(v) \co \Lambda \mathrm{T}_xM \to \Lambda \mathrm{T}_xM$ by $v$ defined by
\[
E(v)(A)
\ov{\mathrm{def}}{=} v \wedge A,
\]
and for any covector $\beta \in \mathrm{T}_x^*M$ we consider the left contraction
$I(\beta) \co \Lambda \mathrm{T}_xM \to \Lambda \mathrm{T}_xM $ by $\beta$ defined by
\[
I(\beta)(A)
\ov{\mathrm{def}}{=} \beta \lrcorner A.
\]
If we denote by $\mathrm{T}_xM \to \mathrm{T}_x^*M$, $v \mapsto v^\flat$ the map defined by $v^\flat(w)=g_x(v,w)$, then $I(v^\flat)$ is precisely the interior product $i_v$ acting on differential forms.

In \cite[(3.76)–(3.77) p.~75]{VaR19}, the authors introduce maps $\gamma_+(v) ,\gamma_-(v) \co \mathrm{T}_xM \to \mathrm{End}(\Lambda \mathrm{T}_xM)$ defined by
\begin{equation}
\label{eq:def-Clifford-mappings}
\gamma_+(v)
\ov{\mathrm{def}}{=} E(v) + I(v^\flat),
\qquad
\gamma_-(v)
\ov{\mathrm{def}}{=} E(v) - I(v^\flat),
\quad v \in \mathrm{T}_xM,
\end{equation}
and prove in \cite[Theorem 3.6 p.~76]{VaR19} that for all $u,v \in \mathrm{T}_xM$ these operators satisfy the Clifford relations
\begin{equation}
\label{eq:Clifford-relations-Gamma}
\gamma_+(v)\gamma_+(u) + \gamma_+(u)\gamma_+(v)
=2 g_x(v,u)\Id,
\qquad
\gamma_-(v)\gamma_-(u) + \gamma_-(u)\gamma_-(v)
=-2 g_x(v,u)\Id.
\end{equation}
In particular, taking $u=v$ and dividing by $2$, we obtain
\begin{equation}
\label{eq:Gamma-square}
\gamma_+(v)^2
=g_x(v,v)\Id,
\qquad
\gamma_-(v)^2
=-g_x(v,v)\Id,
\quad v \in \mathrm{T}_xM.
\end{equation}
In our notation, for each covector $\alpha \in \mathrm{T}_x^*M$ we set $v \ov{\mathrm{def}}{=} \alpha^\sharp \in \mathrm{T}_xM$. The endomorphism
$
C_\alpha \co \Lambda \mathrm{T}_x^*M \to \Lambda \mathrm{T}_x^*M$, $\omega \mapsto \alpha \wedge \omega - i_{\alpha^\sharp}\omega$ corresponds, under the above identification, exactly to the operator $\gamma_-(v)$. Hence, by \eqref{eq:Gamma-square} we have
\begin{equation}
\label{eq:Calpha-square}
C_\alpha^2
=\gamma_-(v)^2
\ov{\eqref{eq:Gamma-square}}{=} -g_x(v,v)\Id_{\Lambda \mathrm{T}_x^*M}
=-|\alpha|_{\Lambda^1 \mathrm{T}_x^*M}^2 \Id_{\Lambda \mathrm{T}_x^*M}.
\end{equation}
On the other hand, the adjointness relation between wedge and interior product \eqref{eq:wedge-interior-adjoint} implies that, for any $\eta,\omega \in \Lambda \mathrm{T}_x^*M$,
\begin{align*}
\la C_\alpha \eta,\omega \ra_x
&=\la \alpha \wedge \eta,\omega \ra_x - \la i_{\alpha^\sharp}\eta,\omega \ra_x
\ov{\eqref{eq:wedge-interior-adjoint}}{=} \la \eta,i_{\alpha^\sharp}\omega \ra_x - \la \eta,\alpha \wedge \omega \ra_x \\
&=-\big\la \eta,(\alpha \wedge \omega - i_{\alpha^\sharp}\omega) \big\ra_x
=-\la \eta,C_\alpha \omega \ra_x.
\end{align*}
Thus $C_\alpha$ is skew-adjoint on the finite-dimensional Hilbert space $\Lambda \mathrm{T}_x^*M$, i.e.
\begin{equation}
\label{eq:Calpha-skew-adjoint}
C_\alpha^*
=-C_\alpha.
\end{equation}
Combining \eqref{eq:Calpha-square} and \eqref{eq:Calpha-skew-adjoint}, we obtain
\[
C_\alpha^* C_\alpha
\ov{\eqref{eq:Calpha-skew-adjoint}}{=} -C_\alpha^2
\ov{\eqref{eq:Calpha-square}}{=} |\alpha|_{\Lambda^1 \mathrm{T}_x^*M}^2 \Id_{\Lambda \mathrm{T}_x^*M}.
\]
Hence, we infer that
\begin{equation}
\label{eq:Calpha-norm}
\norm{C_\alpha}_{\Lambda \mathrm{T}_x^*M \to \Lambda \mathrm{T}_x^*M}
=|\alpha|_{\Lambda^1 \mathrm{T}_x^*M}.
\end{equation}

Now, we apply this fibrewise description to the commutator. For each $x \in M$ and any function $f \in \C^\infty(M)$, consider the element $\alpha_x \ov{\mathrm{def}}{=} \d f(x)$ of $\mathrm{T}_x^*M$. By \eqref{eq:commutator-D-Mf} and the previous identification, for any $\omega \in \Omega^\bullet(M)$ we have
\begin{equation}
\label{inter-A-525}
[D,M_f](\omega)(x)
\ov{\eqref{eq:commutator-D-Mf}}{=} \d f(x) \wedge \omega(x) - i_{(\nabla f(x))}\omega(x)
=C_{\alpha_x}(\omega(x)).
\end{equation}
Using \eqref{eq:Calpha-norm} with $\alpha=\alpha_x$, we obtain
\begin{align}
\MoveEqLeft
\label{ref-88UY}
|[D,M_f](\omega)(x)|_{\Lambda^\bullet \mathrm{T}_x^*M}
\ov{\eqref{inter-A-525}}{=} |C_{\alpha_x}(\omega(x))|_{\Lambda^\bullet \mathrm{T}_x^*M}
\ov{\eqref{eq:Calpha-norm}}{\leq} |\alpha_x|_{\Lambda^1 \mathrm{T}_x^*M} |\omega(x)|_{\Lambda^\bullet \mathrm{T}_x^*M} \\
&=|\d f(x)|_{\Lambda^1 \mathrm{T}_x^*M} |\omega(x)|_{\Lambda^\bullet \mathrm{T}_x^*M}
\leq \norm{\d f}_{\L^\infty(\Omega^1(M))} |\omega(x)|_{\Lambda^\bullet \mathrm{T}_x^*M}.  \nonumber
\end{align}
Taking the $\L^p$-norm, we deduce that
\begin{align*}
\MoveEqLeft
\norm{[D,M_f](\omega)}_{\L^p(\Omega^\bullet(M))}
\ov{\eqref{norm-Lp-Df}}{=}
\bigg(\int_M |[D,M_f](\omega)(x)|_{\Lambda^\bullet \mathrm{T}_x^*M}^{p} \d \mu_g(x)\bigg)^{\frac{1}{p}} \\
&\ov{\eqref{ref-88UY}}{\leq} \norm{\d f}_{\L^\infty(\Omega^1(M))}
\bigg(\int_M |\omega(x)|_{\Lambda^\bullet \mathrm{T}_x^*M}^{p} \d \mu_g(x)\bigg)^{\frac{1}{p}}
\ov{\eqref{norm-Lp-Df}}{=}
\norm{\d f}_{\L^\infty(\Omega^1(M))} \norm{\omega}_{\L^p(\Omega^\bullet(M))}.
\end{align*}
Hence $[D,M_f]$ extends by density to a bounded operator on $\L^p(\Omega^\bullet(M))$ and satisfies
\[
\norm{[D,M_f]}_{\L^p(\Omega^\bullet(M)) \to \L^p(\Omega^\bullet(M))}
\leq \norm{\d f}_{\L^\infty(\Omega^1(M))}.
\]

Fix a local orthonormal frame of the bundle $\Omega^\bullet(M)$ on a coordinate chart $U \subset M$. By \cite[Remark I.17]{Gun17}, this yields an isometric identification of the Banach space $\L^p(\Omega^\bullet(U))$ with the Bochner space $\L^p(U,\mathbb{C}^N)$, where $N \ov{\mathrm{def}}{=} \dim \Lambda \mathrm{T}_x^*M$ does not depend on $x$. In this trivialization, the family of endomorphisms $C_{\alpha_x}$ is represented by a strongly measurable matrix-valued function $
A \co U \to \M_N$, $x \mapsto C_{\alpha_x}$, and the operator $[D,M_f]$ restricted to $\L^p(\Omega^\bullet(U))$ corresponds exactly to the multiplication operator
\[
M_A \co \L^p(U,\mathbb{C}^N) \to \L^p(U,\mathbb{C}^N),
\qquad
(M_A u)(x) \ov{\mathrm{def}}{=} A(x)u(x).
\]
For any $x \in U$, we have
\[
\norm{A(x)}_{\M_N}
=\norm{C_{\alpha_x}}_{\Lambda \mathrm{T}_x^*M \to \Lambda \mathrm{T}_x^*M}
\ov{\eqref{eq:Calpha-norm}}{=} |\d f(x)|_{\Lambda^1 \mathrm{T}_x^*M}.
\]
Hence
\[
\esssup_{x \in U} \norm{A(x)}_{\M_N}
=\norm{\d f}_{\L^\infty(U)}.
\]
Applying Lemma~\ref{lem:matrix-multiplication-Lp} with $\Omega=U$ and this matrix field $A$, we obtain
\begin{equation}
\label{eq:norm-local-commutator}
\norm{[D,M_f]}_{\L^p(\Omega^\bullet(U)) \to \L^p(\Omega^\bullet(U))}
= \norm{M_A}_{\L^p(U,\mathbb{C}^N) \to \L^p(U,\mathbb{C}^N)}
\ov{\eqref{eq:norm-matrix-mult}}{=} \norm{\d f}_{\L^\infty(\Omega^1(U))}.
\end{equation}
We denote by
\[
T \co \L^p(\Omega^\bullet(U)) \to \L^p(\Omega^\bullet(U))
\]
the local version of the commutator $[D,M_f]$ acting on $\L^p(\Omega^\bullet(U))$. Then \eqref{eq:norm-local-commutator} tells us that
\begin{equation}
\label{eq:norm-Tj}
\norm{T}_{\L^p(\Omega^\bullet(U)) \to \L^p(\Omega^\bullet(U))}
=\norm{\d f}_{\L^\infty((\Omega^1(U)))}.
\end{equation}
Next, for each $j$, consider the isometric embedding
\[
J \co \L^p(\Omega^\bullet(U)) \to \L^p(\Omega^\bullet(M)),
\qquad
(J\omega)(x) \ov{\mathrm{def}}{=}
\begin{cases}
\omega(x)&\text{if }x \in U,\\
0&\text{if }x \in M\setminus U.
\end{cases}
\]
Since the commutator $[D,M_f]$ is given pointwise by the formula \eqref{eq:commutator-D-Mf}, it follows that for any $\omega \in \Omega^\bullet(M)$ supported in $U$ we have
\[
[D,M_f](\omega)(x)
=
\begin{cases}
T(\omega|_{U})(x)&\text{if }x \in U,\\
0&\text{if }x \in M \setminus U
\end{cases}.
\]
In particular, for every $\eta \in \L^p(\Omega^\bullet(U))$ we obtain
\[
\norm{[D,M_f](J\eta)}_{\L^p(\Omega^\bullet(M))}
=\norm{T\eta}_{\L^p(\Omega^\bullet(U))},
\qquad
\norm{J\eta}_{\L^p(\Omega^\bullet(M))}
=\norm{\eta}_{\L^p(\Omega^\bullet(U))}.
\]
Hence
\begin{align*}
\MoveEqLeft
\norm{[D,M_f]}_{\L^p(\Omega^\bullet(M)) \to \L^p(\Omega^\bullet(M))}
\geq \sup_{\eta \neq 0} \frac{\norm{[D,M_f](J\eta)}_{\L^p(\Omega^\bullet(M))}}{\norm{J\eta}_{\L^p(\Omega^\bullet(M))}}\\
&= \sup_{\eta \neq 0} \frac{\norm{T\eta}_{\L^p(\Omega^\bullet(U))}}{\norm{\eta}_{\L^p(\Omega^\bullet(U))}}
=\norm{T}_{\L^p(\Omega^\bullet(U)) \to \L^p(\Omega^\bullet(U))}
\ov{\eqref{eq:norm-Tj}}{=} \norm{\d f}_{\L^\infty(\Omega^1(U))}.
\end{align*}
Observing that $\norm{\d f}_{\L^\infty(\Omega^1(M))} =\sup_U \norm{\d f}_{\L^\infty(\Omega^1(U))}$, we obtain the lower bound
\begin{equation}
\label{eq:global-lower-bound}
\norm{[D,M_f]}_{\L^p(\Omega^\bullet(M)) \to \L^p(\Omega^\bullet(M))}
\geq \norm{\d f}_{\L^\infty(\Omega^1(M))}.
\end{equation}
\end{proof}

\subsection{Rellich-Kondrachov theorem for $\L^p$-spaces of differential forms}

We need a Rellich-Kondrachov theorem for spaces of differential forms and a quantitative form for estimating the dimension of the spectral triples. Recall that the approximation numbers \cite[Definition I.d.14 p.~69]{Kon86} of a bounded linear operator $T \co X \to Y$ are defined by 
\begin{equation}
\label{def-approximation-numbers}
a_n(T) 
\ov{\mathrm{def}}{=} \inf\{\norm{T-R} : R \co X \to Y,\,  \rank R < n \}, \quad n \geq 1.
\end{equation}
These ``singular numbers'' quantify how well the map can be approximated by finite-dimensional linear operators. We refer to \cite{Kon86}, \cite{Pie80}, \cite{Pie87} and \cite{Pie07} for more information on singular numbers and the associated ideals. We will use the following result \cite[Theorem 6.5 p.~295]{EdE18}. 

\begin{thm}
\label{Th-Edmunds}
Let $\Omega$ be a bounded open set in $\R^d$ such that $\partial \Omega$ is minimally smooth. Suppose that $1 \leq p <\infty$. Then the $n$th approximation number $a_n(J)$ of the (compact) embedding map $J \co \W^{1,p}(\Omega) \to \L^p(\Omega)$ satisfies
\begin{equation}
\label{estimations-approx-Sobolev}
a_n(J)
=O(\tfrac{1}{n^{ \frac{1}{d}}})
\end{equation}
as $n \to \infty$. 
\end{thm}

We will also use the following observation from \cite{Arh26a}.

\begin{lemma}
\label{lemma:approx-finite-direct-sum}
Suppose that $1 \leq p \leq \infty$. Consider two $\ell^p$-direct sums $
X \ov{\mathrm{def}}{=} \bigoplus_{k=1}^N X_k$ and $Y \ov{\mathrm{def}}{=} \bigoplus_{k=1}^N Y_k$ of Banach spaces and a block diagonal operator $T \ov{\mathrm{def}}{=} \bigoplus_{k=1}^N T_k \co X \to Y$, where each $T_k \co X_k \to Y_k$ is a bounded linear operator. Then for every integer $n \geq 1$ we have
\begin{equation}
\label{eq:an-direct-sum}
a_{Nn}(T)
\leq \max_{1 \leq k \leq N} a_n(T_k).
\end{equation}
\end{lemma}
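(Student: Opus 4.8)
The plan is to construct, for $a_{Nn}(T)$, a competing finite-rank operator block by block, exploiting the elementary fact that the operator norm of a block diagonal map between $\ell^p$-direct sums equals the maximum of the norms of its diagonal blocks.

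First I would establish that norm identity. Given $x=(x_k)_{k=1}^N \in X$, one has $Tx=(T_kx_k)_{k=1}^N$, so for $1 \leq p < \infty$
\[
\norm{Tx}_Y^p
= \sum_{k=1}^N \norm{T_k x_k}_{Y_k}^p
\leq \Big(\max_{1 \leq k \leq N} \norm{T_k}\Big)^p \sum_{k=1}^N \norm{x_k}_{X_k}^p
= \Big(\max_{1 \leq k \leq N} \norm{T_k}\Big)^p \norm{x}_X^p,
\]
and the reverse inequality follows by testing on vectors supported in a single block; the case $p=\infty$ is identical with the sums replaced by suprema. Hence $\norm{T}_{X \to Y} = \max_{1 \leq k \leq N} \norm{T_k}_{X_k \to Y_k}$, and the same holds verbatim for any block diagonal operator between these $\ell^p$-sums.

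Then I would fix $\epsi > 0$ and, for each $k$, invoke the definition \eqref{def-approximation-numbers} of the approximation number to choose a bounded operator $R_k \co X_k \to Y_k$ with $\rank R_k < n$ and $\norm{T_k - R_k}_{X_k \to Y_k} < a_n(T_k) + \epsi$. Setting $R \ov{\mathrm{def}}{=} \bigoplus_{k=1}^N R_k \co X \to Y$, one has $\rank R \leq \sum_{k=1}^N \rank R_k \leq N(n-1) = Nn - N < Nn$, so $R$ is admissible in the infimum defining $a_{Nn}(T)$. Applying the norm identity of the previous step to the block diagonal operator $T - R = \bigoplus_{k=1}^N (T_k - R_k)$ gives
\[
a_{Nn}(T)
\leq \norm{T-R}_{X \to Y}
= \max_{1 \leq k \leq N} \norm{T_k - R_k}_{X_k \to Y_k}
< \max_{1 \leq k \leq N} a_n(T_k) + \epsi,
\]
and letting $\epsi \to 0$ yields \eqref{eq:an-direct-sum}.

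There is no serious obstacle here: the only point requiring a little care is the norm identity for block diagonal operators between $\ell^p$-direct sums — checking both inequalities and separating $p < \infty$ from $p = \infty$ — together with the bookkeeping observation that a direct sum of $N$ operators, each of rank at most $n-1$, has rank at most $Nn - N \leq Nn - 1$, which is precisely what makes $R$ an admissible competitor in the infimum for $a_{Nn}(T)$.
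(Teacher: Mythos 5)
Your proof is correct and is the natural argument: establish the norm identity $\norm{\bigoplus_k S_k} = \max_k \norm{S_k}$ for block diagonal operators between $\ell^p$-direct sums, pick near-optimal finite-rank approximants $R_k$ blockwise with $\rank R_k \leq n-1$, and observe that $\rank\bigl(\bigoplus_k R_k\bigr) \leq N(n-1) < Nn$ so the direct sum is admissible in the infimum defining $a_{Nn}(T)$. The paper itself does not prove this lemma but only cites it from an external reference, so there is no in-paper proof to compare against; your argument is the standard one and is complete, including the small bookkeeping point that $N(n-1) < Nn$ for every $N \geq 1$, which is what makes the rank bound work.
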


We will use the next result.

\begin{prop}
\label{prop-equivalence}
Let $(M,g)$ be a smooth compact Riemannian manifold of dimension $d$ and let $(U,\varphi)$ be a coordinate chart with $\ovl{U}$ compact. For each $k \in \{0,\dots,d\}$, consider on $\Lambda^k \mathrm{T}^*U$ the Euclidean norm $|\cdot|_{\mathrm{eucl}}$ induced by the coordinates and the Riemannian norm $|\cdot|_{g}$ induced by $g$.
Then there exists a constant $C \geq 1$ such that for all $x\in U$ and all $\eta\in \Lambda^k \mathrm{T}_x^*M$,
\[
C^{-1}|\eta|_{\mathrm{eucl}}
\leq
|\eta|_{g}
\leq
C|\eta|_{\mathrm{eucl}}.
\]
Moreover, $C$ can be chosen uniformly in $x\in U$.
\end{prop}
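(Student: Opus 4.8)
The plan is to diagonalise the problem by writing both norms in the coordinate basis of $\Lambda^k \mathrm{T}^*_x M$ and reducing to a uniform control of the eigenvalues of a continuous, positive-definite matrix field on the compact set $\overline{U}$. Write $\varphi = (x^1,\dots,x^d)$, so that for each $x \in U$ the covectors $\d x^1|_x,\dots,\d x^d|_x$ form a basis of $\mathrm{T}^*_x M$ and, for each $k$, the wedge products $\d x^I := \d x^{i_1}\wedge\cdots\wedge\d x^{i_k}$ indexed by ordered multi-indices $I=(i_1<\cdots<i_k)$ form a basis of $\Lambda^k\mathrm{T}^*_x M$. For $\eta = \sum_{|I|=k}\eta_I\,\d x^I$ one has by definition $|\eta|_{\mathrm{eucl}}^2 = \sum_I |\eta_I|^2$, while the inner product on $\Lambda^k\mathrm{T}^*_x M$ induced by $g$ reads $|\eta|_g^2 = \sum_{I,J} G^{(k)}_{IJ}(x)\,\eta_I\overline{\eta_J}$, where, denoting by $[g^{ij}(x)]$ the inverse of the Gram matrix $[g_{ij}(x)]$ of $g$ in the coordinates $x^i$,
\[
G^{(k)}_{IJ}(x) = \langle \d x^I,\d x^J\rangle_{\Lambda^k\mathrm{T}^*_x M} = \det\big[g^{i_a j_b}(x)\big]_{1\le a,b\le k}
\]
by the very definition of the inner product on $\Lambda^k\mathrm{T}^*_x M$ recalled above (for $k=0$ one has $G^{(0)}=[1]$ and both norms are the modulus).

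Next, since $g$ is smooth and $\varphi$ is a diffeomorphism, the functions $x\mapsto g_{ij}(x)$, hence $x\mapsto g^{ij}(x)$, hence the entries of $G^{(k)}(x)$, are smooth on $U$ and extend continuously to the compact closure $\overline{U}$ (the chart being defined on a neighbourhood of $\overline{U}$, as we may assume). Each $G^{(k)}(x)$ is real symmetric and positive-definite, so its eigenvalues are positive, and the smallest and largest eigenvalue of a symmetric matrix depend continuously on its entries; thus $x\mapsto\lambda_{\min}\big(G^{(k)}(x)\big)$ and $x\mapsto\lambda_{\max}\big(G^{(k)}(x)\big)$ are continuous and strictly positive on $\overline{U}$. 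By compactness there exist constants $0<m_k\le M_k<\infty$ with $m_k\le\lambda_{\min}\big(G^{(k)}(x)\big)$ and $\lambda_{\max}\big(G^{(k)}(x)\big)\le M_k$ for all $x\in\overline{U}$ and all $k\in\{0,\dots,d\}$. The spectral theorem then yields $m_k|\eta|_{\mathrm{eucl}}^2\le|\eta|_g^2\le M_k|\eta|_{\mathrm{eucl}}^2$ for every $x\in U$ and every $\eta\in\Lambda^k\mathrm{T}^*_x M$, and taking $C := \max_{0\le k\le d}\max\{\sqrt{M_k},\,m_k^{-1/2}\}\ge 1$ gives the asserted two-sided bound, uniformly in $x\in U$.

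The only genuine point is the uniformity in $x$: at a fixed point the equivalence of two inner products on a finite-dimensional space is automatic, so the substance of the statement is that the equivalence constant can be chosen independently of $x$, and this is exactly where compactness of $\overline{U}$ is used — without it the constant could degenerate as $x\to\partial U$ (for instance if $g_{ij}$ or $g^{ij}$ blows up there). The passage from $k=1$ to general $k$ costs nothing: one simply records that $G^{(k)}(x)$ has entries polynomial in the $g^{ij}(x)$, hence continuous on $\overline{U}$; equivalently, $G^{(k)}(x)$ is the $k$-th compound matrix of $[g^{ij}(x)]$, whose eigenvalues are the products $\lambda_{i_1}(x)\cdots\lambda_{i_k}(x)$ of $k$ distinct eigenvalues of $[g^{ij}(x)]$, so that one may even take $m_k = m_1^{\,k}$ and $M_k = M_1^{\,k}$.
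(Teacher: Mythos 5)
Your proof is correct and takes essentially the same approach as the paper: the paper introduces the positive-definite operator $A_x$ with $\langle\eta,\zeta\rangle_g = \langle A_x\eta,\zeta\rangle_{\mathrm{eucl}}$ and bounds its eigenvalues uniformly on $\overline{U}$ by continuity and compactness, which is exactly your Gram-matrix argument with $G^{(k)}(x)$ playing the role of the matrix of $A_x$. Your added observations (that one may take $m_k = m_1^k$, $M_k = M_1^k$ via compound matrices, and the explicit caveat that the chart should extend past $\overline{U}$) are correct refinements but not departures from the paper's reasoning.
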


\begin{proof}
On $\Lambda^k \mathrm{T}_x^*M$ consider the two inner products induced by the Euclidean metric in the chart and by $g_x$.
Let $A_x$ be the unique positive definite linear operator on $\Lambda^k \mathrm{T}_x^*M$ such that
\[
\la \eta,\zeta\ra_g 
= \la A_x\eta,\zeta\ra_{\mathrm{eucl}},
\qquad \eta,\zeta\in \Lambda^k T_x^*M.
\]
The coefficients of $A_x$ in the coordinate basis depend smoothly on $x$, hence continuously on $\ovl{U}$.
Since each $A_x$ is positive definite, its smallest and largest eigenvalues satisfy
\[
0 < \lambda_{\min}(A_x)
\leq \lambda_{\max}(A_x)
<\infty.
\]
By compactness of $\ovl{U}$ and continuity of $x\mapsto \lambda_{\min}(A_x)$ and $x\mapsto \lambda_{\max}(A_x)$, there exist constants $0<m\leq M<\infty$ such that
\[
m \leq \lambda_{\min}(A_x) \leq \lambda_{\max}(A_x)\leq M,
\qquad x\in U.
\]
Therefore
\[
m|\eta|_{\mathrm{eucl}}^2
\leq
\langle A_x\eta,\eta\rangle_{\mathrm{eucl}}
=
|\eta|_g^2
\leq
M|\eta|_{\mathrm{eucl}}^2,
\qquad x\in U,\ \eta\in \Lambda^k \mathrm{T}_x^*M,
\]
which yields the desired uniform equivalence.
\end{proof}

We also need the following folklore observation.

\begin{lemma}
\label{lem:equiv-global-local-Lp}
Let $(M,g)$ be a smooth compact Riemannian manifold of dimension $d$ and let $\mathfrak{A}=\{(U_1,\varphi_1),(U_2,\varphi_2),\ldots,(U_m,\varphi_m)\}$ be a finite atlas with a smooth partition of unity $(\chi_1,\ldots,\chi_m)$  subordinate to $\mathfrak{A}$. For any $j \in \{1,\dots,m\}$, write $V_j \ov{\mathrm{def}}{=} \varphi_j(U_j) \subset \R^d$. For any differential form $\omega \in \Omega^\bullet(M)$, write in coordinates
\[
(\varphi_j^{-1})^*(\chi_j \omega)(x)
=
\sum_{k=0}^d \sum_{|I|=k} f_{j,I}(x) \d x_I,
\qquad x\in V_j.
\]
Suppose that $1 \leq p < \infty$. Then we have an equivalence
\begin{equation}
\label{eq:equiv-global-local-Lp-correct}
\norm{\omega}_{\L^p(\Omega^\bullet(M))}
\approx
\Big(\sum_{j=1}^m \int_{V_j}\Big(\sum_{k=0}^d\sum_{|I|=k}|f_{j,I}(x)|^2\Big)^{\frac p2}\d x\Big)^{\frac1p}.
\end{equation}
\end{lemma}

\begin{proof}
Fix $j\in\{1,\dots,m\}$. On $U_j$ consider the local coframe $(\d x_1,\dots,\d x_d)$ transported by $\varphi_j$.
For each $x\in V_j$, set $y=\varphi_j^{-1}(x)\in U_j$. By Proposition \ref{prop-equivalence} applied to the chart $U_j$ and to all degrees $k$, the norms induced by $g_y$ on $\Lambda^k T_y^*M$ are uniformly equivalent to the Euclidean norms in these coordinates. Hence there exists a constant $C_{1,j}\geq 1$ such that for all $x\in V_j$,
\begin{equation}
\label{eq:pointwise-equivalence}
C_{1,j}^{-1}
\Big(\sum_{k=0}^d\sum_{|I|=k}|f_{j,I}(x)|^2\Big)^{\frac12}
\leq
\big|(\varphi_j^{-1})^*(\chi_j\omega)(x)\big|_{\Lambda^\bullet}
\leq
C_{1,j}
\Big(\sum_{k=0}^d\sum_{|I|=k}|f_{j,I}(x)|^2\Big)^{\frac12}.
\end{equation}
Moreover, since $M$ is compact and $\varphi_j$ is smooth, the Riemannian measure $\mu_g$ pulls back in local coordinates to
\[
(\varphi_j^{-1})^*\mu_g=\rho_j(x) \d x
\]
for a smooth positive function $\rho_j$ on $V_j$. Since $\ovl{U_j}$ is compact, there exists $C_{2,j}\geq 1$ such that
\begin{equation}
\label{eq:jacobian-bounds}
C_{2,j}^{-1}
\leq \rho_j(x)
\leq C_{2,j},
\qquad x \in V_j.
\end{equation}
Combining \eqref{eq:pointwise-equivalence} and \eqref{eq:jacobian-bounds}, we get
\begin{align*}
\norm{\chi_j\omega}_{\L^p(\Omega^\bullet(M))}^p
&=
\int_{U_j} |\chi_j\omega(y)|_{\Lambda^\bullet}^p \,\d\mu_g(y) \\
&=
\int_{V_j} \big|(\varphi_j^{-1})^*(\chi_j\omega)(x)\big|_{\Lambda^\bullet}^p \,\rho_j(x)\,\d x
\approx
\int_{V_j}\Big(\sum_{k=0}^d\sum_{|I|=k}|f_{j,I}(x)|^2\Big)^{\frac p2}\d x,
\end{align*}
with constants depending only on the chart $j$. Finally, since $\sum_{j=1}^m \chi_j =1$ and $0 \leq \chi_j\leq 1$, we have for any $y \in M$,
\[
|\omega(y)|_{\Lambda^\bullet}
=
\Big|\sum_{j=1}^m \chi_j(y)\omega(y)\Big|_{\Lambda^\bullet}
\leq
\sum_{j=1}^m |\chi_j(y)\omega(y)|_{\Lambda^\bullet}.
\]
Applying Jensen's inequality, we deduce that for any $y \in M$,
\[
|\omega(y)|_{\Lambda^\bullet}^p
\leq
\Big(\sum_{j=1}^m |\chi_j(y)\omega(y)|_{\Lambda^\bullet}\Big)^p
\leq
m^{p-1}\sum_{j=1}^m |\chi_j(y)\omega(y)|_{\Lambda^\bullet}^p.
\]
Integrating over $M$, we obtain
\[
\norm{\omega}_{\L^p(\Omega^\bullet(M))}^p
\leq
m^{p-1}\sum_{j=1}^m \norm{\chi_j\omega}_{\L^p(\Omega^\bullet(M))}^p.
\]
Conversely, we have $\norm{\chi_j\omega}_{\L^p(\Omega^\bullet(M))} \leq \norm{\omega}_{\L^p(\Omega^\bullet(M))}$ for each $j$, hence
\[
\sum_{j=1}^m \norm{\chi_j\omega}_{\L^p(\Omega^\bullet(M))}^p
\leq m \norm{\omega}_{\L^p(\Omega^\bullet(M))}^p.
\]
Combining these estimates with the chart-wise equivalences above yields \eqref{eq:equiv-global-local-Lp-correct}.
\end{proof}

Now, we can prove the following result.

\begin{thm}
\label{thm:Rellich-local-forms-approx}
Let $(M,g)$ be a smooth compact Riemannian manifold of dimension $d$. Suppose that $1 \leq p < \infty$. For each integer $k \in \{0,\dots,d\}$, consider the canonical embedding $J_{k} \co \W^{1,p}(\Omega^k(M)) \to \L^p(\Omega^k(M))$. Then the following statements hold.
\begin{enumerate}
\item The embedding $J \ov{\mathrm{def}}{=} \bigoplus_{k=0}^d J_{k} \co \W^{1,p}(\Omega^\bullet(M)) \to \L^p(\Omega^\bullet(M))$ is compact.

\item There exists a constant $C>0$ such that 
\begin{equation} 
\label{eq:approx-forms-local-all-degrees} 
a_n(J) \leq C n^{-\frac1d}, \qquad n \geq 1. 
\end{equation}
\end{enumerate}
\end{thm}

\begin{proof}
Fix a finite atlas $\mathfrak{A}=\{(U_1,\varphi_1),\ldots,(U_m,\varphi_m)\}$ such that each chart $\varphi_j \co U_j \to V_j$ is a diffeomorphism onto an open subset $V_j$ of $\R^d$. Let $(\chi_1,\ldots,\chi_m)$ be a smooth partition of unity subordinate to this atlas. Shrinking the charts if necessary, we may assume that each $V_j$ is bounded with $\C^1$ boundary. For each $j$, choose the standard coordinate coframe on $V_j$ and write, for any form $\omega \in \Omega^\bullet(M)$,
\[
(\varphi_j^{-1})^*(\chi_j \omega)(x)
=
\sum_{k=0}^d  \sum_{|I|=k} f_{j,I}(x)\, \d x_I,
\qquad x \in V_j,
\]
with scalar coefficient functions $f_{j,I}$. By Lemma~\ref{lem:equiv-global-local-Lp}, we have the equivalence
\begin{equation}
\label{eq:equiv-global-local-Lp-fixed}
\norm{\omega}_{\L^p(\Omega^\bullet(M))}
\approx
\Big(\sum_{j=1}^m \int_{V_j}\Big(\sum_{k=0}^d\sum_{|I|=k}|f_{j,I}(x)|^2\Big)^{\frac p2} \d x\Big)^{\frac1p}.
\end{equation}
Moreover, by the definition of $\W^{1,p}(\Omega^\bullet(M))$ in charts (see \eqref{Sobolev-charts}), we also have
\begin{equation}
\label{eq:equiv-global-local-W1p-fixed}
\norm{\omega}_{\W^{1,p}(\Omega^\bullet(M))}
\approx
\sum_{j=1}^m \sum_{k=0}^d \sum_{|I|=k} \norm{f_{j,I}}_{\W^{1,p}(V_j)}.
\end{equation}
Define a linear map on smooth forms $T$ by $T(\omega) \ov{\mathrm{def}}{=} (f_{j,I})_{j,k,I}$. Define $S$ by the usual gluing procedure using the partition of unity: given a family $(g_{j,I})_{j,k,I}$, define on $U_j$
\[
\eta_j \ov{\mathrm{def}}{=} (\varphi_j^{-1})^*\Big(\sum_{k=0}^d\sum_{|I|=k} (g_{j,I}\circ \varphi_j)\,\d x_I\Big),
\qquad
S((g_{j,I})_{j,k,I}) \ov{\mathrm{def}}{=} \sum_{j=1}^m \chi_j \eta_j.
\]
By \eqref{eq:equiv-global-local-Lp-fixed} and \eqref{eq:equiv-global-local-W1p-fixed}, the maps $T$ and $S$ extend to bounded operators between the corresponding $\W^{1,p}$ and $\L^p$-spaces. We have the factorization
\begin{equation}
\label{eq:factor-global-fixed}
J = S \circ \mathcal{J} \circ T,
\end{equation}
where
\[
\mathcal{J}
\ov{\mathrm{def}}{=}
\bigoplus_{j=1}^m \bigoplus_{k=0}^d\bigoplus_{|I|=k} J_{V_j},
\qquad
J_{V_j} \co \W^{1,p}(V_j) \to \L^p(V_j)
\]
is the scalar Sobolev embedding on $V_j$. Since each $J_{V_j}$ is compact, $\mathcal{J}$ is compact, hence $J$ is compact by \eqref{eq:factor-global-fixed}. This proves the first point.

For the upper bound on approximation numbers, by the ideal property, we have
\[
a_n(J) 
\ov{\eqref{eq:factor-global-fixed}}{=} a_n(S \circ \mathcal{J} \circ T) 
\leq \norm{S} a_n(\mathcal{J}) \norm{T}.
\]
Now $\mathcal{J}$ is a finite block diagonal sum of scalar embeddings $J_{V_j}$. By Lemma~\ref{lemma:approx-finite-direct-sum} and Theorem~\ref{Th-Edmunds}, there exists $C>0$ such that
\[
a_n(\mathcal{J}) \leq C n^{-\frac{1}{d}},
\qquad n\geq 1.
\]
Hence $a_n(J)\leq C' n^{-\frac{1}{d}}$ for some $C'>0$.
\end{proof}

\subsection{Compact Banach spectral triples}

\paragraph{Compact Banach spectral triples}
First, we recall the notion of compact Banach spectral triple introduced in \cite{Arh26a} (see also \cite[Definition 5.10 p.~218]{ArK22} for a slight variant). Consider a triple $(\cal{A},X,D)$ consisting of the following data: a Banach space $X$, a bisectorial operator $D$ on $X$ with dense domain $\dom D \subset X$, and a Banach algebra $\cal{A}$ equipped with a continuous homomorphism $\pi \co \cal{A} \to \B(X)$. Heuristically, a Banach spectral triple plays the role of a ``noncommutative manifold'': the algebra $\cal{A}$ replaces an algebra $\C(\cal{X})$ of continuous functions defined on a topological space $\cal{X}$, while the operator $D$ encodes the underlying first-order differential structure. The norm of the commutator $[D,\pi(a)]$ measures how ``Lipschitz'' a ``noncommutative function'' $a$ is, and the density of the Lipschitz algebra $\Lip_D(\cal A)$ in $\cal{A}$ expresses that this differential structure is rich enough to approximate the whole algebra $\cal{A}$. Finally, compactness of the resolvent of the operator $D$ is the analytic counterpart of compactness in the commutative geometric setting.

We say that $(\cal{A},X,D)$ is a compact Banach spectral triple if
\begin{enumerate}
\item{} $D$ admits a bounded $\H^\infty(\Sigma^\bi_\theta)$ functional calculus on a bisector $\Sigma^\bi_\theta$ for some angle $\theta \in (\omega_{\bi}(D),\frac{\pi}{2})$,
\item{} $D$ has compact resolvent, i.e.~there exists $\lambda$ belonging to the resolvent set $\rho(D)\ov{\mathrm{def}}{=} \mathbb{C}\backslash \sigma(D)$ such that the resolvent $R(\lambda,D)$ is compact,
\item{} the Lipschitz algebra
\begin{align}
\label{Lipschitz-algebra-def}
\MoveEqLeft
\Lip_D(\cal{A}) 
\ov{\mathrm{def}}{=} \big\{a \in \cal{A} : \pi(a) \cdot \dom D \subset \dom D 
\text{ and the unbounded operator } \\
&\qquad \qquad  [D,\pi(a)] \co \dom D \subset X \to X \text{ extends to an element of } \B(X)\big\}. \nonumber
\end{align} 
is dense in the algebra $\cal{A}$.
\end{enumerate}
If in addition, there exists an isomorphism $\gamma \co X \to X$, with $\gamma^2=\Id$, 
\begin{equation}
\label{grading-ST}
D\gamma=-\gamma D 
\quad \text{and} \quad 
\gamma \pi(a)=\pi(a)\gamma, \quad a \in \cal{A},
\end{equation}
the spectral triple is said to be even or graded. Otherwise, it is said to be odd or ungraded.

Let $(M,g)$ be a smooth compact Riemannian manifold. For any function $f \in \C_0(M)$, we introduce the multiplication operator $M_f \co \L^p(\Omega^\bullet(M)) \to \L^p(\Omega^\bullet(M))$, $\omega \mapsto f\omega$. We can consider the representation $\pi \co \C(M) \to \B(\L^p(\Omega^\bullet(M)))$, $f \mapsto M_f$, i.e.
\begin{equation}
\label{def-de-pi}
\pi(f)\omega 
\ov{\mathrm{def}}{=} f\omega,\quad f \in \C(M), \omega \in \L^p(\Omega^\bullet(M)).
\end{equation}
Now, we can prove the following result. Even in the case $p=2$, note that our approach is different from that of the proof of \cite[Theorem 3.4.1]{SuZ23}. If $0 < q < \infty$ and if $X$ and $Y$ are Banach spaces, we will use the notation
\begin{equation}
\label{def-Sqs}
S^{q,\infty}_\app(X,Y)
\ov{\mathrm{def}}{=} \big\{T \co X \to Y : (a_n(T)) \in \ell^{q,\infty} \big\}
\end{equation}
of \cite[Definition 1.d.18 p.~72]{Kon86} and $S^{q,\infty}_\app(X)$ for $S^{q,\infty}_\app(X,X)$. These spaces are useful for measuring the degree of compactness of operators. Following \cite{Arh26a}, we say that a compact spectral triple $(\cal{A},X,D)$ is $q^+$-summable if the resolvent operator $R(\i,D_{p})$ belongs to $S^{q,\infty}_\app(X)$.


\begin{thm}
\label{thm:locally-compact-Hodge-Dirac}
Let $(M,g)$ be a smooth compact Riemannian manifold. Suppose that $1 < p < \infty$. Then $(\C(M), \L^p(\Omega^\bullet(M)), D_p)$ is a compact Banach spectral triple, which is $d^+$-summable.
\end{thm}

\begin{proof}
The first point is satisfied by Corollary \ref{cor-Hodge-Dirac-functional-calculus} and the third point is satisfied by Proposition \ref{prop:commutator-Hodge-Dirac}. Now, we prove the second point, i.e., $R(\i,D_{p})$ is a compact operator on the space $\L^p(\Omega^\bullet(M))$. By the ideal property \cite[Theorem 4.8 p.~158]{Kat76} of the space of compact operators, we obtain the compactness. Actually, we will show in addition that $R(\i,D_{p})$ belongs to the space $S_{\app}^{d,\infty}(\L^p(\Omega^\bullet(M)))$, i.e., the spectral triple is $d^+$-summable. 

Since $D_p$ is bisectorial, we have $\i \in \rho(D_{p})$. So the operator $(\i\,\Id-D_p)^{-1} \co \L^p(\Omega^\bullet(M)) \to \W^{1,p}(\Omega^\bullet(M))$ is bounded by definition\footnote{\thefootnote. Consider a closed operator $T$ on a Banach space $X$. We equip $\dom T$ with the graph norm, which is a Banach space since $T$ is closed. For any $\lambda \in \rho(T)$, the bijection $\lambda\,\Id - T \co \dom T \to X$ is continuous. By the bounded inverse theorem, the resolvent operator $(\lambda,\Id-T)^{-1}$ is continuous from $X$ onto the space $\dom T$.}. We denote by $J \co \W^{1,p}(\Omega^\bullet(M)) \to \L^p(\Omega^\bullet(M))$ the canonical injection, which is compact and belongs to the space $S_{\app}^{d,\infty}(\W^{1,p}(\Omega^\bullet(M)),\L^p(\Omega^\bullet(M)))$ by Theorem \ref{thm:Rellich-local-forms-approx}. Then the resolvent operator $R(\i,D_{p}) \co \L^p(\Omega^\bullet(M)) \to \L^p(\Omega^\bullet(M))$ admits the factorization
\[
\L^p(\Omega^\bullet(M))\xra{(\i\,\Id-D_p)^{-1}} 
\W^{1,p}(\Omega^\bullet(M)) 
\xra{J} \L^p(\Omega^\bullet(M)).
\]
Using the ideal properties of \cite[Theorem 4.8 p.~158]{Kat76} and \cite[Lemma p.~72]{Kon86}, we conclude that the operator $R(\i,D_{p})$ is compact and belongs to the space $S_{\app}^{d,\infty}(\L^p(\Omega^\bullet(M)))$.
\end{proof}

\section{Index of the Euler operator}
\label{Section-Index-Euler-operator}

In this section, we study the Fredholm index of the even part of the $\L^p$-Hodge--Dirac operator and relate it to the Euler characteristic $\chi(M)$ of the smooth compact Riemannian manifold $M$. We first show that the natural grading by parity of the degree turns the compact Banach spectral triple associated with $D_p$ into an even one. We then combine the $\L^p$-Hodge decomposition and a Banach Fredholm module pairing to prove that the corresponding Fredholm operator has index equal to $\chi(M)$.

\begin{thm}
\label{thm:even-Hodge-Dirac}
Let $(M,g)$ be a smooth compact Riemannian manifold of dimension $d$. Suppose that $1 < p < \infty$. Then the maps $\Omega^k(M) \to \Omega^k(M)$, $\omega \mapsto (-1)^{k}\omega$, where $k \in \{0,\ldots,d\}$, induce an isometric isomorphism $\gamma \co \L^p(\Omega^\bullet(M)) \to \L^p(\Omega^\bullet(M))$ and $(\C(M),\L^p(\Omega^\bullet(M)),D_p,\gamma)$ is an even compact Banach spectral triple.
\end{thm}

\begin{proof}
By Theorem~\ref{thm:locally-compact-Hodge-Dirac}, the triple $(\C(M),\L^p(\Omega^\bullet(M)),D_p)$ is a compact Banach spectral triple. We define a linear map $
\gamma \co \Omega^\bullet(M) \to \Omega^\bullet(M)$ 
by prescribing its action on homogeneous forms:
\begin{equation}
\label{def-gamma-123}
\gamma\omega \ov{\mathrm{def}}{=} (-1)^k \omega,
\qquad
\omega \in \Omega^k(M),\ k = 0,\dots,d,
\end{equation}
and extending linearly to arbitrary elements of $\Omega^\bullet(M)$. It is immediate that $
\gamma^2 = \Id_{\Omega^\bullet(M)}$. If $\omega \in \Omega^\bullet(M)$, we can write $\omega = \sum_{k=0}^d \omega_k$ with $\omega_k \in \Omega^k(M)$. Observe that
\begin{align*}
\norm{\gamma\omega}_{\L^p(\Omega^\bullet(M))}
&\ov{\eqref{norm-Lp-Df}\eqref{def-gamma-123}}{=}
\bigg(\int_M \bigg(\sum_{k=0}^d |(-1)^k\omega_k(x)|_{\Lambda^k \mathrm{T}_x^*M}^2\bigg)^{\frac{p}{2}} \d\mu_g(x)\bigg)^{\frac1p}
\\
&=
\bigg(\int_M \bigg(\sum_{k=0}^d |\omega_k(x)|_{\Lambda^k \mathrm{T}_x^*M}^2\bigg)^{\frac{p}{2}} \d\mu_g(x)\bigg)^{\frac1p}
\ov{\eqref{norm-Lp-Df}}{=} \norm{\omega}_{\L^p(\Omega^\bullet(M))}.
\end{align*}
Since $\Omega^\bullet(M)$ is dense in the Banach space $\L^p(\Omega^\bullet(M))$, the linear operator $\gamma \co \Omega^\bullet(M) \to \Omega^\bullet(M)$ clearly extends by continuity to an isometric isomorphism $\gamma \co \L^p(\Omega^\bullet(M)) \to \L^p(\Omega^\bullet(M))$ satisfying $\gamma^2=\Id_{\L^p(\Omega^\bullet(M))}$. 

Let $f \in \C(M)$ and let $\omega \in \Omega^\bullet(M)$. Write $\omega = \sum_{k=0}^d \omega_k$ with $\omega_k \in \Omega^k(M)$. Then $\pi(f)\omega = f\omega = \sum_{k=0}^d f\omega_k$, and each $f\omega_k$ is a $k$-form. Consequently, we have
\[
\gamma\big(\pi(f)\omega\big)
=
\gamma\bigg(\sum_{k=0}^d f\omega_k\bigg)
=
\sum_{k=0}^d (-1)^k f\omega_k
=
f\sum_{k=0}^d (-1)^k \omega_k
=
\pi(f)\gamma\omega.
\]
Hence $\gamma\pi(f) = \pi(f)\gamma$ on $\Omega^\bullet(M)$, and by density and continuity we obtain on $\L^p(\Omega^\bullet(M))$ the equality
\[
\gamma\pi(f) = \pi(f)\gamma,
\quad f \in \C(M).
\]

Finally, we verify the anticommutation relation with $D$. Recall that $D \ov{\mathrm{def}}{=} \d + \d^*$, where $\d$ increases the degree by one and $\d^*$ decreases the degree by one. Let $\omega_k \in \Omega^k(M)$. Then
\begin{equation}
\label{inter-ABC}
D\omega_k
\ov{\eqref{def-Hodge-Dirac}}{=} \d\omega_k + \d^*\omega_k
=\alpha_{k+1} + \beta_{k-1},
\end{equation}
where $\alpha_{k+1} \ov{\mathrm{def}}{=} \d\omega_k$ belongs to $\Omega^{k+1}(M)$ and $\beta_{k-1} \ov{\mathrm{def}}{=} \d^*\omega_k$ belongs to $\Omega^{k-1}(M)$ (with the convention $\alpha_{d+1} = 0$ if $k=d$ and $\beta_{-1} = 0$ if $k=0$). On the one hand, since the map $\gamma$ acts by $(-1)^{\deg}$ on homogeneous forms, we have
\begin{equation}
\label{inter-UYIIII}
\gamma D\omega_k
\ov{\eqref{inter-ABC}}{=}
\gamma(\alpha_{k+1} + \beta_{k-1})
\ov{\eqref{def-gamma-123}}{=}
(-1)^{k+1}\alpha_{k+1} + (-1)^{k-1}\beta_{k-1}.
\end{equation}
On the other hand, we have
\begin{equation}
\label{inter-DFGT}
D\gamma\omega_k
\ov{\eqref{def-gamma-123}}{=} (-1)^k D\omega_k
\ov{\eqref{inter-ABC}}{=} (-1)^k \alpha_{k+1} + (-1)^k \beta_{k-1}.
\end{equation}
Therefore
\begin{align*}
\MoveEqLeft
(\gamma D + D\gamma)\omega_k
\ov{\eqref{inter-DFGT} \eqref{inter-UYIIII}}{=}
\big((-1)^{k+1} + (-1)^k\big)\alpha_{k+1}
+\big((-1)^{k-1} + (-1)^k\big)\beta_{k-1}
= 0.
\end{align*}
%
We conclude by linearity that $\gamma D = -D \gamma$ on $\Omega^\bullet(M)$. By definition, $D_p$ is the closure of $D$ acting on the subspace $\Omega^\bullet(M)$ of the Banach space $\L^p(\Omega^\bullet(M))$. In particular, $\Omega^\bullet(M)$ is a core for the operator $D_p$. To pass from the anticommutation relation on $\Omega^{\bullet}(M)$ to the domain of the operator $D_{p}$, consider some $u \in \dom D_{p}$. By definition, there exists a sequence $(u_k)$ of elements of $\Omega^{\bullet}(M)$ such that
\[
u_k \to u
\quad\text{and}\quad
D u_k \to D_{p}u
\]
in $\L^p(\Omega^{\bullet}(M))$. Since the map $\gamma$ is bounded on $\L^p(\Omega^{\bullet}(M))$, we have $\gamma u_k \to \gamma u$ in $\L^p(\Omega^{\bullet}(M))$. Moreover, using $\gamma D = - D \gamma$ on $\Omega^{\bullet}(M)$, we obtain
\[
D(\gamma u_k)
=-\gamma(D u_k)
\to
-\gamma(D_{p}u)
\qquad \text{in }\L^p(\Omega^{\bullet}(M)).
\]
Therefore $(\gamma u_k)$ is an approximating sequence in the graph norm for $\gamma u$, which shows that $\gamma u \in \dom D_{p}$ and
\[
D_{p}(\gamma u)
=
-\gamma(D_{p}u).
\]
Hence $\gamma D_{p} + D_{p}\gamma = 0$ on the space $\dom D_{p}$.
\end{proof}

Recall that we have the canonical decomposition $\Omega^\bullet(M)
= \Omega^{\even}(M) \oplus \Omega^{\odd}(M)$ of the space of smooth forms, where
\[
\Omega^{\even}(M) \ov{\mathrm{def}}{=} \bigoplus_{k \text{ even}} \Omega^k(M) 
\quad \text{and} \quad
\Omega^{\odd}(M) \ov{\mathrm{def}}{=} \bigoplus_{k \text{ odd}} \Omega^k(M).
\]
By definition of the $\L^p$-norms on forms, this decomposition induces by completion a direct sum decomposition
\begin{equation}
\label{decompo-Lp-even-odd}
\L^p(\Omega^\bullet(M))
= \L^p(\Omega^{\even}(M)) \oplus \L^p(\Omega^{\odd}(M)).
\end{equation}
With respect to this decomposition, the operator $D_p$ admits by Proposition \ref{prop-anticommuting-symmetry-unbounded} the block matrix form 
\begin{equation}
\label{Dp-decompo-block}
D_p 
= \begin{bmatrix}
  0   & D_{p,-} \\
 D_{p,+}  &  0 \\
\end{bmatrix},
\end{equation}
where the operators $
D_{p,+} \co \dom D_{p,+} \subset \L^p(\Omega^{\even}(M)) \to \L^p(\Omega^{\odd}(M))$ and $
D_{p,-} \co \dom D_{p,-} \subset \L^p(\Omega^{\odd}(M)) \to \L^p(\Omega^{\even}(M))$ 
are the restrictions of the Hodge--Dirac operator to subspaces of even and odd forms respectively.

\paragraph{$\L^p$-Hodge decomposition} 
For each degree $k \in \{0,\ldots,d\}$, we introduce the space $\cal{H}^k(M)$ of smooth harmonic $k$-forms, that is
\[
\cal{H}^k(M)
\ov{\mathrm{def}}{=}
\{\omega \in \Omega^k(M) \co \Delta_{\HdR} \omega = 0\},
\]
where $\Delta_{\HdR}$ is the Hodge-de Rham Laplacian defined in \eqref{Hodge-deRham-Laplacian}. Recall that the Hodge theorem \cite[Theorem 6 p.~547]{GMS98} (see also \cite[Theorem 6.11 p.~225]{War83} \cite[p.~322]{Dieu82} in the simpler case of orientable manifolds) asserts that every cohomology class in the de Rham cohomology group $\H^k_{\dR}(M)$ admits a unique harmonic representative, so
\begin{equation}
\label{harmonic-space-and-de-Rham}
\cal{H}^k(M) 
\cong \H^k_{\dR}(M).
\end{equation}
In particular, $\cal{H}^k(M)$ is finite-dimensional. For any differential form $\omega \in \Omega^\bullet(M)$, it is known \cite[Proposition 7.5.2 p.~539]{AMR88} that $\Delta_{\HdR} \omega = 0$ if and only if $\d\omega = 0$ and $\d^* \omega = 0$.
Suppose that $1 < p < \infty$. For any integer $k \in \{0,\ldots,d\}$, by \cite[Proposition 6.5 p.~2091]{Sco95} (see also \cite[Theorem 1.2 p.~3618]{Li09} and \cite[Theorem~5.7 p.~63]{ISS99}), we have the following $\L^p$-Hodge direct sum decomposition
\begin{equation}
\label{eq:Lp-Hodge-decomp}
\L^p(\Omega^k(M))
=
\d_p \W^{1,p}(\Omega^{k-1}(M))
\oplus
(\d^*)_p \W^{1,p}(\Omega^{k+1}(M))
\oplus
\cal{H}^k(M),
\end{equation}
where $\d_p$ and $\d^*_p$ are the closures of the operators $\d$ and $\d^*$. Although this decomposition is stated in \cite[Proposition 6.5 p.~2091]{Sco95} and \cite[Theorem~5.7 p.~63]{ISS99} in the orientable case, it remains valid on compact non-orientable manifolds as well, according to \cite[Theorem 1.2 p.~3618]{Li09}.

We also refer to \cite[Theorem 5 p.~546]{GMS98} for the classical case $p=2$. Following \cite{Sco95} and \cite{ISS99}, we introduce the bounded ``harmonic'' projection $P_p \co \L^p(\Omega^\bullet(M)) \to \L^p(\Omega^\bullet(M))$ onto the subspace $\cal{H}(M) \ov{\mathrm{def}}{=} \oplus_{k=0}^d\cal{H}^k(M)$ and its restriction $P \co \Omega^\bullet(M) \to \cal{H}(M)$.
Moreover, we set
\[
\cal{H}^{\even}(M) 
\ov{\mathrm{def}}{=} \bigoplus_{k \text{ even}} \cal{H}^k(M),
\qquad
\cal{H}^{\odd}(M) 
\ov{\mathrm{def}}{=} \bigoplus_{k \text{ odd}} \cal{H}^k(M).
\]
%
We denote by $\chi(M) \ov{\mathrm{def}}{=} \sum_{k=0}^{\dim M} (-1)^k \dim \H_{\dR}^k(M)$ the Euler characteristic of $M$. Finally, we set
\begin{equation}
\label{def-de-scrD}
\scr{D}
\ov{\mathrm{def}}{=} |D_p| + P_p.
\end{equation}

%

\paragraph{Green's operator}
Let $E$ be a Hermitian vector bundle of finite rank over a compact manifold $M$. Consider a self-adjoint elliptic differential operator $A \co \C^\infty(M,E) \to \C^\infty(M,E)$ of order $m$. If $P \co \C^\infty(M,E) \to \C^\infty(M,E)$ is the restriction of the orthogonal projection from the Hilbert space $\L^2(M,E)$ on the subspace $\ker A$ then by \cite[Theorem 3.15 p.~15]{BDIP02} (see also \cite[Proposition 18 p.~81]{Mel06}) there exists a unique pseudo-differential operator $G \co \C^\infty(M,E) \to \C^\infty(M,E)$ such that
\begin{equation*}
GA
= \Id_{\C^\infty(M,E)} - P
\quad \text{and} \quad 
AG
= \Id_{\C^\infty(M,E)} - P
\end{equation*}
In this case, the order of $G$ is $-m$ and it is called Green's operator of $A$. We warn the reader that, in some references (see for instance \cite[Section 23.31.11]{Dieu82}), the term Green’s operator is used to denote the resolvent $(A-\lambda\, \Id)^{-1}$, where $\lambda$ does not belong to the spectrum of $A$ on $\L^2(M,E)$. Applying this result, we obtain the following proposition. 

\begin{prop}
\label{prop-existence-Green}
Let $(M,g)$ be a smoothe compact Riemannian manifold. There exists a unique pseudo-differential operator $G \co \Omega^{\bullet}(M) \to \Omega^{\bullet}(M)$ of order $-2$ such that
\begin{equation}
\label{parametrix}
G\Delta_{\HdR} 
= \Id - P
\quad \text{and} \quad
\Delta_{\HdR}G
= \Id - P.
\end{equation}
\end{prop}
This result is essentially contained in \cite[Section 5]{Sco95} in the orientable case. Using \cite[Proposition 5.1]{Arh26b}, we see that in this situation $G$ induces a bounded operator $G \co \L^p(\Omega^\bullet(M)) \to \W^{2,p}(\Omega^\bullet(M))$ such that 
\begin{equation}
\label{parametrix-bis}
\Delta_{\HdR,p} G  
= \Id_{\L^p(\Omega^\bullet(M))}-P_p, \quad G\Delta_{\HdR,p}   
= \Id_{\L^p(\Omega^\bullet(M))}-P_p, 
\end{equation}
where $\Delta_{\HdR,p}$ is the closure of the operator $\Delta_{\HdR}$. 
The next result, in the orientable case, is essentially proved in \cite[Proposition 5.2 p.~2085]{Sco95} with a different argument.

\begin{prop}
\label{prop-harmonic-Lp}
Let $(M,g)$ be a smooth compact Riemannian manifold. Suppose that $1 < p < \infty$. We have
\begin{equation*}
\ker \Delta_{\HdR,p}
=\cal{H}(M).
\end{equation*}
\end{prop}

\begin{proof}
First, we prove the inclusion $\ker \Delta_{\HdR,p} \subset \cal{H}(M)$. Let $\omega \in \ker \Delta_{\HdR,p}$. By definition of the closure, we have $\omega \in \dom \Delta_{\HdR,p}$ and the equality $\Delta_{\HdR,p}\omega=0$ in $\L^p(\Omega^{\bullet}(M))$, hence in the sense of distributions. By Proposition \ref{prop-existence-Green}, there exists a pseudo-differential operator $G$ of order $-2$ such that $P = \Id-G\Delta_{\HdR}$. Since $G$ is a pseudo-differential operator, it extends continuously to the space of distributions \cite[Section 28]{Dieu88}. Hence the identity $P=\Id-G\Delta_{\HdR}$ also holds in the distributional sense. Applying this identity to $\omega$ in the distributional sense gives
\[
P(\omega)
=\omega.
\]
According to \cite[p.~321]{Dieu82} the harmonic projection $P$ is a smoothing operator (its Schwartz kernel is smooth), hence $\omega=P(\omega)$ belongs to the space $\Omega^{\bullet}(M)$. The reverse inclusion is immediate since every smooth harmonic form belongs to $\dom \Delta_{\HdR,p}$ and is annihilated by $\Delta_{\HdR,p}$.
\end{proof}

\begin{lemma}
\label{lem:range-Laplacian-harmonic-projection}
Let $(M,g)$ be a smooth compact Riemannian manifold. Suppose that $1 < p < \infty$. We have
\begin{equation}
\label{eq:range-Laplacian-kernel-projection}
\Ran \Delta_{\HdR,p}
=\ker P_p.
\end{equation}
\end{lemma}

\begin{proof}
Let $G$ be the Green operator of Proposition~\ref{prop-existence-Green}. By \cite[Proposition 5.1]{Arh26b} and the continuous embedding $\W^{2,p}_\nabla(\Omega^{\bullet}(M))
\hookrightarrow
\L^p(\Omega^{\bullet}(M))$, 
the operator $G$ extends to a bounded operator on $\L^p(\Omega^{\bullet}(M))$. Similarly, the harmonic projection $P$ extends to the bounded projection $P_p$.

We first prove that $
\ker P_p \subset \Ran \Delta_{\HdR,p}$. Let $\eta \in \ker P_p$. Choose a sequence
$(\eta_j)$ in $\Omega^{\bullet}(M)$ such that $
\eta_j \to \eta$ in $\L^p(\Omega^{\bullet}(M))$. Set $
u_j \ov{\mathrm{def}}{=} G\eta_j$. Since $G$ maps smooth forms to smooth forms, we have
$u_j \in \Omega^{\bullet}(M)$. Moreover, $
u_j \to G\eta$ in $\L^p(\Omega^{\bullet}(M))$. On smooth forms, the Green identity gives $
\Delta_{\HdR}u_j
=\Delta_{\HdR}G\eta_j
=
(\Id-P)\eta_j$. Since $P_p$ is the bounded extension of $P$, we have
\[
(\Id-P)\eta_j
=
(\Id-P_p)\eta_j
\to
(\Id-P_p)\eta
=
\eta
\]
since $\eta \in \ker P_p$. Thus $
u_j \to G\eta$ and $
\Delta_{\HdR}u_j \to \eta$ in $\L^p(\Omega^{\bullet}(M))$. Since
$\Delta_{\HdR,p}$ is the closure of
$\Delta_{\HdR}$ initially defined on smooth forms, it follows that $
G\eta \in \dom \Delta_{\HdR,p}$ 
and $
\Delta_{\HdR,p}G\eta=\eta$. Therefore $\eta \in \Ran \Delta_{\HdR,p}$. So $
\ker P_p \subset \Ran \Delta_{\HdR,p}$.

Conversely, we prove that $
\Ran \Delta_{\HdR,p} \subset \ker P_p$. 
Let $u\in\dom \Delta_{\HdR,p}$. By definition of the closure, there
exists a sequence $(u_j)$ in $\Omega^{\bullet}(M)$ such that $
u_j \to u$ and $\Delta_{\HdR}u_j \to \Delta_{\HdR,p}u$ in $\L^p(\Omega^{\bullet}(M))$. For each $j$, we have $
P\Delta_{\HdR}u_j=0$. Indeed, if $h \in \cal{H}(M)$, then by formal self-adjointness,
\[
\big\la \Delta_{\HdR}u_j,h\big\ra_{\L^2}
=
\big\la u_j,\Delta_{\HdR}h\big\ra_{\L^2}
=
0.
\]
Thus $\Delta_{\HdR}u_j$ is orthogonal to the harmonic forms, and hence $P\Delta_{\HdR}u_j=0$. Since $P_p$ extends $P$ and is bounded on $\L^p(\Omega^{\bullet}(M))$, passing to the limit gives
\[
P_p\Delta_{\HdR,p}u
=
\lim_j P_p\Delta_{\HdR}u_j
=
\lim_j P\Delta_{\HdR}u_j
=
0.
\]
Therefore $
\Delta_{\HdR,p}u\in\ker P_p$. Hence $\Ran \Delta_{\HdR,p} \subset \ker P_p$. Combining both inclusions, we obtain $
\Ran \Delta_{\HdR,p}
=
\ker P_p$. 
\end{proof}

We record an elementary decomposition lemma.

\begin{lemma}
\label{lem:decomp-W1p-with-H}
Let $X$ be a Banach space. Consider a finite-dimensional subspace $F$ of $X$. Assume that there exists a bounded projection $P \co X \to X$ on $F$. If $Z$ is any closed subspace of $X$ such that $F \subset Z$, then we have a topological direct sum
\begin{equation*}
Z 
= F \oplus (Z \cap \ker P).
\end{equation*}
\end{lemma}

\begin{proof}
The restriction $P|_{Z} \co Z \to Z$ is a bounded projection on $F$ with kernel $Z \cap \ker P$.
\end{proof}

The first step is to prove the next result.

\begin{prop}
\label{prop-isomorphism}
Let $(M,g)$ be a smooth compact Riemannian manifold. Suppose that $1 < p < \infty$. The linear map
\begin{equation}
\label{eq:S-iso}
\scr{D} \co \W^{1,p}(\Omega^\bullet(M)) \xrightarrow{\ \cong\ } \L^p(\Omega^\bullet(M))
\end{equation}
is an isomorphism of Banach spaces.
\end{prop}

\begin{proof}
With \eqref{eq:Lp-Hodge-decomp}, we obtain a direct sum decomposition $\L^p(\Omega^\bullet(M)) =\cal{H}(M) \oplus \ker P_p$. Now, we use Lemma \ref{lem:decomp-W1p-with-H} with $X \ov{\mathrm{def}}{=} \L^p(\Omega^\bullet(M))$, $F \ov{\mathrm{def}}{=} \cal{H}(M)$ and $Z \ov{\mathrm{def}}{=} \W^{1,p}(\Omega^\bullet(M))$, where $\cal{H}(M)$ is the finite-dimensional space of harmonic forms (which is a subset of the space $\W^{1,p}(\Omega^\bullet(M))$). With the notation
\begin{equation}
\label{def-Fp}
F_p
\ov{\mathrm{def}}{=} \W^{1,p}(\Omega^\bullet(M)) \cap \ker P_p,
\end{equation}
this yields a topological direct sum
\[
\W^{1,p}(\Omega^\bullet(M)) 
= \cal{H}(M) \oplus F_p.
\]
Since $\ker D_p \ov{\eqref{Bisec-Ran-Ker}}{=} \ker \Delta_{\HdR,p} = \cal{H}(M)$ (see \cite[pp.~2085-2086]{Sco95} for the latter equality), we obtain the action of $\scr{D}$ on the first summand:
\[
\scr{D}\big|_{\cal{H}(M)} 
\ov{\eqref{def-de-scrD}}{=} (|D_p| + P_p )\big|_{\cal{H}(M)} 
= \Id_{\cal{H}(M)}.
\]
According to \cite[Theorem 13.6 a) p.~318]{BlB13}, the Hodge--Dirac operator $D$ is an elliptic differential operator of order 1. By \cite[17.13.5 p.~298]{Dieu72}, 
we deduce that its square $D^2$ is a differential operator of order 2, which is elliptic by \cite[p.~277]{Dieu88}. Finally, by \cite[Theorem 1 p.~124]{Bur68} \cite{See67} \cite{Shu01}, we conclude that $|D|=(D^2)^{\frac{1}{2}}$ is an elliptic pseudo-differential operator of order $1$. Note that this operator coincides with the restriction of the operator $|D_p|$. By \cite[Proposition 5.1]{Arh26b}, we have a continuous map $|D_p| \co \W^{1,p}(\Omega^\bullet(M)) \to \L^p(\Omega^\bullet(M))$. By restriction, we deduce a bounded map
\[
|D_p|\big|_{F_p} \co F_p \to \L^p(\Omega^\bullet(M)),
\]
which is injective since $\ker |D_p| =\ker D_p = \cal{H}(M)$. Now, we will show that its range is $\ker P_p$.  
We see that 
\begin{equation}
\label{inter-end}
\ovl{\Ran |D_p|} = \ovl{\Ran (D_p^2)^{\frac{1}{2}}} \subset \ovl{\Ran \Delta_{\HdR,p}} \ov{\eqref{eq:range-Laplacian-kernel-projection}}{=} \ker P_p,
\end{equation}
where we used \cite[Corollary 3.1.11 p.~66]{Haa06} in the first inclusion. Let $\eta \in \ker P_p$, so that $P_p \eta = 0$. We let $v \ov{\mathrm{def}}{=} G\eta \in \W^{2,p}(\Omega^\bullet(M))$. Then
\[
\Delta_{\HdR,p} v
= \Delta_{\HdR,p} G\eta
\ov{\eqref{parametrix-bis}}{=} (\Id - P_p)\eta
= \eta.
\]
Since $D_p^2 = \Delta_{\HdR,p}$, this can be rewritten as $D_p^2 v = \eta$. Since $|D_p|$ is an elliptic pseudodifferential operator of order $1$, it induces a bounded map $|D_p| \co \W^{2,p}(\Omega^\bullet(M)) \to \W^{1,p}(\Omega^\bullet(M))$. Consequently the element $u 
\ov{\mathrm{def}}{=} \frac{1}{2}|D_p|v$ belongs to the space $ \W^{1,p}(\Omega^\bullet(M))$. Moreover, we have
\[
|D_p|u 
= |D_p|^2 v 
= D_p^2 v 
= \eta.
\]
Note that \eqref{inter-end} implies that $\Ran |D_p| \subset \ker P_p$, it follows that
$
P_p u 
= P_p |D_p| v 
= 0.
$
So $u$ belongs to $\W^{1,p}(\Omega^\bullet(M)) \cap \ker P_p \ov{\eqref{def-Fp}}{=} F_p$. We conclude that the map $
|D_p| \big|_{F_p} \co F_p \to \ker P_p$ is an isomorphism of Banach spaces. Since $P_p$ vanishes on the subspace $F_p$, we have
 $\scr{D}\big|_{F_p} 
= |D_p|\big|_{F_p}$, 
which is an isomorphism. Combining the two pieces, we deduce that $\scr{D}$ is continuous and bijective from $\W^{1,p}(\Omega^\bullet(M))$ onto $\L^p(\Omega^\bullet(M))$, and the open mapping theorem \cite[Corollary 1.6.6 p.~44]{Meg98} gives \eqref{eq:S-iso}.
\end{proof}

\paragraph{Fredholm operators}
Following \cite[Definition 4.37 p.~156]{AbA02}, we say that a bounded operator $T \co X \to Y$, acting between complex Banach spaces $X$ and $Y$, is a Fredholm operator if the subspaces $\ker T$ and $\coker(T) \ov{\mathrm{def}}{=} Y/\Ran T$ are finite-dimensional. In this case, we introduce the index
\begin{equation}
\label{Fredholm-Index}
\Index T 
\ov{\mathrm{def}}{=} \dim \ker T-\dim Y/\Ran T.
\end{equation}
Every Fredholm operator has a closed range by \cite[Lemma 4.38 p.~156]{AbA02}. 

According to \cite[Theorem 4.48 p.~163]{AbA02}, the set $\Fred(X,Y)$ of all Fredholm operators from $X$ into $Y$ is an open subset of $\B(X,Y)$ and the index function $\Index \co \Fred(X,Y) \to \Z$ is continuous (hence locally constant). 
If $T \co X \to Y$ and $S \co Y \to Z$ are Fredholm operators then the composition $ST$ is also a Fredholm operator and
\begin{equation}
\label{composition-Index}
\Index ST
=\Index S +\Index T,
\end{equation}
see \cite[Theorem 4.43 p.~158]{AbA02}. 


\paragraph{$\K$-theory} 
We refer to the books \cite{Bl98}, \cite{CMR07}, \cite{Eme24}, \cite{GVF01}, \cite{RLL0} and \cite{WeO93} for more information and to \cite{BaS11} for a short introduction. We briefly recall the algebraic construction of the $\K_0$-group, which applies to Banach algebras as well as to more general rings.

\paragraph{$\mathrm{K}_0$-group}
Let $\cal{A}$ be a unital ring. We denote by $\M_{\infty}(\cal{A})$ the algebraic direct limit of the algebras $\M_n(\cal{A})$ under the embeddings $x \mapsto \diag(x,0)$. The direct limit $\M_{\infty}(\cal{A})$ can be thought of as the algebra of infinite-dimensional matrices with entries in $\cal{A}$, all but finitely many of which are zero. Two idempotents $e,f \in \M_{\infty}(\cal{A})$ are said to be (Murray-von Neumann) equivalent if there exist elements $v,w \in \M_{\infty}(\cal{A})$ such that $e=v w$ and $f=w v$. The equivalence class of an idempotent $e$ is denoted by $[e]$ and we introduce the set $\V(\cal{A}) \ov{\mathrm{def}}{=} \{ [e] : e \in \M_{\infty}(\cal{A}), e^2=e \}$ of all equivalence classes of idempotents in $\M_\infty(\cal{A})$.

According to \cite[pp.~3-4]{CMR07}, 
the set $\V(\cal{A})$ has a canonical structure of abelian semigroup, with the addition operation defined by the formula $[e]+[f] \ov{\mathrm{def}}{=} [\diag(e,f)]$. The identity of the semigroup $\V(\cal{A})$ is given by $[0]$, where $0$ is the zero idempotent. The group $\K_0(\cal{A})$ of the unital ring $\cal{A}$ is the Grothendieck group of the abelian semigroup $\V(\cal{A})$. In particular, each element of $\K_0(\cal{A})$ identifies with a formal difference $[e]-[f]$. Two such formal differences $[e_1] - [f_1]$ and $[e_2] - [f_2]$ are equal in $\K_0(\cal{A})$ if there exists $g \in \V(\cal{A})$ such that $[e_1] + [f_2] +  [g] = [f_1] + [e_2] +  [g]$.


\begin{example} \normalfont 
If $\cal{A} = \C(\cal{X})$ is the algebra of continuous functions on a second countable compact Hausdorff topological space $\cal{X}$, then by \cite[Proposition 8.1.7 p.~305]{Eme24} we have an isomorphism $\K_0(\C(\cal{X})) \cong \K^0(\cal{X})$, where $\K^0(\cal{X})$ is the topological $\K$-theory group defined by vector bundles.
\end{example}

\paragraph{Coupling between Banach K-homology with K-theory}
An even Banach Fredholm module $(X,\pi,F,\gamma)$ over an algebra $\cal{A}$ on a Banach space $X$ consists of a representation $\pi \co \cal{A} \to \B(X)$, a bounded operator $F \co X \to X$ and a bounded operator $\gamma \co X \to X$ with $\gamma^2=\Id_X$ such that
\begin{enumerate}
\item $F^2-\Id_X$ is a compact operator on $X$,
\item for any $a \in \cal{A}$ the commutator $[F, \pi(a)]$ is a compact operator on $X$,
\item $\gamma$ is a symmetry, i.e.~$\gamma^2=\Id_X$, and for any $a \in \cal{A}$ we have
\begin{equation}
\label{commuting-rules}
F\gamma
=-\gamma F
\quad \text{and} \quad  
[\pi(a),\gamma]=0 .
\end{equation}
\end{enumerate}
We say that $\gamma$ is the grading operator. In this case, $P \ov{\mathrm{def}}{=} \frac{\Id_X+\gamma}{2}$ is a bounded projection and we can write $X=X_+\oplus X_-$, where $X_+ \ov{\mathrm{def}}{=} \Ran P$ and $X_- \ov{\mathrm{def}}{=}\Ran (\Id_X-P)$. With respect to this decomposition, the equations of \eqref{commuting-rules} imply that we can write
\begin{equation}
\label{Fredholm-even}
\pi(a)
=\begin{bmatrix}
 \pi_+(a)    &  0 \\
  0   & \pi_-(a)  \\
\end{bmatrix}
\quad \text{and} \quad 
F=\begin{bmatrix}
  0   & F_-  \\
  F_+   & 0  \\
\end{bmatrix},
\end{equation}
where $\pi_+ \co \cal{A} \to \B(X_+)$ and $\pi_- \co \cal{A} \to \B(X_-)$ are representations of $\cal{A}$. The following coupling result is proved in \cite{Arh26a}. 

\begin{thm}
\label{th-pairing-even}
Let $\cal{A}$ be a unital algebra. Consider a projection $e$ of the matrix algebra $\M_n(\cal{A})$ and an even Banach Fredholm module $\bigg(X_+ \oplus X_-,\pi,\begin{bmatrix}
  0   & F_-  \\
   F_+  &  0 \\
\end{bmatrix},\gamma\bigg)$ over $\cal{A}$ with $\pi(1)=\Id_X$. We introduce the bounded operator $e_n \ov{\mathrm{def}}{=} (\Id \ot \pi)(e) \co X^{\oplus n} \to X^{\oplus n}$. Then the bounded operator
\begin{equation}
\label{eFe-90}
e_n (\Id \ot F_+) e_n \co e_n(X_+^{\oplus n}) \to e_n(X_-^{\oplus n}) 
\end{equation}
is Fredholm. 
\end{thm}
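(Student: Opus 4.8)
The plan is to reduce the statement to the non-amplified case $n=1$ together with the stability of the Fredholm property under compact perturbations. First I would observe that the amplification $(X^{\oplus n}, \Id \ot \pi, \Id \ot F, \Id \ot \gamma)$ is again an even Banach Fredholm module over the matrix algebra $\M_n(\cal{A})$: the three defining axioms are preserved because finite direct sums of compact operators are compact, and the grading $\Id_{\M_n} \ot \gamma$ still anticommutes with $\Id \ot F$ and commutes with the representation $\Id \ot \pi$ of $\M_n(\cal{A})$. Thus $e_n = (\Id \ot \pi)(e)$ is a bounded idempotent on $X^{\oplus n}$ commuting with $\Id \ot \gamma$ (since $e \in \M_n(\cal{A})$ and the representation intertwines the gradings), so it respects the splitting $X^{\oplus n} = X_+^{\oplus n} \oplus X_-^{\oplus n}$ and decomposes as $e_n = e_n^+ \oplus e_n^-$ with $e_n^\pm$ idempotents on $X_\pm^{\oplus n}$.

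Next I would analyze the operator $T \ov{\mathrm{def}}{=} e_n (\Id \ot F_+) e_n$ viewed as a map from $e_n(X_+^{\oplus n})$ to $e_n(X_-^{\oplus n})$. The key algebraic fact is that $\Id \ot F_+$ almost commutes with $e_n$: since $[\Id \ot F, \Id \ot \pi(e)]$ is compact (axiom 2 for the amplified module, as $e$ is a finite matrix over $\cal{A}$), we get $[\Id \ot F_+, e_n^+]$, $[\Id \ot F_+, e_n^-]$ compact, hence
\begin{equation}
\label{eq-almost-comm}
e_n (\Id \ot F_+) e_n
= e_n (\Id \ot F_+) + K_1
= (\Id \ot F_+) e_n + K_2
\end{equation}
with $K_1, K_2$ compact. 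Then I would produce a parametrix: set $S \ov{\mathrm{def}}{=} e_n (\Id \ot F_-) e_n \co e_n(X_-^{\oplus n}) \to e_n(X_+^{\oplus n})$, and compute $S T$ and $T S$ on the respective subspaces. Using \eqref{eq-almost-comm} to move the middle $e_n$'s past $\Id \ot F_\pm$ modulo compacts, together with $(\Id \ot F)^2 - \Id_{X^{\oplus n}}$ compact (axiom 1, which gives $(\Id \ot F_-)(\Id \ot F_+) - \Id = $ compact on $X_+^{\oplus n}$ and symmetrically), one finds $ST = e_n^+ + (\text{compact})$ and $TS = e_n^- + (\text{compact})$. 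But $e_n^+$ and $e_n^-$ are precisely the identity operators on $e_n(X_+^{\oplus n})$ and $e_n(X_-^{\oplus n})$ respectively, so $S$ is a two-sided parametrix for $T$ modulo compact operators; by Atkinson's theorem in the Banach setting (using that an operator invertible modulo compacts is Fredholm, cf. \cite[Theorem 4.43 p.~158]{AbA02} and the surrounding material) this shows $T$ is Fredholm.

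The main obstacle I anticipate is bookkeeping the compact error terms correctly on the cut-down spaces $e_n(X_\pm^{\oplus n})$ rather than on all of $X_\pm^{\oplus n}$: one must check that restriction and corestriction along the bounded idempotents $e_n^\pm$ carry compact operators to compact operators (immediate, since compact operators form an ideal and $e_n^\pm$ are bounded) and that the identity of $e_n(X_+^{\oplus n})$ really is $e_n^+$ acting there, not some other idempotent — this is where one uses that $e_n(X_+^{\oplus n})$ is by definition the range of $e_n^+$, so $e_n^+$ restricted to its own range is the identity. A secondary subtlety is that in the Banach (as opposed to Hilbert) category one does not have adjoints, so the parametrix must be constructed explicitly from $F_-$ as above rather than by any $*$-operation; but the even Fredholm module axioms are tailored exactly so that $F_-$ serves this role, so no genuine difficulty arises. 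Once the parametrix identities hold modulo compacts, Fredholmness and finiteness of kernel and cokernel are automatic from the cited Banach Fredholm theory.
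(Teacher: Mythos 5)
The paper does not give its own proof of this theorem; it simply cites \cite{Arh26a}, so there is no in-paper argument to compare against. Your proposal is the standard Atkinson--parametrix argument, and it is correct: you amplify to $X^{\oplus n}$ (noting that finite direct sums and finite matrices of compact operators remain compact, so the three Fredholm-module axioms survive amplification over $\M_n(\cal{A})$), observe that $e_n$ commutes with $\Id \ot \gamma$ and hence splits as $e_n^+ \oplus e_n^-$ with $e_n^\pm$ bounded idempotents on $X_\pm^{\oplus n}$, take $S \ov{\mathrm{def}}{=} e_n(\Id \ot F_-)e_n$ as an explicit parametrix, and verify that $ST - \Id$ and $TS - \Id$ are compact on the cut-down subspaces using compactness of $[\Id \ot F, e_n]$ and of $(\Id \ot F)^2 - \Id$. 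One notational slip: the expressions $[\Id \ot F_+, e_n^+]$ and $[\Id \ot F_+, e_n^-]$ do not type-check, since $\Id \ot F_+ \co X_+^{\oplus n} \to X_-^{\oplus n}$ is not an endomorphism of either graded piece; what axiom~2 actually delivers is compactness of the off-diagonal block $(\Id \ot F_+)e_n^+ - e_n^-(\Id \ot F_+)$ of $[\Id \ot F, e_n]$, which is precisely the relation your subsequent displayed equation uses, so the argument stands. You also correctly flag the genuine subtleties: compactness is preserved under restriction and corestriction along the bounded idempotents $e_n^\pm$ (compacts form a two-sided ideal), $e_n^\pm$ acts as the identity on its own range so the cut-down parametrix identities really read $ST \equiv \Id$ and $TS \equiv \Id$ modulo compacts, and $F_-$ plays the role of the missing adjoint in the Banach setting, which is exactly what the axioms of an even Banach Fredholm module are designed to supply.
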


Consequently, we have a pairing $\la \cdot, \cdot \ra \to \Z$ defined by
\begin{equation}
\label{pairing-even-1}
\big\la [e], [(X,\pi,F,\gamma)] \big\ra 
\ov{\mathrm{def}}{=} \Index e_n (\Id \ot F_+) e_n.
\end{equation}

The following result is proved in \cite{Arh26a}.
\begin{prop} 
\label{prop-triple-to-Fredholm}
Let $(\cal{A},X, D)$ be a compact Banach spectral triple over an algebra $\cal{A}$ via a homomorphism $\pi \co \cal{A} \to \B(X)$. Then $(X,\pi,\sgn D)$ is a Banach Fredholm module over $\cal{A}$.
\end{prop}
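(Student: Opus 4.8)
The plan is to verify that the triple $(X,\pi,\sgn D)$ satisfies the axioms of a Banach Fredholm module over $\cal{A}$, using the three defining properties of a compact Banach spectral triple. First I would record that since $D$ admits a bounded $\H^\infty(\Sigma^\bi_\theta)$ functional calculus and $\sgn$ is a bounded analytic function on the bisector that is not in $\H^\infty_0$, one must make sense of $\sgn D$ by the extended functional calculus; here the decomposition \eqref{decompo-space-bisect} $X = \ovl{\Ran D} \oplus \ker D$ (valid because $X$ is reflexive) is exactly what is needed, since $D$ restricted to $\ovl{\Ran D}$ has dense range, so the extended functional calculus applies on that summand, and we set $\sgn D$ to be zero (or the identity, depending on the module convention) on $\ker D$. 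In any case $F \ov{\mathrm{def}}{=} \sgn D$ is a bounded operator on $X$ by \eqref{funct-cal-bisector} and the extension procedure, and since $\sgn^2 = 1$ on $\Sigma^\bi_\theta$, multiplicativity of the functional calculus gives $F^2 = \Id$ on $\ovl{\Ran D}$; combined with the grading-type behaviour on $\ker D$ this yields $F^2 - \Id$ has range inside $\ker D$, which is finite-dimensional by the compact-resolvent hypothesis and \cite[Theorem 6.29 p.~187]{Kat76}. Hence $F^2 - \Id$ is a finite-rank, in particular compact, operator on $X$.

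Next I would check the commutator condition: for $a \in \Lip_D(\cal{A})$ the operator $[F,\pi(a)] = [\sgn D, \pi(a)]$ must be compact. The standard argument is to approximate $\sgn$ by functions in $\H^\infty_0(\Sigma^\bi_\theta)$ and exploit that $[D,\pi(a)]$ is bounded (the defining condition of $\Lip_D(\cal{A})$) together with the compactness of the resolvent of $D$: writing the functional-calculus integral \eqref{def-f(D)} for an approximant $f$ and commuting $\pi(a)$ through the resolvents produces expressions of the form $R(z,D)[D,\pi(a)]R(z,D)$, each of which is compact because $R(z,D)$ is compact. Taking limits — uniformly in the appropriate sense, using the uniform boundedness of the $\H^\infty$ calculus — shows $[\sgn D,\pi(a)]$ is a norm limit of compact operators, hence compact. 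Since $\Lip_D(\cal{A})$ is dense in $\cal{A}$ and $a \mapsto [\sgn D,\pi(a)]$ is continuous (being a difference of two compositions of the continuous maps $a \mapsto \pi(a)$ and left/right multiplication by the bounded operator $\sgn D$), the set of $a$ for which $[\sgn D,\pi(a)]$ is compact is closed and contains a dense subset, hence is all of $\cal{A}$; so the Fredholm-module commutator axiom holds for every $a \in \cal{A}$.

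Finally, in the even case, I would check compatibility with the grading $\gamma$: from $D\gamma = -\gamma D$ one gets $\gamma R(z,D)\gamma = -R(-z,D)$, and feeding this into \eqref{def-f(D)} together with the fact that $\sgn$ is an odd function ($\sgn(-z) = -\sgn z$) gives $\gamma (\sgn D) \gamma = -\sgn D$, i.e. $\sgn D$ is odd for the grading, while $\gamma\pi(a) = \pi(a)\gamma$ is assumed; this is precisely the evenness requirement for a graded Banach Fredholm module. It then remains to assemble these facts against the precise definition of Banach Fredholm module in \cite{Arh26a} and conclude. The main obstacle I anticipate is the careful handling of the extended functional calculus for the non-decaying symbol $\sgn$ on the kernel of $D$ and the attendant bookkeeping to get $F^2 = \Id$ exactly modulo compacts rather than merely on a subspace — i.e. making the limiting/approximation arguments rigorous rather than formal — but the compactness of the resolvent, which forces $\ker D$ to be finite-dimensional, is what makes all the "modulo compact" statements go through cleanly.
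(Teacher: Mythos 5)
The paper does not prove this proposition; it is cited from \cite{Arh26a}, so there is no in-paper proof to compare against, and your proposal can only be judged on its own merits. Your overall shape is the expected one: define $\sgn D$ via the decomposition $X = \ovl{\Ran D}\oplus \ker D$ and the extended functional calculus (with $\sgn D = 0$ on $\ker D$, consistent with the identity $\sgn D_p = D_p|D_p|^{-1}$ the paper uses later); note that $F^2 - \Id = -P_0$ where $P_0$ is the projection onto $\ker D$, which is finite-rank because the compact resolvent forces $\ker D$ to be finite-dimensional; prove compactness of $[F,\pi(a)]$ for $a\in\Lip_D(\cal{A})$ using the resolvent identity $[R(z,D),\pi(a)] = -R(z,D)[D,\pi(a)]R(z,D)$ and the compactness of $R(z,D)$; and extend to all of $\cal{A}$ by density and by continuity of $a\mapsto [F,\pi(a)]$. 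The grading computation for the even case is also correct, though it is not required for the ungraded statement being proved.

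There is, however, a genuine gap in the central step. You assert that $[\sgn D, \pi(a)]$ is a \emph{norm} limit of the compact commutators $[f_n(D),\pi(a)]$ for $\H^\infty_0$-approximants $f_n\to\sgn$, attributing this to ``the uniform boundedness of the $\H^\infty$ calculus.'' Uniform boundedness of the calculus plus bounded pointwise convergence only yields \emph{strong} convergence $f_n(D)\to\sgn(D)$ on $\ovl{\Ran D}$ (the convergence lemma), and strong limits of compact operators need not be compact, so the compactness of $[\sgn D,\pi(a)]$ does not follow as you state. To close the gap one should write $[\sgn D,\pi(a)]$ directly as the Cauchy integral $-\frac{1}{2\pi\i}\int_{\partial\Sigma^\bi_\nu}\sgn(z)\,R(z,D)[D,\pi(a)]R(z,D)\,\d z$: the integrand is $O(|z|^{-2})$ at infinity (so the integral converges there), but when $\ker D\neq 0$ the resolvent has a simple pole at $z=0$ and the integrand is also $O(|z|^{-2})$ near the origin, which a priori diverges on the rays. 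One must then observe that the worst singular contribution $z^{-2}P_0[D,\pi(a)]P_0$ vanishes because $DP_0 = P_0D = 0$ on $\dom D$, or equivalently split off the finite-rank Riesz projection $P_0$ and run the contour argument for the restriction of $D$ to $\Ran(\Id - P_0)$, where $0$ is in the resolvent set and the Bochner integral converges absolutely in the closed ideal of compact operators. Without some such cancellation/contour-deformation argument, the key compactness assertion is not justified.
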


Now, we prove the first main result of this section.

\begin{thm}
\label{thm:Euler-characteristic-Lp}
Let $(M,g)$ be a smooth compact Riemannian manifold. Suppose that $1 < p < \infty$. Let $[1] \in \K_0(\C(M))$ be the $\K$-theory class of the constant function $1$. Then the linear operator $D_{p,+} \co \W^{1,p}(\Omega^{\even}(M)) \to \L^p(\Omega^{\odd}(M))$ is Fredholm and we have
\[
\big\la [1],[\L^p(\Omega^\bullet(M)),\pi,\sgn D_p,\gamma] \big\ra
= \Index D_{p,+}.
\]
\end{thm}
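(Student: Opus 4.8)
The plan is to reduce the statement to the pairing formula~\eqref{pairing-even-1} together with a polar‑decomposition identity linking $\sgn D_p$ to the isomorphism $\scr{D}$ of Proposition~\ref{prop-isomorphism}. First I would set up the Fredholm‑module side. By Theorem~\ref{thm:even-Hodge-Dirac}, $(\C(M),\L^p(\Omega^\bullet(M)),D_p,\gamma)$ is an even compact Banach spectral triple, with $\gamma$ acting by $(-1)^k$ on $k$‑forms, so the grading splits $\L^p(\Omega^\bullet(M))=X_+\oplus X_-$ with $X_+=\L^p(\Omega^{\even}(M))$ and $X_-=\L^p(\Omega^{\odd}(M))$. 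By Proposition~\ref{prop-triple-to-Fredholm}, $(\L^p(\Omega^\bullet(M)),\pi,\sgn D_p)$ is a Banach Fredholm module over $\C(M)$; since in addition $\gamma^2=\Id$, $\gamma\pi(f)=\pi(f)\gamma$ (Theorem~\ref{thm:even-Hodge-Dirac}) and $\sgn D_p$ anticommutes with $\gamma$ ($\sgn$ being odd and $D_p\gamma=-\gamma D_p$), it is an even Banach Fredholm module. Applying Theorem~\ref{th-pairing-even} and~\eqref{pairing-even-1} to the idempotent $e=1\in\C(M)$, for which $e_1=\pi(1)=\Id$, one gets that the operator $(\sgn D_p)_+\co X_+\to X_-$ is Fredholm and that $\big\langle [1],[\L^p(\Omega^\bullet(M)),\pi,\sgn D_p,\gamma]\big\rangle=\Index (\sgn D_p)_+$. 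It then remains to prove that $D_{p,+}$ is Fredholm with the same index as $(\sgn D_p)_+$.

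The heart of the argument is the identity $\sgn D_p=D_p\circ\scr{D}^{-1}$ of bounded operators on $\L^p(\Omega^\bullet(M))$, where $\scr{D}=|D_p|+P_p\co\W^{1,p}(\Omega^\bullet(M))\xrightarrow{\ \cong\ }\L^p(\Omega^\bullet(M))$ is the isomorphism of Proposition~\ref{prop-isomorphism} and $D_p\co\W^{1,p}(\Omega^\bullet(M))\to\L^p(\Omega^\bullet(M))$ is bounded by Proposition~\ref{prop:graph-norm-Hodge-Dirac-Lp} together with~\eqref{equivalences-Sobolev-W1}. I would verify this identity separately on the two summands of $\L^p(\Omega^\bullet(M))=\cal{H}(M)\oplus E_p$, with $E_p=\ker P_p=\ovl{\Ran D_p}$. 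On $\cal{H}(M)=\ker D_p$ one has $\scr{D}^{-1}=\Id$, hence $D_p\circ\scr{D}^{-1}=0$, while also $\sgn D_p=0$ there, since the bisectorial $\H^\infty$ functional calculus makes $\sgn D_p$ vanish on $\ker D_p$ and restricts it to an involution on $\ovl{\Ran D_p}$. For $\eta\in E_p$ the element $\xi\defeq\scr{D}^{-1}\eta$ lies in $F_p=\W^{1,p}(\Omega^\bullet(M))\cap\ker P_p$, because $\scr{D}$ coincides on $F_p$ with $|D_p|$, which by the proof of Proposition~\ref{prop-isomorphism} is an isomorphism $F_p\xrightarrow{\ \cong\ }E_p$; hence $|D_p|\xi=\scr{D}\xi=\eta$ and therefore $D_p\circ\scr{D}^{-1}\eta=D_p\xi=(\sgn D_p)|D_p|\xi=(\sgn D_p)\eta$, using the polar decomposition $D_p\omega=(\sgn D_p)|D_p|\omega$, valid for $\omega\in\dom D_p$. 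This establishes $\sgn D_p=D_p\circ\scr{D}^{-1}$.

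Since $\Delta_{\HdR}$ and the harmonic projection both preserve the form degree, the operators $|D_p|=\Delta_{\HdR,p}^{1/2}$ and $P_p$ commute with $\gamma$, so $\scr{D}$ is block‑diagonal for the even/odd splitting, $\scr{D}=\scr{D}_+\oplus\scr{D}_-$ with each $\scr{D}_\pm$ an isomorphism, whereas $D_p$ is off‑diagonal with lower‑left corner $D_{p,+}$. Comparing the lower‑left blocks in $\sgn D_p=D_p\circ\scr{D}^{-1}$ gives $(\sgn D_p)_+=D_{p,+}\circ\scr{D}_+^{-1}$, that is $D_{p,+}=(\sgn D_p)_+\circ\scr{D}_+$. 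As $(\sgn D_p)_+$ is Fredholm and $\scr{D}_+$ is an isomorphism (hence Fredholm of index $0$), the composition rule~\eqref{composition-Index} shows that $D_{p,+}$ is Fredholm with $\Index D_{p,+}=\Index(\sgn D_p)_+$. Combining this with the pairing computation of the first paragraph yields $\big\langle [1],[\L^p(\Omega^\bullet(M)),\pi,\sgn D_p,\gamma]\big\rangle=\Index D_{p,+}$, as required.

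The main obstacle is the identity $\sgn D_p=D_p\circ\scr{D}^{-1}$, which rests on two standard but not wholly trivial properties of the bisectorial $\H^\infty$ calculus of $D_p$: that $\sgn D_p$ annihilates $\ker D_p$, and that $D_p=(\sgn D_p)|D_p|$ on $\dom D_p$ (reflecting the pointwise identity $\sgn(z)\sqrt{z^2}=z$ on the bisector). Both must be handled carefully, since $|D_p|$ and $\sgn D_p$ are defined through the functional calculus rather than by explicit formulas; the remaining ingredients — the $\L^p$‑Hodge decomposition, the compatibility of $\scr{D}$ with the grading, and additivity of the Fredholm index under composition — are routine once those two facts are granted.
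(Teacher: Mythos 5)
Your proof is correct and follows the paper's argument essentially verbatim: Theorem~\ref{th-pairing-even} identifies the pairing with $\Index(\sgn D_p)_+$, and the factorization $D_{p,+}=(\sgn D_p)_+\scr{D}_+$ through the isomorphism $\scr{D}_+$ of Proposition~\ref{prop-isomorphism} transfers Fredholmness and the index to $D_{p,+}$. The only cosmetic difference is that you verify the key identity $\sgn D_p = D_p\circ\scr{D}^{-1}$ by a case analysis on $\cal{H}(M)\oplus E_p$, whereas the paper computes $\sgn(D_p)\scr{D}=\sgn(D_p)(|D_p|+P_p)=D_p$ directly in one line using $\sgn(D_p)P_p=0$.
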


\begin{proof}
By Theorem~\ref{thm:even-Hodge-Dirac}, the quadruple $(\C(M),\L^p(\Omega^\bullet(M)),D_p,\gamma)$ is an even compact Banach spectral triple. By Proposition~\ref{prop-triple-to-Fredholm}, it gives rise to an even Banach Fredholm module $
\big(\L^p(\Omega^\bullet(M)),\pi,\sgn(D_p),\gamma\big)$ over the algebra $\C(M)$, where $\pi$ is the representation by pointwise multiplication. Moreover, one has
\begin{equation}
\label{eq:sign-Dp-factorization}
\sgn D_p 
= D_p |D_p|^{-1},
\end{equation}
where $|D_p| \ov{\mathrm{def}}{=} (D_p^2)^{\frac12}$ is defined by the sectorial functional calculus of the sectorial operator $D_p^2$. On the subspace $\ker D_p$, we have $\sgn D_p = 0$ and $|D_p|^{-1}=0$. Note that by Theorem \ref{thm:even-Hodge-Dirac} we have $\gamma D_{p}=-D_{p}\gamma$. Since the sign function is odd, we obtain by \cite[Proposition 3.2.10]{Ege15} the equality $
\gamma \sgn(D_{p})
=
-\sgn(D_{p})\gamma$. Thus, with respect to the decomposition \eqref{decompo-Lp-even-odd}, we can write with Proposition \ref{prop-anticommuting-symmetry-unbounded}
\[
\sgn D_p
=
\begin{bmatrix}
0 & (\sgn D_p)_-\\
(\sgn D_p)_+ & 0
\end{bmatrix},
\]
where $
(\sgn D_p)_+ \co \L^p(\Omega^{\even}(M)) \to \L^p(\Omega^{\odd}(M))$ 
and $
(\sgn D_p)_- \co \L^p(\Omega^{\odd}(M)) \to \L^p(\Omega^{\even}(M))$ are bounded operators.

We now apply Theorem \ref{th-pairing-even} with $
\cal{A} = \C(M)$, $X_+ \ov{\mathrm{def}}{=} \L^p(\Omega^{\even}(M))$ and
$X_- \ov{\mathrm{def}}{=} \L^p(\Omega^{\odd}(M))$, the bounded operator $F \ov{\mathrm{def}}{=} \sgn D_p$ and the projection $e = 1 \in \M_1(\C(M))$. Since  $e_1 = (\Id \ot \pi)(1) = \Id_{\L^p(\Omega^\bullet(M))}$ the operator
\[
(\sgn D_p)_+
=e_1(\Id \ot F_+)e_1
\co \L^p(\Omega^{\even}(M)) \to \L^p(\Omega^{\odd}(M))
\]
is Fredholm and we have
\begin{equation}
\label{eq:pairing-signDp-plus}
\big\la [1],[\L^p(\Omega^\bullet(M)),\pi,\sgn D_p,\gamma] \big\ra
=
\Index(\sgn D_p)_+.
\end{equation}
Note that
\[
D_p^2
\ov{\eqref{Dp-decompo-block}}{=}
\begin{bmatrix}
D_{p,-}D_{p,+} & 0\\
0 & D_{p,+}D_{p,-}
\end{bmatrix}.
\]
By functional calculus, we thus have
\[
|D_p|
=
(D_p^2)^{\frac12}
=
\begin{bmatrix}
|D_p|_+ & 0\\
0 & |D_p|_-
\end{bmatrix},
\]
where $|D_p|_+ \ov{\mathrm{def}}{=} (D_{p,-}D_{p,+})^{\frac12}$ and $
|D_p|_- \ov{\mathrm{def}}{=} (D_{p,+}D_{p,-})^{\frac12}$. 
The identity $D_p = \sgn(D_p)|D_p|$ which follows from \eqref{eq:sign-Dp-factorization} translates in block form as
\[
\begin{bmatrix}
0 & D_{p,-}\\
D_{p,+} & 0
\end{bmatrix}
=
\begin{bmatrix}
0 & (\sgn D_p)_-\\
(\sgn D_p)_+ & 0
\end{bmatrix}
\begin{bmatrix}
|D_p|_+ & 0\\
0 & |D_p|_-
\end{bmatrix}.
\]
On the even component, we obtain in particular
\begin{equation}
\label{eq:Dp-plus-factorization}
D_{p,+} 
= (\sgn D_p)_+\,|D_p|_+.
\end{equation} 
The kernel of $D_p$ coincides with the finite-dimensional space $\cal{H}(M)$ of smooth harmonic forms. The harmonic projection $P_p \co \L^p(\Omega^\bullet(M)) \to \L^p(\Omega^\bullet(M))$ is a finite-rank operator. It preserves the $\Z_2$-grading, so with respect to the decomposition \eqref{decompo-Lp-even-odd} it has the block form
\[
P_p 
=\begin{bmatrix}
P_+ & 0\\
0 & P_-
\end{bmatrix}.
\]
Since $|D_p|$ is even and $P_p$ preserves the grading, the operator $\scr{D} \ov{\eqref{def-de-scrD}}{=} |D_p| + P_p$ is also even and by Proposition \ref{prop-anticommuting-symmetry-unbounded} can be written as
\[
\scr{D} =
\begin{bmatrix}
\scr{D}_+ & 0\\
0 & \scr{D}_-
\end{bmatrix}
\]
for appropriate operators $\scr{D}_\pm$. By Proposition \ref{prop-isomorphism}, the map $
\scr{D} \co \W^{1,p}(\Omega^\bullet(M)) \xrightarrow{\ \cong\ } \L^p(\Omega^\bullet(M))$ 
is an isomorphism of Banach spaces. Moreover, the operator $\sgn(D_p)$ vanishes on the subspace $\ker D_p$, hence $
\sgn(D_p) P_p = 0$. This gives
\[
\sgn(D_p)\,\scr{D}
\ov{\eqref{def-de-scrD}}{=} \sgn(D_p)\,(|D_p| + P_p)
= \sgn(D_p)\,|D_p|
= D_p.
\]
Restricting \eqref{eq:S-iso} to the even part, we obtain an isomorphism
\[
\scr{D}_+ 
\ov{\mathrm{def}}{=} \scr{D}\big|_{\W^{1,p}(\Omega^{\even}(M))}
\co \W^{1,p}(\Omega^{\even}(M)) \xrightarrow{\ \cong\ } X_+.
\]
With respect to the splitting $X_+ \oplus X_-$, the factorization \(D_p = \sgn(D_p) \scr{D}\) reads
\[
\begin{bmatrix}
0 & D_{p,-}\\
D_{p,+} & 0
\end{bmatrix}
=
\begin{bmatrix}
0 & (\sgn D_p)_-\\
(\sgn D_p)_+ & 0
\end{bmatrix}
\begin{bmatrix}
\scr{D}_+ & 0\\
0 & \scr{D}_-
\end{bmatrix},
\]
so, on the even component, we obtain
\begin{equation}
\label{eq:Dp-plus-factorization-with-S}
D_{p,+} 
= (\sgn D_p)_+ \scr{D}_+
\co \W^{1,p}(\Omega^{\even}(M)) \to X_-.
\end{equation}
Recall that the operator $(\sgn D_p)_+ \co X_+ \to X_-$ is Fredholm. Since the map $\scr{D}_+$ is an isomorphism by \eqref{eq:S-iso}, it is a Fredholm operator with index 0. So the composition \eqref{eq:Dp-plus-factorization-with-S} is Fredholm as well, and
\[
\Index D_{p,+}
\ov{\eqref{composition-Index}}{=} \Index (\sgn D_p)_+ + \Index \scr{D}_+
= \Index (\sgn D_p)_+.
\]
\end{proof}

\begin{prop}
\label{prop-ker-HD}
Let $(M,g)$ be a smooth compact Riemannian manifold. Suppose that $1 < p < \infty$. We have $\ker D_{p,+} = \cal{H}^{\even}(M)$.
\end{prop}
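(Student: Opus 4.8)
The plan is to reduce the computation of $\ker D_{p,+}$ to the already-known identity $\ker D_p = \cal{H}(M)$ for the full Hodge--Dirac operator. The essential point is that the block decomposition \eqref{Dp-decompo-block} is compatible with the grading by parity of the form degree: by Proposition~\ref{prop:graph-norm-Hodge-Dirac-Lp} together with \eqref{equivalences-Sobolev-W1}, the domain of $D_p$ equals $\W^{1,p}(\Omega^\bullet(M))$, which by its very definition in charts (see \eqref{Sobolev-charts}) splits as $\W^{1,p}(\Omega^{\even}(M)) \oplus \W^{1,p}(\Omega^{\odd}(M))$; moreover, on a homogeneous form of even degree $k$ the operator $D = \d + \d^*$ produces components of degree $k+1$ and $k-1$, both odd, so $D$ maps even forms to odd forms, and this passes to the closure. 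Hence for $\omega \in \W^{1,p}(\Omega^{\even}(M))$ one has $D_{p,+}\omega = D_p\omega$, and in particular $D_{p,+}\omega = 0$ if and only if $\omega \in \ker D_p$.

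From here both inclusions are immediate. First, if $\omega \in \cal{H}^{\even}(M)$, then $\omega$ is a smooth even form satisfying $\Delta_{\HdR}\omega = 0$, hence $\d\omega = 0$ and $\d^*\omega = 0$ by \cite[Proposition 7.5.2 p.~539]{AMR88}, so $D\omega = 0$; since a smooth form on the compact manifold $M$ belongs to every Sobolev space, $\omega \in \W^{1,p}(\Omega^{\even}(M))$ and therefore $D_{p,+}\omega = D_p\omega = 0$. Conversely, if $\omega \in \W^{1,p}(\Omega^{\even}(M))$ satisfies $D_{p,+}\omega = 0$, the compatibility above gives $D_p\omega = 0$, so $\omega \in \ker D_p = \cal{H}(M)$; writing $\cal{H}(M) = \bigoplus_{k=0}^d \cal{H}^k(M)$ and using that $\omega$ has no odd-degree component, we conclude $\omega \in \cal{H}^{\even}(M)$. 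Combining the two inclusions yields $\ker D_{p,+} = \cal{H}^{\even}(M)$.

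The only delicate point --- and the one I would spell out with slightly more care --- is the grading-compatibility of the domain, i.e.\ that $\dom D_p$ is stable under the splitting into even and odd forms and that $D_p$ restricted to $\W^{1,p}(\Omega^{\even}(M))$ coincides with $D_{p,+}$. This can be seen either directly from the chart definition of $\W^{1,p}(\Omega^\bullet(M))$ as above, or invariantly: the grading operator $\gamma$ of Theorem~\ref{thm:even-Hodge-Dirac} is bounded and anticommutes with $D_p$, so $\dom D_p$ is $\gamma$-invariant and decomposes into the eigenspaces of $\gamma$ for the eigenvalues $\pm 1$, which are exactly $\W^{1,p}(\Omega^{\even}(M))$ and $\W^{1,p}(\Omega^{\odd}(M))$. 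Once this is in place, the statement follows with no further computation.
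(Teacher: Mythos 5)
Your proof is correct and follows essentially the same route as the paper. Both start from the same first inclusion (harmonic even forms are smooth, hence lie in $\dom D_{p,+}$ and are killed by $D_{p,+}$), and for the converse both pass from $D_{p,+}\omega = 0$ to $D_p\omega = 0$. The only presentational difference is that the paper inlines the argument for the converse — it applies $D_p$ a second time to get $\Delta_{\HdR,p}\omega = 0$ and then invokes the $\L^p$-Gaffney identification \cite[p.~3624]{Li09} for smoothness and harmonicity — whereas you appeal directly to the identity $\ker D_p = \cal{H}(M)$, which the paper itself uses as a known fact in the proof of Proposition~\ref{prop-isomorphism}. Your extra remark about the $\gamma$-invariance of $\dom D_p$ and the compatibility of the block decomposition \eqref{Dp-decompo-block} with the grading is a point the paper leaves implicit; it is a genuine small clarification and does no harm.
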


\begin{proof}
We first determine the kernel of $D_{p,+}$. If $\omega \in \cal{H}^{\even}(M)$, then by \cite[Proposition 7.5.2 p.~539]{AMR88} we have $\d \omega = 0$ and $\d^* \omega = 0$. So $D\omega \ov{\eqref{def-Hodge-Dirac}}{=} \d \omega+\d^* \omega= 0$. Since $\omega$ is smooth, it belongs to the domain of $D_{p,+}$ and we have \(D_{p,+}\omega  = 0\). Hence
$\cal{H}^{\even}(M) \subset
\ker D_{p,+}$.

Conversely, if $\omega \in \ker D_{p,+}$ then in particular $\omega \in \dom D_{p,+} \subset \L^{p}(\Omega^{\even}(M))$. By definition of $D_{p,+}$ as a restriction of $D_p$, we have $D_p \omega
=D_{p,+}\omega
=0$. Applying $D_p$ once more and using Proposition \ref{prop:closure-kodaira-square-dirac} we obtain $
\Delta_{\HdR,p} \omega 
= D_p^2 \omega 
=0$. By Proposition \ref{prop-harmonic-Lp}, we obtain $\omega \in \cal{H}(M)$, this implies that $\omega$ is smooth and harmonic. Since $\omega \in \L^{p}(\Omega^{\even}(M))$, we conclude that $\omega \in \cal{H}^{\even}(M)$. We have obtained the inclusion $\ker D_{p,+} \subset \cal{H}^{\even}(M)$.
\end{proof}

\begin{prop}
\label{prop-coker-HD}
Let $(M,g)$ be a smooth compact Riemannian manifold. Suppose that $1 < p < \infty$. We have an isomorphism
\begin{equation}
\label{description-coker}
\coker D_{p,+} 
\cong \cal{H}^{\odd}(M).
\end{equation}
\end{prop}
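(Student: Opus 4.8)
The plan is to identify $\coker D_{p,+}$ with the odd part of the Hodge decomposition of $X \ov{\mathrm{def}}{=} \L^p(\Omega^\bullet(M))$ induced by the bisectorial structure of $D_p$. First I would record that $D_{p,+}$ is Fredholm — this is part of Theorem~\ref{thm:Euler-characteristic-Lp} — so that $\Ran D_{p,+}$ is a closed subspace of finite codimension in $\L^p(\Omega^{\odd}(M))$; hence $\coker D_{p,+}$ is finite-dimensional and it suffices to produce a closed complement of $\Ran D_{p,+}$ that is isomorphic to $\cal{H}^{\odd}(M)$. Next, since $(\C(M),\L^p(\Omega^\bullet(M)),D_p)$ is a compact Banach spectral triple (Theorem~\ref{thm:locally-compact-Hodge-Dirac}) on the reflexive Banach space $X$, the decomposition \eqref{decompo-space-bisect} provides a topological direct sum $X=\ovl{\Ran D_p}\oplus\ker D_p$. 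By Proposition~\ref{prop-ker-HD} together with its verbatim odd analogue, and the block form \eqref{Dp-decompo-block}, one has $\ker D_p=\ker D_{p,+}\oplus\ker D_{p,-}=\cal{H}^{\even}(M)\oplus\cal{H}^{\odd}(M)=\cal{H}(M)$.

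The crucial observation is that this decomposition respects the $\Z_2$-grading. Because $D_p\gamma=-\gamma D_p$, both $\Ran D_p$ and $\ker D_p$ are $\gamma$-invariant, and hence so is $\ovl{\Ran D_p}$; intersecting $X=\ovl{\Ran D_p}\oplus\ker D_p$ with $\L^p(\Omega^{\odd}(M))$ (using the bounded grading projections $\tfrac{\Id\pm\gamma}{2}$) yields
\[
\L^p(\Omega^{\odd}(M))
=\big(\ovl{\Ran D_p}\cap\L^p(\Omega^{\odd}(M))\big)\oplus\cal{H}^{\odd}(M).
\]
It then remains to show $\ovl{\Ran D_p}\cap\L^p(\Omega^{\odd}(M))=\Ran D_{p,+}$. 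From \eqref{Dp-decompo-block} we have $\Ran D_p=\Ran D_{p,+}+\Ran D_{p,-}$ with $\Ran D_{p,+}\subset\L^p(\Omega^{\odd}(M))$ and $\Ran D_{p,-}\subset\L^p(\Omega^{\even}(M))$; applying the bounded projection onto $\L^p(\Omega^{\odd}(M))$ to an arbitrary convergent sequence $a_n+b_n$ with $a_n\in\Ran D_{p,+}$, $b_n\in\Ran D_{p,-}$ shows that $\ovl{\Ran D_p}\cap\L^p(\Omega^{\odd}(M))=\ovl{\Ran D_{p,+}}$, and this equals $\Ran D_{p,+}$ since $D_{p,+}$ is Fredholm. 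Combining, $\L^p(\Omega^{\odd}(M))=\Ran D_{p,+}\oplus\cal{H}^{\odd}(M)$, whence $\coker D_{p,+}=\L^p(\Omega^{\odd}(M))/\Ran D_{p,+}\cong\cal{H}^{\odd}(M)$.

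The only genuinely delicate point is the bookkeeping with closures and the grading in this last identification — one must use that the closure of a sum of subspaces lying in complementary closed graded summands is controlled by the bounded grading projections; everything else (finite-dimensionality of the cokernel, the identification $\ker D_p=\cal{H}(M)$, the $\gamma$-invariance of $\ovl{\Ran D_p}$) is routine given the results already at hand. As an alternative, one could argue directly from the $\L^p$-Hodge decomposition \eqref{eq:Lp-Hodge-decomp}, checking that $\Ran D_{p,+}$ is exactly the sum over odd $k$ of the exact and coexact components of $\L^p(\Omega^k(M))$; but the bisectorial route above is shorter and keeps everything inside the functional-analytic framework of the paper.
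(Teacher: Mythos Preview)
Your argument is correct and complete. The decomposition $X=\ovl{\Ran D_p}\oplus\ker D_p$ together with $\gamma$-invariance of both summands, the identification $\ker D_p=\cal{H}(M)$, and the Fredholm property of $D_{p,+}$ assemble exactly as you describe to give $\L^p(\Omega^{\odd}(M))=\Ran D_{p,+}\oplus\cal{H}^{\odd}(M)$.

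The paper, however, takes precisely the alternative you mention at the end: it works degree by degree with the $\L^p$-Hodge decomposition \eqref{eq:Lp-Hodge-decomp}. Concretely, it observes that for an even form $u=\sum_{m\ \even} u_m$ the degree-$k$ component of $Du$ (for $k$ odd) is $\d u_{k-1}+\d^*u_{k+1}$, whence $\Ran D_{p,+}\cap\L^p(\Omega^k(M))=\d\W^{1,p}(\Omega^{k-1}(M))+\d^*\W^{1,p}(\Omega^{k+1}(M))$; Scott's decomposition then identifies the quotient in each degree with $\cal{H}^k(M)$, and one sums over odd $k$. Your route is shorter, stays entirely within the abstract bisectorial framework, and avoids invoking \eqref{eq:Lp-Hodge-decomp} a second time (it is already implicit via Proposition~\ref{prop-ker-HD}); the paper's route, on the other hand, gives the slightly finer information that the range of $D_{p,+}$ splits degree by degree into the exact-plus-coexact summands, and does not need to quote the Fredholm property of $D_{p,+}$ from Theorem~\ref{thm:Euler-characteristic-Lp} as an input.
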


\begin{proof}
Let $k$ be an odd integer. We claim that
\begin{equation}
\label{eq:range-degree-k-precise}
\Ran D_{p,+} \cap \L^p(\Omega^k(M))
=
\d_p \W^{1,p}(\Omega^{k-1}(M))
+
(\d^*)_p \W^{1,p}(\Omega^{k+1}(M)).
\end{equation}
We first prove the inclusion ``$\subset$''. Let $\eta \in \Ran D_{p,+} \cap \L^p(\Omega^k(M))$. Then there exists $u$ belonging to the subspace $\dom D_{p,+} = \W^{1,p}(\Omega^{\even}(M))$ such that $\eta = D_{p,+}u$. Write
\[
u = \sum_{m \text{ even}} u_m,
\quad \text{with} \quad
u_m \in \W^{1,p}(\Omega^m(M)).
\]
Since $D_{p,+}u \ov{\eqref{Dp-decompo-en-deux}}{=} \d_p u + (\d^*)_p u$, the degree-$k$ component of $D_{p,+}u$ is exactly
\[
\d_p u_{k-1} + (\d^*)_p u_{k+1},
\]
with the convention that $u_{k+1}=0$ if $k=d$. Since $\eta \in \L^p(\Omega^k(M))$, we have $\eta = \d_p u_{k-1} + (\d^*)_p u_{k+1}$. So $\eta$ belongs to $
\d_p \W^{1,p}(\Omega^{k-1}(M))
+
(\d^*)_p \W^{1,p}(\Omega^{k+1}(M))$. Conversely, we prove the inclusion ``$\supset$''. Consider an element $\eta \in \L^p(\Omega^k(M))$ such that
\[
\eta
=
\d_p a + (\d^*)_p b
\]
with $a \in \W^{1,p}(\Omega^{k-1}(M))$ and $b \in \W^{1,p}(\Omega^{k+1}(M))$. Since $k$ is odd, the integers $k-1$ and $k+1$ are even. Define
\[
u \in \W^{1,p}(\Omega^{\even}(M))
\]
by taking $u_{k-1}=a$, $u_{k+1}=b$, and all the other even-degree components equal to $0$. Then $u \in \dom D_{p,+}$ and the degree-$k$ component of $D_{p,+}u$ is
\[
(D_{p,+}u)_k
=
\d_p a + (\d^*)_p b
=
\eta.
\]
Hence $\eta$ belongs to the space $\Ran D_{p,+} \cap \L^p(\Omega^k(M))$. This proves \eqref{eq:range-degree-k-precise}.

By the $\L^p$-Hodge decomposition \eqref{eq:Lp-Hodge-decomp}, we deduce that the subspace $\Ran D_{p,+} \cap \L^p(\Omega^k(M))$ of the Banach space $\L^p(\Omega^k(M))$ is closed and that 
\begin{equation}
\label{eq:quotient-degree-k}
\L^p(\Omega^k(M)) / (\Ran D_{p,+} \cap \L^p(\Omega^k(M)))
\cong
\cal{H}^k(M).
\end{equation}
Now, we have
\[
\Ran D_{p,+}
=\Ran D_{p,+} \cap \bigg(\bigoplus_{k \text{ odd}} \L^p(\Omega^k(M))\bigg)
= \bigoplus_{k \text{ odd}} \bigl(\Ran D_{p,+} \cap \L^p(\Omega^k(M))\bigr).
\]
Using \eqref{eq:quotient-degree-k} for each odd integer $k$, we obtain
\begin{align*}
\coker D_{p,+}
&=
\L^p(\Omega^{\odd}(M)) / \Ran D_{p,+}\\
&\cong
\bigoplus_{k \text{ odd}}
\L^p(\Omega^k(M)) / (\Ran D_{p,+} \cap \L^p(\Omega^k(M)))
\ov{\eqref{eq:quotient-degree-k}}{\cong}
\bigoplus_{k \text{ odd}} \cal{H}^k(M)
= \cal{H}^{\odd}(M).
\end{align*}
\end{proof}

%

Now, we obtain the first index theorem of this paper.

\begin{cor}
\label{cor-Euler operator}
Let $(M,g)$ be a smooth compact Riemannian manifold. Suppose that $1 < p < \infty$. The index of the Fredholm operator $D_{p,+} \co \W^{1,p}(\Omega^{\even}(M)) \to \L^p(\Omega^{\odd}(M))$ is 
\begin{equation}
\label{index-Hodge-Dirac-Lp}
\Index D_{p,+}
= \chi(M).
\end{equation}
\end{cor}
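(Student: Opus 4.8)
The plan is to combine the three propositions immediately preceding the corollary. By Theorem~\ref{thm:Euler-characteristic-Lp}, the operator $D_{p,+} \co \W^{1,p}(\Omega^{\even}(M)) \to \L^p(\Omega^{\odd}(M))$ is already known to be Fredholm, so it only remains to compute its index via the formula $\Index D_{p,+} = \dim \ker D_{p,+} - \dim \coker D_{p,+}$ (recall \eqref{Fredholm-Index}).

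First I would invoke Proposition~\ref{prop-ker-HD} to identify $\ker D_{p,+} = \cal{H}^{\even}(M) = \bigoplus_{k \text{ even}} \cal{H}^k(M)$, so that $\dim \ker D_{p,+} = \sum_{k \text{ even}} \dim \cal{H}^k(M)$. Next, Proposition~\ref{prop-coker-HD} gives $\coker D_{p,+} \cong \cal{H}^{\odd}(M) = \bigoplus_{k \text{ odd}} \cal{H}^k(M)$, hence $\dim \coker D_{p,+} = \sum_{k \text{ odd}} \dim \cal{H}^k(M)$. Subtracting, we obtain
\[
\Index D_{p,+}
= \sum_{k \text{ even}} \dim \cal{H}^k(M) - \sum_{k \text{ odd}} \dim \cal{H}^k(M)
= \sum_{k=0}^{d} (-1)^k \dim \cal{H}^k(M).
\]
Finally, using the Hodge-theoretic isomorphism $\cal{H}^k(M) \cong \H^k_{\dR}(M)$ recorded in \eqref{harmonic-space-and-de-Rham}, each $\dim \cal{H}^k(M)$ equals $\dim \H^k_{\dR}(M)$, so the alternating sum is exactly $\chi(M) = \sum_{k=0}^{\dim M} (-1)^k \dim \H^k_{\dR}(M)$ by definition of the Euler characteristic, yielding \eqref{index-Hodge-Dirac-Lp}.

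There is essentially no obstacle here: the corollary is a bookkeeping consequence of results already established. The only minor point to be careful about is the finite-dimensionality of each $\cal{H}^k(M)$ (needed so that the index is a well-defined integer), but this is guaranteed by \eqref{harmonic-space-and-de-Rham} together with the finite-dimensionality of de Rham cohomology on a compact manifold — and is in any case implied by the Fredholmness of $D_{p,+}$ from Theorem~\ref{thm:Euler-characteristic-Lp}. Thus the proof is a direct three-line computation assembling Propositions~\ref{prop-ker-HD} and~\ref{prop-coker-HD} with the definition of $\chi(M)$.
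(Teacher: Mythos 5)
Your proposal is correct and follows essentially the same route as the paper: invoke Theorem~\ref{thm:Euler-characteristic-Lp} for Fredholmness, then use Propositions~\ref{prop-ker-HD} and~\ref{prop-coker-HD} to identify kernel and cokernel with $\cal{H}^{\even}(M)$ and $\cal{H}^{\odd}(M)$, and conclude via \eqref{Fredholm-Index} and \eqref{harmonic-space-and-de-Rham}. Your additional remark on finite-dimensionality is a reasonable sanity check but, as you note, is already subsumed by the Fredholmness and by compactness of $M$.
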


\begin{proof}
According to Theorem \ref{thm:Euler-characteristic-Lp}, $D_{p,+} \co \W^{1,p}(\Omega^{\even}(M)) \to \L^p(\Omega^{\odd}(M))$ is a Fredholm operator. By Proposition \ref{prop-ker-HD} and Proposition \ref{prop-coker-HD}, we have
\begin{align*}
\MoveEqLeft
\Index D_{p,+}
\ov{\eqref{Fredholm-Index}}{=}\dim \ker D_{p,+}-\dim \coker D_{p,+}
= \dim \cal{H}^{\even}(M) - \dim \cal{H}^{\odd}(M) \\
&=\sum_{k=0}^{\dim M} (-1)^k \dim \cal{H}^{k}(M) 
\ov{\eqref{harmonic-space-and-de-Rham}}{=} \sum_{k=0}^{\dim M} (-1)^k \dim \H_{\dR}^k(M)
=\chi(M).
\end{align*}
\end{proof}

\section{Index of the signature operator}
\label{Section-Index-signature-operator}

In this section, we turn to the index problem associated with the signature operator. If $M$ is oriented and if the dimension of $M$ is even, the Hodge star operator gives rise to a second natural grading of $\L^p(\Omega^\bullet(M))$, and we show that it defines an even compact Banach spectral triple. We then identify the Fredholm index of the corresponding graded component of the $\L^p$-Hodge--Dirac operator and prove that, in dimension divisible by four, this index coincides with the signature of the manifold.

\begin{thm}
\label{thm:signature-Lp}
Let $(M,g)$ be a smooth compact oriented Riemannian manifold of even dimension $d=2\ell$. Suppose that $1 < p < \infty$.  Then the maps $\Omega^k(M) \to \Omega^k(M)$, $\omega \mapsto \i^{k(k-1)+\ell} * \omega$, where $k \in \{0,\ldots,d\}$, induce an isometric isomorphism $\tau \co \L^p(\Omega^\bullet(M)) \to \L^p(\Omega^\bullet(M))$ and the quadruple $(\C(M),\L^p(\Omega^\bullet(M)),D_p,\tau)$ is an even compact Banach spectral triple.
\end{thm}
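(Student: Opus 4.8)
The plan is to mimic the proof of Theorem~\ref{thm:even-Hodge-Dirac} that has just been given, replacing the grading $\gamma$ by the operator $\tau$ built from the Hodge star. The three things to check are: $\tau$ is a well-defined isometric isomorphism of $\L^p(\Omega^\bullet(M))$ with $\tau^2=\Id$; $\tau$ commutes with $\pi(f)=M_f$ for every $f\in\C(M)$; and $\tau$ anticommutes with $D_p$. Since $(\C(M),\L^p(\Omega^\bullet(M)),D_p)$ is already known to be a compact Banach spectral triple by Theorem~\ref{thm:locally-compact-Hodge-Dirac}, these three points are exactly what is needed to conclude that $(\C(M),\L^p(\Omega^\bullet(M)),D_p,\tau)$ is an even compact Banach spectral triple in the sense of Definition~\ref{Def-Banach-spectral-triple}.

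First I would handle the isometry and the involution property. On each fibre $\tau$ acts on $\Omega^k(M)$ as $\i^{k(k-1)+\ell}\ast$. The identity \eqref{Hodge-norm} says $|\omega(x)|_{\Lambda^k\mathrm{T}^*_xM}=|{\ast\omega}(x)|_{\Lambda^{d-k}\mathrm{T}^*_xM}$, and multiplication by the unimodular scalar $\i^{k(k-1)+\ell}$ does not change pointwise norms, so degree by degree $\tau$ is a fibrewise isometry; since $\ast$ permutes the degrees $k\leftrightarrow d-k$ bijectively and the fibre norm on $\Lambda^\bullet$ is the $\ell^2$-sum over degrees \eqref{norm-Lp-Df}, summing over $k$ shows $\|\tau\omega\|_{\L^p(\Omega^\bullet(M))}=\|\omega\|_{\L^p(\Omega^\bullet(M))}$ for $\omega\in\Omega^\bullet(M)$, hence $\tau$ extends to an isometric isomorphism of $\L^p(\Omega^\bullet(M))$ by density. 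For $\tau^2=\Id$: applying $\tau$ twice to a $k$-form gives $\i^{k(k-1)+\ell}\i^{(d-k)(d-k-1)+\ell}\ast^2=\i^{k(k-1)+\ell}\i^{(d-k)(d-k-1)+\ell}(-1)^{k(d-k)}\Id$ by \eqref{eq:star-square}, and with $d=2\ell$ a short exponent computation modulo $4$ (using $\i^4=1$ and reducing the power of $\i$ mod $4$ together with $(-1)^{k(d-k)}=\i^{2k(d-k)}$) shows the total coefficient is $1$; this is the one small arithmetic lemma the proof needs, and the sign conventions $\i^{k(k-1)+\ell}$ are chosen precisely so that it works out on an even-dimensional manifold.

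Next, commutation with $\pi(f)$ is immediate from \eqref{*-et-Mf}: for $f\in\C^\infty(M)$ and $\omega\in\Omega^k(M)$ we have $\tau(f\omega)=\i^{k(k-1)+\ell}\ast(f\omega)=\i^{k(k-1)+\ell}f\ast\omega=f\,\tau\omega$, so $\tau M_f=M_f\tau$ on $\Omega^\bullet(M)$ and hence on $\L^p(\Omega^\bullet(M))$ by density; for general continuous $f$ one approximates uniformly by smooth functions and uses that $M_f$ depends continuously on $f$ in operator norm. The anticommutation relation $\tau D=-D\tau$ on $\Omega^\bullet(M)$ is the substantive classical computation: it follows from the standard identities relating $\ast$ to $\d$ and $\d^*$, namely $\d^*=\pm\ast\d\ast$ as in \eqref{d-star-Omega-k}, which give $\ast\d=\pm\d^*\ast$ and $\ast\d^*=\pm\d\ast$ with signs that, after incorporating the normalising factors $\i^{k(k-1)+\ell}$, combine to produce the sign $-1$ uniformly across degrees on an even-dimensional manifold. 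I would carry this out degree by degree exactly as in the $\gamma$ case: write $D\omega_k=\alpha_{k+1}+\beta_{k-1}$ with $\alpha_{k+1}=\d\omega_k$, $\beta_{k-1}=\d^*\omega_k$, compute both $\tau D\omega_k$ and $D\tau\omega_k$ using the commutation of $\ast$ with $\d,\d^*$ together with \eqref{eq:star-square}, and verify the coefficients cancel. The main obstacle is precisely this bookkeeping of signs and powers of $\i$: one must track $(-1)^{k(d-k)}$, the parity shifts coming from $\d^*$ on $(d-k)$-forms versus $k$-forms, and the change $k\mapsto d-k$ in the normalising exponent, and check they conspire to give $-1$ and not $+1$; this is where the hypothesis that $d$ is even is used in an essential way (on odd-dimensional manifolds the signature operator is defined differently). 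Finally, since $\Omega^\bullet(M)$ is a core for $D_p$ and $\tau$ is bounded, the relations $\tau D_p=-D_p\tau$, $\tau\pi(f)=\pi(f)\tau$, $\tau^2=\Id$ extend from $\Omega^\bullet(M)$ to $\dom D_p$ and to all of $\L^p(\Omega^\bullet(M))$, which completes the verification that the quadruple is an even compact Banach spectral triple.
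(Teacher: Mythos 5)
Your proof follows essentially the same approach as the paper: verify $\tau^2=\Id$ and the isometry via the exponent computation and \eqref{Hodge-norm}, establish anticommutation with $D$ on $\Omega^\bullet(M)$ via \eqref{d-star-Omega-k} and \eqref{eq:star-square}, check commutation with $\pi(f)$ via \eqref{*-et-Mf}, and extend by density using that $\Omega^\bullet(M)$ is a core for $D_p$. The only minor organizational difference is that the paper first proves $\tau\d+\d^*\tau=0$ and then derives $\tau\d^*+\d\tau=0$ algebraically from it using $\tau^2=\Id$, whereas you propose a direct degree-by-degree computation of $\tau D+D\tau$; both routes are correct and of comparable length.
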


\begin{proof}
By Theorem~\ref{thm:locally-compact-Hodge-Dirac}, the triple $(\C(M),\L^p(\Omega^\bullet(M)),D_p)$ is a compact Banach spectral triple. We define an operator $\tau \co \Omega^\bullet(M) \to \Omega^\bullet(M)$ by setting, for each $k$-form $\omega \in \Omega^k(M)$,
\begin{equation}
\label{eq:def-tau}
\tau\omega \ov{\mathrm{def}}{=} \i^{k(k-1)+\ell} \ast \omega.
\end{equation}
We first compute $\tau^2$. Let $\omega \in \Omega^k(M)$. Then $\ast \omega$ has degree $d-k$, so applying \eqref{eq:def-tau} to $\ast \omega$ gives
\begin{equation}
\label{def-invol-4l}
\tau(\ast\omega)
=\i^{(d-k)(d-k-1)+\ell} \ast^2\omega.
\end{equation}
Using \eqref{eq:star-square}, we obtain
\begin{align*}
\MoveEqLeft
\tau^2\omega
=\tau(\tau\omega) 
\ov{\eqref{eq:def-tau}}{=}\i^{k(k-1)+\ell} \tau(\ast\omega) 
\ov{\eqref{def-invol-4l}}{=} \i^{k(k-1)+\ell}\, \i^{(d-k)(d-k-1)+\ell} \ast^2\omega.
\end{align*}
A straightforward computation gives
\begin{align*}
\MoveEqLeft
k(k-1) + \ell + (d-k)(d-k-1) + \ell \\
&= k(k-1) + (d-k)(d-k-1) + d 
= 2k^2 - 2dk + d^2.
\end{align*}
Thus by \eqref{eq:star-square} and since $\i^2=-1$ we have
\[
\tau^2\omega
=\i^{2k^2 - 2dk + d^2} (-1)^{k(d-k)} \omega
=\i^{2k^2 - 2dk + d^2 + 2k(d-k)} \omega
=\i^{d^2} \omega.
\]
Since $d=2\ell$, we have $d^2 = 4\ell^2$. Hence $
\i^{d^2} 
= \i^{4\ell^2} 
= (\i^4)^{\ell^2} = 1$ and consequently $\tau^2 = \Id_{\Omega^\bullet(M)}$. If $\omega \in \Omega^\bullet(M)$, we can write $\omega = \sum_{k=0}^d \omega_k$ with $\omega_k \in \Omega^k(M)$. Observe that
\begin{align*}
\norm{\tau \omega}_{\L^p(\Omega^\bullet(M))}
& \ov{\eqref{eq:def-tau}}{=} \bigg(\int_M \bigg(\sum_{k=0}^d |\i^{k(k-1)+\ell} \ast \omega_k(x)|_{\Lambda^{d-k} \mathrm{T}_x^*M}^2\bigg)^{\frac{p}{2}} \d\mu_g(x)\bigg)^{\frac1p}
\\
&\ov{\eqref{Hodge-norm}}{=}  \bigg(\int_M \bigg(\sum_{k=0}^d |\omega_k(x)|_{\Lambda^k \mathrm{T}_x^*M}^2\bigg)^{\frac{p}{2}} \d\mu_g(x)\bigg)^{\frac1p}
=\norm{\omega}_{\L^p(\Omega^\bullet(M))}.
\end{align*}
Since the subspace $\Omega^\bullet(M)$ is dense in the Banach space $\L^p(\Omega^\bullet(M))$, the linear operator $\tau \co \Omega^\bullet(M) \to \Omega^\bullet(M)$ clearly extends by continuity to an isometric isomorphism $\tau \co \L^p(\Omega^\bullet(M)) \to \L^p(\Omega^\bullet(M))$ satisfying $\tau^2=\Id_{\L^p(\Omega^\bullet(M))}$. We will prove that on the space $\Omega^\bullet(M)$ we have the anticommutation relations
\begin{equation}
\label{eq:tau-d-dstar}
\tau \d + \d^* \tau = 0
\quad \text{and} \quad
\tau \d^* + \d \tau = 0.
\end{equation}
We prove the first equality. Let $\omega \in \Omega^k(M)$ with $0 \leq k \leq d$. On the one hand, we have
\begin{equation}
\label{inter-AAZRR}
\tau(\d\omega)
\ov{\eqref{eq:def-tau}}{=} 
\i^{(k+1)k+\ell}\ast(\d\omega).
\end{equation}
On the other hand, $\tau\omega$ has degree $d-k$, so by the definition of the codifferential on $(d-k)$-forms we obtain
\begin{align*}
\d^*(\tau\omega)
&\ov{\eqref{d-star-Omega-k}}{=} (-1)^{d(d-k+1)+1}\ast\big(\d(\ast(\tau\omega))\big).
\end{align*}
Using again \eqref{eq:def-tau} and \eqref{eq:star-square}, we compute
\[
\ast(\tau\omega)
\ov{\eqref{eq:def-tau}}{=} 
\ast\bigl(\i^{k(k-1)+\ell}\ast\omega\bigr)
=\i^{k(k-1)+\ell}\ast^2\omega
\ov{\eqref{eq:star-square}}{=} \i^{k(k-1)+\ell}(-1)^{k(d-k)}\omega.
\]
Hence
\begin{align}
\label{inter-56}
\d^*(\tau\omega)
&=
(-1)^{d(d-k+1)+1}\ast\Big(\d\big(\i^{k(k-1)+\ell}(-1)^{k(d-k)}\omega\big)\Big)\\
&=
(-1)^{d(d-k+1)+1}\, \i^{k(k-1)+\ell}(-1)^{k(d-k)}\ast(\d\omega). \nonumber
\end{align}
Putting these formulas together, we obtain
\begin{align*}
(\tau\d + \d^*\tau)\omega
&\ov{\eqref{inter-AAZRR} \eqref{inter-56}}{=} 
\Bigl(\i^{(k+1)k+\ell}
+
(-1)^{d(d-k+1)+1+k(d-k)}\, \i^{k(k-1)+\ell}\Bigr)\ast(\d\omega)\\
&=
\i^{k(k-1)+\ell}\Bigl(\i^{2k}
+
(-1)^{d(d-k+1)+1+k(d-k)}\Bigr)\ast(\d\omega).
\end{align*}
Set
\[
A_k
\ov{\mathrm{def}}{=}
d(d-k+1)+1+k(d-k)
=
d^2+d+1-k^2.
\]
Since $d=2\ell$ is even, the parity of $A_k+k$ is the same as that of $1+k-k^2$. Modulo $2$ we have $k^2\equiv k$, so
\[
A_k+k\equiv 1+k-k^2\equiv 1 \pmod 2.
\]
Therefore $
(-1)^{A_k}
=(-1)^{A_k+k}(-1)^{-k}
=-(-1)^k$. Using $\i^{2k} = (-1)^k$, we deduce
\[
\i^{2k}
+
(-1)^{A_k}
=
(-1)^k-(-1)^k
=
0.
\]
It follows that $(\tau\d + \d^*\tau)\omega = 0$. Hence $\tau\d + \d^*\tau = 0$ on $\Omega^\bullet(M)$. From this identity, we see that $\tau\d = -\d^*\tau$. Multiplying on the left by $\tau$ and using $\tau^2=\Id$ from the first part of the proof, we obtain $\d = -\tau\d^*\tau$. Multiplying this identity on the right by $\tau$ and using again $\tau^2=\Id$ yields $
\d\tau = -\tau\d^*$ which is exactly the second relation in \eqref{eq:tau-d-dstar}.

Adding the two identities in \eqref{eq:tau-d-dstar}, we obtain $\tau D + D \tau = 0$ on the space $\Omega^\bullet(M)$. Since $\tau$ is bounded on $\L^p(\Omega^\bullet(M))$, the anticommutation relation extends from $\Omega^\bullet(M)$ to the domain of the closed operator $D_p$, and we obtain $\tau D_p + D_p\tau = 0$ on the subspace $\dom D_p$.

Let $f \in \C(M)$ and let $\omega \in \Omega^\bullet(M)$. Fix $k \in \{0,\dots,d\}$ and $\omega \in \Omega^k(M)$. We have
\[
\tau(\pi(f)\omega)
\ov{\eqref{def-de-pi}}{=} \tau(f\omega)
\ov{\eqref{eq:def-tau}}{=} \i^{k(k-1)+\ell}\ast(f\omega)
\ov{\eqref{*-et-Mf}}{=} \i^{k(k-1)+\ell}f \ast \omega
\ov{\eqref{eq:def-tau}}{=} f\tau\omega
\ov{\eqref{def-de-pi}}{=} \pi(f)(\tau\omega).
\]
By linearity, this proves $\tau\pi(f) = \pi(f)\tau$ on $\Omega^\bullet(M)$. Since the subspace $\Omega^\bullet(M)$ is dense in $\L^p(\Omega^\bullet(M))$ and since $\tau$ extends to a bounded operator on $\L^p(\Omega^\bullet(M))$, it follows by continuity that $\tau\pi(f) = \pi(f)\tau$ holds on $\L^p(\Omega^\bullet(M))$ for all $f \in \C(M)$.
\end{proof}

The involution $\tau$ yields a $\Z_2$-grading of $\L^p(\Omega^\bullet(M))$ by its $\pm 1$ eigenspaces:
\begin{equation}
\label{decompo-OBBG}
\L^p(\Omega^\bullet(M))
= \L^p(\Omega^+(M)) \oplus \L^p(\Omega^-(M)),
\end{equation}
where
\[
\L^p(\Omega^+(M))
\ov{\mathrm{def}}{=} \{\omega \in \L^p(\Omega^\bullet(M)) \co \tau\omega =  \omega\}
\quad \text{and} \quad
\L^p(\Omega^-(M))
\ov{\mathrm{def}}{=} \{\omega \in \L^p(\Omega^\bullet(M)) \co \tau\omega = - \omega\}.
\]
Since $\tau D_p =- D_p\tau$, with respect to this decomposition, the operator $D_p$ admits by Proposition \ref{prop-anticommuting-symmetry-unbounded} the block matrix form 
$$
D_p = \begin{bmatrix}
  0   & \cal{D}_{p,-} \\
 \cal{D}_{p,+}  &  0 \\
\end{bmatrix},
$$ 
where the operators $
\cal{D}_{p,+} \co \dom \cal{D}_{p,+} \subset \L^p(\Omega^{+}(M)) \to \L^p(\Omega^{-}(M))$ and $
\cal{D}_{p,-} \co \dom \cal{D}_{p,-} \subset \L^p(\Omega^{-}(M)) \to \L^p(\Omega^{+}(M))$ are the restrictions of the Hodge--Dirac operator to subspaces. The operator $\cal{D}_{2,+}$ is sometimes called the signature operator. 
Recall that we denote by $P_p \co \L^p(\Omega^\bullet(M)) \to \L^p(\Omega^\bullet(M))$ the harmonic projection onto the subspace $\cal{H}(M)$ of harmonic forms.

\begin{lemma}
\label{lem:tau-commutes-S}
Let $(M,g)$ be a smooth compact oriented Riemannian manifold of even dimension $d=2\ell$. Suppose that $1 < p < \infty$.  Then the operator $\scr{D}\ov{\mathrm{def}}{=}
|D_p| + P_p$ commutes with $\tau$, i.e., $\tau \scr{D} = \scr{D} \tau$. In particular, $\scr{D}$ is even with respect to the $\Z_2$-grading defined by $\tau$.
\end{lemma}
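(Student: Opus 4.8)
The plan is to handle the two summands of $\scr{D} = |D_p| + P_p$ separately, relying only on the anticommutation relation $\tau D_p + D_p\tau = 0$ established in Theorem~\ref{thm:signature-Lp} and on the fact that $\tau$ is a bounded isomorphism of $\L^p(\Omega^\bullet(M))$ with $\tau^2 = \Id$.

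First I would note that the relation $\tau D_p = -D_p\tau$ on $\dom D_p$ implies, upon iterating, that $\tau D_p^2 = D_p^2\tau$; since $\tau$ is bounded, invertible, and maps $\dom D_p$ bijectively onto itself (hence also $\dom D_p^2$ onto itself), this becomes the operator identity $\tau\,\Delta_{\HdR,p}\,\tau^{-1} = \Delta_{\HdR,p}$, where $\Delta_{\HdR,p} = D_p^2$. Consequently $\tau$ commutes with every resolvent $R(\lambda,\Delta_{\HdR,p})$ for $\lambda \in \rho(\Delta_{\HdR,p})$, and therefore with any operator produced by the sectorial functional calculus of $\Delta_{\HdR,p}$; in particular $\tau|D_p| = |D_p|\tau$, since $|D_p| = \Delta_{\HdR,p}^{1/2}$. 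Equivalently, $\tau|D_p|\tau^{-1}$ is a sectorial operator whose square equals $\tau D_p^2\tau^{-1} = D_p^2$, so it coincides with $|D_p|$ by uniqueness of the sectorial square root.

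Next I would show that $\tau$ commutes with the harmonic projection $P_p$. By the decomposition \eqref{decompo-space-bisect} we have $\L^p(\Omega^\bullet(M)) = \ovl{\Ran D_p}\oplus\ker D_p$, and $P_p$ is the bounded projection onto $\ker D_p = \cal{H}(M)$ with kernel $\ovl{\Ran D_p}$. The relation $\tau D_p = -D_p\tau$ shows that $\tau$ maps $\ker D_p$ into itself (if $D_p\omega = 0$ then $D_p\tau\omega = -\tau D_p\omega = 0$) and maps $\Ran D_p$ into itself (for $\omega \in \dom D_p$ one has $\tau D_p\omega = -D_p\tau\omega \in \Ran D_p$); since $\tau$ is invertible, it preserves each summand of this direct sum, and hence $\ovl{\Ran D_p}$ by continuity. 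A bounded isomorphism preserving both summands of a topological direct sum commutes with the corresponding projection, so $\tau P_p = P_p\tau$.

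Combining these two points gives $\tau\scr{D} = \tau(|D_p| + P_p) = (|D_p| + P_p)\tau = \scr{D}\tau$, which also shows that $\scr{D}$ is even for the $\Z_2$-grading defined by $\tau$. The only delicate step is passing from the commutation of $\tau$ with $\Delta_{\HdR,p}$ to the commutation with $|D_p| = \Delta_{\HdR,p}^{1/2}$; I expect this to be the main (and rather mild) obstacle, but it is dispatched either by the uniqueness of the sectorial square root or by representing $|D_p|$ through a Cauchy integral of resolvents, each of which already commutes with $\tau$.
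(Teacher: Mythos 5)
Your proof is correct, and it is organized a little differently from the paper's argument. For the commutation $\tau|D_p|=|D_p|\tau$, the paper goes back to the refined anticommutation relations $\tau\d+\d^*\tau=0$ and $\tau\d^*+\d\tau=0$ (established in the proof of Theorem~\ref{thm:signature-Lp}) to compute $\tau\Delta_{\HdR}=\Delta_{\HdR}\tau$ directly on smooth forms, then passes to the closure by the core property; you instead use only the coarser relation $\tau D_p=-D_p\tau$ on $\dom D_p$ and iterate it, which is slightly slicker and makes it transparent that $\tau$ conjugates $\Delta_{\HdR,p}$ to itself as an unbounded operator. Both then conclude by functional calculus (uniqueness of the sectorial square root, or conjugation-invariance of resolvents, as you note). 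For the commutation with the harmonic projection $P_p$, the paper shows $\tau$ preserves both pieces of the concrete $\L^p$-Hodge splitting $\cal{H}(M)\oplus(\d\W^{1,p}\oplus\d^*\W^{1,p})$, whereas you use the abstract bisectorial splitting $\L^p(\Omega^\bullet(M))=\ovl{\Ran D_p}\oplus\ker D_p$ of \eqref{decompo-space-bisect}; this does require the (true, but not spelled out in the statement) identification that the harmonic projection $P_p$ is exactly the projection onto $\ker D_p$ along $\ovl{\Ran D_p}$. Since $\Ran D_p\subset\d\W^{1,p}\oplus\d^*\W^{1,p}=\ker P_p$ and both are closed complements of the finite-dimensional $\cal{H}(M)$, they coincide, so your identification is justified, but it would be worth making this one-line remark explicit. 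Otherwise the two arguments buy essentially the same thing; yours relies on less information about $\tau$ (only its anticommutation with $D_p$, not with $\d$ and $\d^*$ separately), at the cost of the small identification just mentioned.
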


\begin{proof}
On $\Omega^\bullet(M)$, we compute
\begin{align*}
\MoveEqLeft
\tau \Delta_{\HdR}
\ov{\eqref{Hodge-deRham-Laplacian}}{=} (\tau \d)\d^* + (\tau \d^*)\d
\ov{\eqref{eq:tau-d-dstar}}{=} (-\d^*\tau)\d^* + (-\d\tau)\d
=-\d^*(\tau\d^*) - \d(\tau\d) \\
&\ov{\eqref{eq:tau-d-dstar}}{=} \d^*\d\tau + \d\d^*\tau
\ov{\eqref{Hodge-deRham-Laplacian}}{=} \Delta_{\HdR} \tau.
\end{align*}
Since the subspace $\Omega^\bullet(M)$ is a core of $\Delta_{\HdR,p}$, we obtain $
\tau \Delta_{\HdR,p} = \Delta_{\HdR,p}\tau$. Hence $\tau$ commutes with $D_p^2= \Delta_{\HdR,p}$. We deduce that $\tau$ commutes with $|D_p| = (D_p^2)^{\frac12}$, i.e., $\tau |D_p| = |D_p|\tau$.

Since $\tau$ commutes with the operator $\Delta_{\HdR,p}$, it preserves the subspace $\ker \Delta_{\HdR,p} = \cal{H}(M)$. By \eqref{eq:tau-d-dstar}, it also preserves the complementary subspace $\d \W^{1,p}(\Omega^\bullet(M)) \oplus \d^* \W^{1,p}(\Omega^\bullet(M))$. Consequently, by \cite[Theorem 2.22 p.~81]{AbA02}, $\tau$ commutes with the projection $P_p$ on $\cal{H}(M)$, i.e., we have $\tau P_p = P_p \tau$.  Combining the two commutation relations, we obtain the equality $\tau \scr{D}=\scr{D}\tau$.  
This shows that $\scr{D}$ is even with respect to the grading defined by $\tau$.
\end{proof}

Using the same method as in the proof of Theorem \ref{thm:Euler-characteristic-Lp}, we obtain the following result.

\begin{thm}
\label{thm:signature-characteristic-Lp}
Let $(M,g)$ be a smooth compact oriented Riemannian manifold of even dimension. Suppose that $1 < p < \infty$. Let $[1] \in \K_0(\C(M))$ be the $\K$-theory class of the constant function $1$. Then the linear operator $\cal{D}_{p,+} \co \W^{1,p}(\Omega^+(M)) \to \L^p(\Omega^-(M))$ is Fredholm and we have
\[
\big\la [1],[\L^p(\Omega^\bullet(M)),\pi,\sgn D_p,\tau] \big\ra
= \Index \cal{D}_{p,+}.
\]
\end{thm}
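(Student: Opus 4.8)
The plan is to follow \emph{verbatim} the proof of Theorem~\ref{thm:Euler-characteristic-Lp}, with the grading $\gamma$ replaced by the signature grading $\tau$ of Theorem~\ref{thm:signature-Lp} and the even/odd splitting replaced by the $\pm$ splitting $\L^p(\Omega^\bullet(M)) = \L^p(\Omega^+(M)) \oplus \L^p(\Omega^-(M))$. By Theorem~\ref{thm:signature-Lp}, the quadruple $(\C(M),\L^p(\Omega^\bullet(M)),D_p,\tau)$ is an even compact Banach spectral triple, so Proposition~\ref{prop-triple-to-Fredholm} yields an even Banach Fredholm module $(\L^p(\Omega^\bullet(M)),\pi,\sgn D_p,\tau)$ over $\C(M)$, with $\pi$ the representation by pointwise multiplication. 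Writing this module in block form with respect to the $\tau$-grading and applying Theorem~\ref{th-pairing-even} with $\cal{A}=\C(M)$, $X_\pm \ov{\mathrm{def}}{=} \L^p(\Omega^\pm(M))$, $F\ov{\mathrm{def}}{=}\sgn D_p$ and the trivial projection $e=1 \in \M_1(\C(M))$ (so that $e_1 = (\Id\ot\pi)(1) = \Id_{\L^p(\Omega^\bullet(M))}$), I obtain that $(\sgn D_p)_+ \co \L^p(\Omega^+(M)) \to \L^p(\Omega^-(M))$ is Fredholm and that the pairing in the statement equals $\Index (\sgn D_p)_+$.

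Next I would factor $D_p$ through $\scr{D} = |D_p| + P_p$ exactly as in the Euler case. From $\sgn D_p = D_p|D_p|^{-1}$ (where $\sgn D_p$ and $|D_p|^{-1}$ vanish on $\ker D_p$ and $|D_p| = (D_p^2)^{\frac12}$ is defined by the sectorial functional calculus of $\Delta_{\HdR,p} = D_p^2$) together with $\sgn(D_p)P_p = 0$, one has $D_p = \sgn(D_p)|D_p| = \sgn(D_p)\scr{D}$. By Lemma~\ref{lem:tau-commutes-S}, $\scr{D}$ commutes with $\tau$, hence is even for the $\tau$-grading; moreover $\tau$, being the composition of multiplication by degree-dependent phases with the Hodge star $*$ --- a parallel fibrewise isometry --- acts boundedly on $\W^{1,p}(\Omega^\bullet(M))$ with $\tau^2 = \Id$, so $\W^{1,p}(\Omega^\bullet(M)) = \W^{1,p}(\Omega^+(M)) \oplus \W^{1,p}(\Omega^-(M))$ and the operators $\scr{D}$, $|D_p|$, $D_p$, $\sgn D_p$ all decompose into $2\times 2$ blocks compatibly with this common splitting. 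By Proposition~\ref{prop-isomorphism}, $\scr{D} \co \W^{1,p}(\Omega^\bullet(M)) \to \L^p(\Omega^\bullet(M))$ is a Banach-space isomorphism, so its restriction $\scr{D}_+ \co \W^{1,p}(\Omega^+(M)) \xrightarrow{\ \cong\ } \L^p(\Omega^+(M))$ is an isomorphism, in particular Fredholm of index $0$.

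Finally, reading $D_p = \sgn(D_p)\scr{D}$ on the $+$ component gives $\cal{D}_{p,+} = (\sgn D_p)_+ \scr{D}_+ \co \W^{1,p}(\Omega^+(M)) \to \L^p(\Omega^-(M))$. Since $\scr{D}_+$ is an isomorphism and $(\sgn D_p)_+$ is Fredholm, the composition is Fredholm, and by \eqref{composition-Index}
\[
\Index \cal{D}_{p,+} = \Index (\sgn D_p)_+ + \Index \scr{D}_+ = \Index (\sgn D_p)_+,
\]
which combined with the first step gives the announced equality. The only point not already settled in the Euler case is the compatibility of the $\W^{1,p}$-splitting with the $\tau$-grading, i.e.\ that $\tau$ restricts to a bounded involution of $\W^{1,p}(\Omega^\bullet(M))$ and that $\scr{D}_+$ lands in $\L^p(\Omega^+(M))$ rather than merely in $\L^p(\Omega^\bullet(M))$; this is the main (modest) obstacle, and it is handled by the boundedness of $*$ on $\W^{1,p}$ together with Lemma~\ref{lem:tau-commutes-S}. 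Everything else is a transcription of the proof of Theorem~\ref{thm:Euler-characteristic-Lp}.
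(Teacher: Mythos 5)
Your proposal is correct and coincides with the paper's intended argument: the paper does not actually write out a proof of this theorem but merely states that it follows by the same method as Theorem~\ref{thm:Euler-characteristic-Lp}, and you have carried out exactly that transcription, using Theorem~\ref{thm:signature-Lp} and Lemma~\ref{lem:tau-commutes-S} in place of the parity grading. You also rightly flag and resolve the one genuinely new point that the paper leaves implicit, namely that $\tau$ (being multiplication by degree-dependent constants composed with the Hodge star, a smooth bundle endomorphism) restricts to a bounded involution of $\W^{1,p}(\Omega^\bullet(M))$, so that $\scr{D}_+$ lands in $\L^p(\Omega^+(M))$ and the factorization $\cal{D}_{p,+} = (\sgn D_p)_+\scr{D}_+$ makes sense on the $\tau$-graded pieces.
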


\begin{proof}
The proof is identical to that of Theorem \ref{thm:Euler-characteristic-Lp}, replacing the grading operator $\gamma$ by $\tau$ and using Lemma \ref{lem:tau-commutes-S} instead of the fact that $\scr D$ is even with respect to the parity grading.
\end{proof}

Now, assume that the dimension $d$ of the Riemannian oriented compact manifold $M$ is divisible by four, i.e., $d=4j$ and $\ell=2j$ for some integer $j \geq 1$. Observe that in this case, we have $\tau\omega=*\omega$ for any differential form $\omega$ belonging to the space $\Omega^{2j}(M)$ of middle degree forms since
\begin{equation*}
\tau\omega
\ov{\eqref{eq:def-tau}}{=}
\i^{k(k-1)+\ell}\ast\omega
=\i^{2j(2j-1)+2j}\ast\omega
=\i^{4j^2}\ast\omega
=\ast\omega.
\end{equation*}
Consequently, $\ast^2\big|_{\Omega^{2j}(M)} \co \Omega^{2j}(M) \to \Omega^{2j}(M)$ is an involution on the space $\Omega^{2j}(M)$ of $2 j$-forms, i.e.,  
\begin{equation}
\label{ast=Id}
\ast^2\big|_{\Omega^{2j}(M)}=\Id_{\Omega^{2 j}(M)}.
\end{equation}
Since $*\Delta_{\HdR}=\Delta_{\HdR}*$ (see \cite[(4) p.~221]{War83}), for any integer $k \in \{0,\ldots,d\}$, the Hodge star operator $*$ induces an involutive isomorphism 
\begin{equation}
\label{tau-invol-bis}
\tau \co \cal{H}^{k}(M) \to \cal{H}^{d-k}(M).
\end{equation}
So, we can write
$$
\cal{H}^k(M)
=\cal{H}^k_+(M) \oplus \cal{H}^k_-(M),
$$
where
$$
\cal{H}^k_+(M)
\ov{\mathrm{def}}{=} \frac{\Id+\tau}{2}(\cal{H}^k(M))
\quad \text{and} \quad 
\cal{H}^k_-(M)
\ov{\mathrm{def}}{=} \frac{\Id-\tau}{2}(\cal{H}^k(M)).
$$
By \cite[p.~36]{Dieu82} the intersection form 
\begin{equation}
\label{intersection-form}
Q_M \co \H^{2j}_{\dR}(M) \times \H^{2j}_{\dR}(M) \to \R, \quad ([\alpha],[\beta]) \mapsto \int_M \alpha \wedge \beta 
\end{equation}
on the middle-degree de Rham cohomology is a non-degenerated symmetric bilinear form. It is worth noting that $Q_M$ can be expressed in terms of the $\L^2$ inner product. Indeed, if $\alpha,\beta$ belong to $\Omega^{2j}(M) $, then we have 
$$
Q_M([\alpha],[\beta])
=\int_M \alpha \wedge \beta
\ov{\eqref{eq:def-Hodge-star}\eqref{ast=Id}}{=} \int_M \langle \alpha, *\beta\rangle \vol_g
= \la \alpha,*\beta \ra_{\L^2(\Omega^{2j}(M))}.
$$ 
Its signature (the difference of the number of positive and negative numbers in the diagonal of the matrix of $B$ with respect to an orthogonal basis) is called the signature of $M$ and denoted by $\sign(M)$. It is known \cite[p.~137]{LaM89} that
\begin{equation}
\label{signature}
\sign(M)
= \dim \cal{H}^{2j}_+(M) - \dim \cal{H}^{2j}_-(M).
\end{equation}
For any integer $k \in \{0,\ldots,2j-1\}$, we consider the finite-dimensional subspace $
E^k 
\ov{\mathrm{def}}{=} \cal{H}^k(M) \oplus \cal{H}^{d-k}(M)$ 
of the space $\ker D$. By \eqref{tau-invol-bis}, the subspace $E^k$ is invariant under $\tau$. So, we have a decomposition
\begin{equation}
\label{def-Ek}
E^k 
=E^k_+ \oplus E^k_-. 
\end{equation}
For any integer $k \in \{0,\ldots,2j-1\}$, it is known \cite[p.~137]{LaM89} that
\begin{equation}
\label{dim-Ek+}
\dim E^k_+
=\dim E^k_-
=\dim \cal{H}^k(M).
\end{equation}
Now, we have 
\begin{align}
\MoveEqLeft
\label{Ker-D+}
\ker \cal{D}_{p,+}
=   \L^p(\Omega^+(M)) \cap \ker D_p
=   \L^p(\Omega^+(M)) \cap \ker \Delta_{\HdR,p} \\
&= \cal{H}^0_+(M) \oplus \cdots \oplus \cal{H}^{4j-1}_+(M) \oplus \cal{H}^{4j}_+(M) \nonumber \\
&=(\cal{H}^0_+(M) \oplus \cal{H}^{4j}_+(M)) \oplus (\cal{H}^1_+(M) \oplus \cal{H}^{4 j-1}_+(M)) \oplus \cdots \nonumber\\
&\cdots\oplus (\cal{H}^{2j-1}_+(M) \oplus \cal{H}^{2 j+1}_+(M)) \oplus \cal{H}^{2 j}_+(M) 
\ov{\eqref{def-Ek}}{=} E_+^0 \oplus E_+^1 \oplus \cdots \oplus E_+^{2j-1} \oplus \cal{H}^{2 j}_+(M). \nonumber
\end{align}
Similarly, we have
\begin{equation}
\label{ker-D-}
\ker \cal{D}_{p,-}
=E_-^0 \oplus E_-^1 \oplus \cdots \oplus E_-^{2j-1} \oplus \cal{H}^{2 j}_-(M).
\end{equation}

\begin{prop}
\label{prop-coker-signature}
Let $(M,g)$ be a smooth compact oriented Riemannian manifold of dimension divisible by four, say $\dim M=4j$. Suppose that $1 < p < \infty$. Then we have an isomorphism
\begin{equation}
\label{eq:coker-signature}
\coker \cal{D}_{p,+}
\cong
E_-^0 \oplus E_-^1 \oplus \cdots \oplus E_-^{2j-1} \oplus \cal{H}^{2j}_-(M).
\end{equation}
\end{prop}

\begin{proof}
By Theorem \ref{thm:signature-characteristic-Lp}, the operator $
\cal{D}_{p,+} \co \dom \cal{D}_{p,+} \subset \L^p(\Omega^+(M)) \to \L^p(\Omega^-(M))$ is Fredholm. By Lemma \ref{lem:tau-commutes-S}, the operator $
\scr{D}
\ov{\mathrm{def}}{=}
|D_p|+P_p$ commutes with $\tau$, hence it is even with respect to the decomposition \eqref{decompo-OBBG}. Therefore, by Proposition \ref{prop-anticommuting-symmetry-unbounded}, it has a block diagonal form
\[
\scr{D}
=
\begin{bmatrix}
\scr{D}_+ & 0\\
0 & \scr{D}_-
\end{bmatrix}.
\]
Moreover, by Proposition \ref{prop-isomorphism}, the operator $
\scr{D} \co \W^{1,p}(\Omega^\bullet(M)) \xrightarrow{\ \cong\ } \L^p(\Omega^\bullet(M))$ is an isomorphism of Banach spaces, so in particular $
\scr{D}_+ \co \W^{1,p}(\Omega^+(M)) \xrightarrow{\ \cong\ } \L^p(\Omega^+(M))$ is an isomorphism.

By Theorem \ref{thm:signature-Lp}, we have $\tau D_{p}=-D_{p}\tau$. Since the sign function is odd, we obtain by \cite[Proposition 3.2.10]{Ege15} the equality that the operator $\sgn D_p$ anticommutes with $\tau$. So it has by Proposition \ref{prop-anticommuting-symmetry-unbounded} an off-diagonal form
\[
\sgn D_p
=\begin{bmatrix}
0 & (\sgn D_p)_-\\
(\sgn D_p)_+ & 0
\end{bmatrix}.
\]
As in the proof of Theorem \ref{thm:Euler-characteristic-Lp}, we have
\[
D_p
=
\sgn(D_p)\scr{D}.
\]
Passing to the block decomposition associated with the decomposition $\L^p(\Omega^+(M)) \oplus \L^p(\Omega^-(M))$, we obtain
\begin{equation}
\label{eq:factorization-calDp-plus}
\cal{D}_{p,+}
=
(\sgn D_p)_+ \scr{D}_+.
\end{equation}
Since $\scr{D}_+$ is an isomorphism, it follows that
\begin{equation}
\label{eq:coker-reduction-sign}
\coker \cal{D}_{p,+}
\cong
\coker (\sgn D_p)_+.
\end{equation}
Now, by \eqref{decomposition-de-X}, applied to the bisectorial operator $D_p$, we have
\[
\L^p(\Omega^\bullet(M))
=
\ker D_p \oplus \ovl{\Ran D_p}.
\]
Since $\tau D_p = -D_p\tau$, the operator $\tau$ preserves both $\ker D_p$ and $\ovl{\Ran D_p}$. Hence
\begin{equation}
\label{eq:decomp-signature-plus}
\L^p(\Omega^+(M))
=
(\ker D_p \cap \L^p(\Omega^+(M))) \oplus (\ovl{\Ran D_p} \cap \L^p(\Omega^+(M)))
\end{equation}
and
\begin{equation}
\label{eq:decomp-signature-minus}
\L^p(\Omega^-(M))
=
(\ker D_p \cap \L^p(\Omega^-(M))) \oplus (\ovl{\Ran D_p} \cap \L^p(\Omega^-(M))).
\end{equation}

By construction, the operator $\sgn D_p$ vanishes on the subspace $\ker D_p$. Moreover, on the subspace $\ovl{\Ran D_p}$, the operator $\sgn D_p$ is defined from the injective part of $D_p$, so it is an involution there. Since it anticommutes with $\tau$, its off-diagonal part
\[
(\sgn D_p)_+ \co \ovl{\Ran D_p} \cap \L^p(\Omega^+(M)) \to \ovl{\Ran D_p} \cap \L^p(\Omega^-(M))
\]
is an isomorphism with inverse the restriction of $(\sgn D_p)_-$.

Therefore, with respect to the decompositions \eqref{eq:decomp-signature-plus} and \eqref{eq:decomp-signature-minus}, the operator $(\sgn D_p)_+$ has the form
\[
(\sgn D_p)_+
=
\begin{bmatrix}
0 & 0\\
0 & U
\end{bmatrix},
\]
where $
U \co \ovl{\Ran D_p} \cap \L^p(\Omega^+(M)) \xrightarrow{\ \cong\ } \ovl{\Ran D_p} \cap \L^p(\Omega^-(M))$ is an isomorphism. It follows that
\begin{equation}
\label{inter-245}
\Ran (\sgn D_p)_+
=
\ovl{\Ran D_p} \cap \L^p(\Omega^-(M)).
\end{equation}
Consequently, we have
\begin{align*}
\MoveEqLeft
\coker \cal{D}_{p,+}
\ov{\eqref{eq:coker-reduction-sign}}{\cong} \coker (\sgn D_p)_+
=\L^p(\Omega^-(M)) / \Ran (\sgn D_p)_+ \\
&\ov{\eqref{inter-245}}{\cong} 
\L^p(\Omega^-(M)) / (\ovl{\Ran D_p} \cap \L^p(\Omega^-(M)))
\ov{\eqref{eq:decomp-signature-minus}}{\cong} 
\ker D_p \cap \L^p(\Omega^-(M)).         
\end{align*}
Finally, we obtain that
\[
\ker D_p \cap \L^p(\Omega^-(M))
=
\ker \cal{D}_{p,-}
\ov{\eqref{ker-D-}}{=} 
E_-^0 \oplus E_-^1 \oplus \cdots \oplus E_-^{2j-1} \oplus \cal{H}^{2j}_-(M).
\]
This proves \eqref{eq:coker-signature}.
\end{proof}

Now, we can state the second index theorem of this paper.

\begin{thm}
\label{thm-signature}
Let $(M,g)$ be a smooth compact oriented Riemannian manifold of dimension divisible by four. Suppose that $1 < p < \infty$.  The index of the Fredholm operator $\cal{D}_{p,+}$ is 
\begin{equation}
\label{}
\Index \cal{D}_{p,+}
=\dim \cal{H}^{2j}_+(M) - \dim \cal{H}^{2j}_-(M)
=\sign(M).
\end{equation}
\end{thm}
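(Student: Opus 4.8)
The strategy is to reduce the index computation to a kernel/cokernel count, exactly parallel to the treatment of the Euler operator. First I would invoke Theorem~\ref{thm:signature-characteristic-Lp} (together with Theorem~\ref{thm:signature-Lp}) to know that $\cal{D}_{p,+} \co \W^{1,p}(\Omega^+(M)) \to \L^p(\Omega^-(M))$ is indeed Fredholm, so that $\Index \cal{D}_{p,+} = \dim\ker\cal{D}_{p,+} - \dim\coker\cal{D}_{p,+}$ makes sense. Then I would compute $\ker\cal{D}_{p,+}$: as in Proposition~\ref{prop-ker-HD}, an element of $\ker\cal{D}_{p,+}$ lies in $\dom D_p$ and is annihilated by $D_p$, hence by $\Delta_{\HdR,p}=D_p^2$, so by the $\L^p$-Gaffney regularity it is a smooth harmonic form; combined with the constraint $\tau\omega=\omega$ this gives $\ker\cal{D}_{p,+}=\cal{H}^+(M)$, the $+1$-eigenspace of $\tau$ on harmonic forms. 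This is precisely the content already recorded in \eqref{Ker-D+}.

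Next I would identify the cokernel. Here I would follow Proposition~\ref{prop-coker-HD}: since $\ast$ commutes with $\Delta_{\HdR}$ (Lemma~\ref{lem:tau-commutes-S} gives $\tau\Delta_{\HdR,p}=\Delta_{\HdR,p}\tau$), the harmonic projection $P_p$ is $\tau$-equivariant, and the $\L^p$-Hodge decomposition \eqref{eq:Lp-Hodge-decomp} splits $\tau$-equivariantly into exact part, coexact part, and harmonic part. On the exact$\oplus$coexact part $\cal{D}_{p,+}$ is surjective onto the corresponding subspace of $\L^p(\Omega^-(M))$ by the same degree-by-degree argument as in \eqref{eq:range-degree-k}; what remains is the harmonic part, on which $D_p$ vanishes. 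Hence $\coker\cal{D}_{p,+}\cong\cal{H}^-(M)$, the $-1$-eigenspace of $\tau$ on harmonic forms — which matches \eqref{ker-D-} read as a cokernel. Alternatively, and perhaps more cleanly, I would use the self-adjoint-type symmetry: $\cal{D}_{p,-}$ is the ``transpose'' of $\cal{D}_{p,+}$ across the $\tau$-grading, and $\coker\cal{D}_{p,+}$ is isomorphic to $\ker\cal{D}_{p,-}=\cal{H}^-(M)$ via \eqref{index-T-with-adjoint} and \eqref{ker-D-}.

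Finally I would assemble the pieces: using $d=4j$, the middle-degree Hodge star on $\Omega^{2j}(M)$ squares to the identity by \eqref{ast=Id}, the involution $\tau=\ast$ pairs $\cal{H}^k(M)$ with $\cal{H}^{d-k}(M)$ isomorphically by \eqref{tau-invol-bis}, and by \eqref{dim-Ek+} the $\pm$-eigenspaces $E^k_\pm$ have equal dimension for $k<2j$. Therefore in the computation
\[
\Index\cal{D}_{p,+}
=\dim\cal{H}^+(M)-\dim\cal{H}^-(M)
=\sum_{k=0}^{2j-1}\bigl(\dim E^k_+-\dim E^k_-\bigr)+\dim\cal{H}^{2j}_+(M)-\dim\cal{H}^{2j}_-(M),
\]
the off-middle terms cancel in pairs, leaving $\dim\cal{H}^{2j}_+(M)-\dim\cal{H}^{2j}_-(M)$, which equals $\sign(M)$ by the identification of the intersection form $Q_M$ with the $\L^2$-pairing $\langle\alpha,\ast\beta\rangle$ on $\cal{H}^{2j}(M)$ and the classical fact recalled from \cite[p.~137]{LaM89}. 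The main obstacle is the cokernel identification: one must check carefully that the $\L^p$-Hodge decomposition is genuinely $\tau$-equivariant and that the range of $\cal{D}_{p,+}$ really is the full exact-plus-coexact part of $\L^p(\Omega^-(M))$ — i.e.\ that no regularity is lost when one restricts $D_p$ to the $\tau$-eigenspaces — but this is handled by the same elliptic-regularity and Hodge-theoretic inputs already used for the Euler operator, since $\tau$ is an isometric isomorphism of $\L^p(\Omega^\bullet(M))$ commuting with $\pi$ and with $\scr{D}$.
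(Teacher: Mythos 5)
Your proposal is correct and follows essentially the same path as the paper, including the key cancellation of the off-middle eigenspace dimensions via \eqref{tau-invol-bis} and \eqref{dim-Ek+}. For the cokernel identification, the paper takes exactly what you call the ``cleaner alternative'': it observes that $D$ is essentially selfadjoint on $\L^2(\Omega^\bullet(M))$ and that $(\cal{D}_{p,+})^*=\cal{D}_{p^*,-}$, so that \eqref{index-T-with-adjoint} reduces the index directly to $\dim\ker\cal{D}_{p,+}-\dim\ker\cal{D}_{p,-}$ computed from \eqref{Ker-D+} and \eqref{ker-D-}, sidestepping the separate $\tau$-equivariant $\L^p$-Hodge decomposition analysis you sketch as your primary route.
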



\begin{proof}
By Theorem \ref{thm:signature-characteristic-Lp}, the operator $\cal{D}_{p,+} \co \W^{1,p}(\Omega^+(M)) \to \L^p(\Omega^-(M))$ is Fredholm. Therefore,
\begin{equation}
\label{eq:index-signature-coker}
\Index \cal{D}_{p,+}
\ov{\eqref{Fredholm-Index}}{=}
\dim \ker \cal{D}_{p,+}
-
\dim \coker \cal{D}_{p,+}.
\end{equation}
By \eqref{Ker-D+}, we have $
\ker \cal{D}_{p,+}
=
E_+^0 \oplus E_+^1 \oplus \cdots \oplus E_+^{2j-1} \oplus \cal{H}^{2j}_+(M)$. Moreover, by Proposition \ref{prop-coker-signature}, we have $\coker \cal{D}_{p,+}
\cong
E_-^0 \oplus E_-^1 \oplus \cdots \oplus E_-^{2j-1} \oplus \cal{H}^{2j}_-(M)$. Substituting these two descriptions into \eqref{eq:index-signature-coker}, we obtain
\begin{align*}
\MoveEqLeft
\Index \cal{D}_{p,+} 
=\dim E_+^0 + \dim E_+^1 + \cdots + \dim E_+^{2j-1} + \dim \cal{H}^{2j}_+(M) \\
&\quad
-\dim E_-^0 - \dim E_-^1 - \cdots - \dim E_-^{2j-1} - \dim \cal{H}^{2j}_-(M).
\end{align*}
Using \eqref{dim-Ek+}, we get the equality $\dim E_+^k = \dim E_-^k$ for any integer $k \in \{0,\dots,2j-1\}$. Hence all the terms corresponding to the spaces $E_\pm^k$ cancel, and we are left with
\[
\Index \cal{D}_{p,+}
=
\dim \cal{H}^{2j}_+(M) - \dim \cal{H}^{2j}_-(M)
\ov{\eqref{signature}}{=} \sign(M).
\]
\end{proof}

\section{Appendix: structure of operators anticommuting with a bounded symmetry}
\label{Appendix}

The following elementary result describes the structure of operators that anticommute with a bounded symmetry.

\begin{prop}
\label{prop-anticommuting-symmetry-unbounded}
Let $X$ be a Banach space and let $\gamma \co X \to X$ be a bounded involution, i.e.~$\gamma^2 = \Id_X$. Define the bounded projections $
P \ov{\mathrm{def}}{=} \frac{\Id_X + \gamma}{2}$ and $Q \ov{\mathrm{def}}{=} \frac{\Id_X - \gamma}{2}$. 
Set $X_+ \ov{\mathrm{def}}{=} \Ran P = \{x \in X : \gamma(x) = x\}$ and $
X_- \ov{\mathrm{def}}{=} \Ran Q = \{x \in X : \gamma(x) = -x\}$. Then $X = X_+ \oplus X_-$. Let $T$ be an unbounded closed linear operator on $X$ with dense domain $\dom T$. Assume that $\gamma(\dom T) \subset \dom T$ and that $T\gamma x = - \gamma Tx$ for any $x \in \dom T$. Then
\[
T(\dom T \cap X_+) \subset X_-,
\qquad
T(\dom T \cap X_-) \subset X_+.
\]
If we define the operators
\[
T_+ \ov{\mathrm{def}}{=} T\big|_{\dom T \cap X_+} \co \dom T \cap X_+ \to X_-,
\qquad
T_- \ov{\mathrm{def}}{=} T\big|_{\dom T \cap X_-} \co \dom T \cap X_- \to X_+,
\]
then for every $x = x_+ + x_- \in \dom T$ with $x_\pm \in \dom T \cap X_\pm$ we have
\[
Tx 
= T_+ x_+ + T_- x_-,
\]
so that, relative to the decomposition $X = X_+ \oplus X_-$, the operator $T$ has off-diagonal form $
T
=
\begin{bmatrix}
0 & T_-\\
T_+ & 0
\end{bmatrix}$ 
in the previous sense.
\end{prop}

\begin{proof}
Since $\gamma^2 = \Id_X$, a direct computation \cite[Exercise 18 p.~22]{Car05} shows that $P$ and $Q$ are bounded projections on $X$ with
\[
P^2 = P, \quad Q^2 = Q, \quad PQ = QP = 0, \quad P + Q = \Id_X.
\]
Moreover $X_+ = \Ran P$ and $X_- = \Ran Q$, and any $x \in X$ decomposes uniquely as
\[
x = Px + Qx,
\qquad
Px \in X_+,\ Qx \in X_-.
\]
Thus $X = X_+ \oplus X_-$. Since $\gamma(\dom T) \subset \dom T$ and $\dom T$ is a linear subspace, it follows that
\[
P(\dom T)
=\tfrac12(\Id_X + \gamma)(\dom T) \subset \dom T,
\qquad
Q(\dom T) 
=\tfrac12(\Id_X - \gamma)(\dom T)
\subset \dom T.
\]
Hence for every $x \in \dom T$ we have $
x = Px + Qx$ with $Px, Qx \in \dom T$, $Px \in X_+$ and $Qx \in X_-$. It follows that
\[
\dom T
=
(\dom T \cap X_+)
+
(\dom T \cap X_-).
\]
The sum is direct: if $x \in \dom T \cap X_+ \cap X_-$ then $\gamma x = x$ and $\gamma x = -x$, hence $x=0$. Thus
\begin{equation}
\label{decompo-directe}
\dom T
=
(\dom T \cap X_+)
\oplus
(\dom T \cap X_-).
\end{equation}
Assume now that $T \gamma x = -\gamma Tx$ for all $x \in \dom T$. Let $x_+ \in \dom T \cap X_+$. Then $\gamma x_+ = x_+$. Applying the relation to $x_+$ yields
\[
T\gamma x_+ = -\gamma Tx_+.
\]
Again, $T \gamma x_+ = Tx_+$, so $
Tx_+ = -\gamma Tx_+$, 
or equivalently $\gamma Tx_+ = -Tx_+$. Thus $T x_+$ is a $(-1)$-eigenvector of $\gamma$, and therefore $Tx_+ \in X_-$. Hence 
$T(\dom T \cap X_+) \subset X_-$. 
Similarly, if $x_- \in \dom T \cap X_-$ then $\gamma x_- = -x_-$. Applying the relation to $x_-$ gives $
T\gamma x_- = -\gamma Tx_-$. The left-hand side is $T \gamma x_- = T(-x_-) = -Tx_-$, so $
- Tx_- = -\gamma T x_-$. Hence $\gamma Tx_- = Tx_-$. Thus $Tx_-$ is a $(+1)$-eigenvector of $\gamma$. So $Tx_- \in X_+$. Hence $T(\dom T \cap X_-) \subset X_+$. Now, we introduce the operators
\[
T_- 
\ov{\mathrm{def}}{=} T\big|_{\dom T \cap X_-} \co \dom T \cap X_- \to X_+
\quad \text{and} \quad
T_+ 
\ov{\mathrm{def}}{=} T\big|_{\dom T \cap X_+} \co \dom T \cap X_+ \to X_-.
\]
The closedness of $T$ implies that $T_+$ and $T_-$ are closed operators by \cite[Proposition A.2.8 p.~275]{Haa06} between the corresponding Banach spaces. For any $x \in \dom T$, write with \eqref{decompo-directe} the decomposition $x = x_+ + x_-$ with $x_\pm \in \dom T \cap X_\pm$. Then we have
\[
Tx 
= T x_+ + T x_- 
= T_+ x_+ + T_- x_-,
\]
which is the claimed off-diagonal form relative to the decomposition $X = X_+ \oplus X_-$.
\end{proof}


\paragraph{Declaration of interest} None.

\paragraph{Competing interests} The author declares that he has no competing interests. 

\paragraph{Data availability} No data sets were generated during this study.

\paragraph{Acknowledgments} The author would like to thank Jan van Neerven and Dmitriy Zanin for short discussions.


\small

{\footnotesize

\vspace{0.2cm}

\noindent C\'edric Arhancet\\ 
\noindent 6 rue Didier Daurat, 81000 Albi, France\\
URL: \href{https://sites.google.com/site/cedricarhancet}{https://sites.google.com/site/cedricarhancet}\\
cedric.arhancet@protonmail.com\\
ORCID: 0000-0002-5179-6972 
}

\end{document}